\tikzstyle{fadenode}=[circle,fill=lightgray,path fading=fade right,text=black,minimum width=10pt,draw=black]
\pgfplotsset{width=9cm,compat=1.5.1}
\newtheorem{theorem}{Theorem}
\newtheorem{lemma}[theorem]{Lemma}
\newtheorem{corollary}[theorem]{Corollary}
\newtheorem{proposition}[theorem]{Proposition}
\newcommand{\rank}{\operatorname{Trank}}
\newcommand{\Wrank}{\operatorname{Wrank}}
\newcommand{\PP}{\operatorname{PP}}
\newcommand{\OPP}{\operatorname{OPP}}
\newcommand{\ACPP}{\operatorname{ACPP}}
\newcommand{\sgn}{\operatorname{sgn}}
\renewcommand{\det}{\operatorname{det}}
\newcommand{\per}{\operatorname{per}}
\DeclareRobustCommand{\stirling}{\genfrac\{\}{0pt}{}}
\newcommand\range{\mathop{\rm range}\nolimits}
\newcommand{\defeq}{:=}
\def\C{\mathbb{C}}
\def\R{\mathbb{R}} 
\def\F{\mathbb{F}}
\def\detnF{\det^n_{\F}}
\def\pernF{\per^n_{\F}}
\def\e{\mathbf{e}}
\def\0{\mathbf{0}}
\definecolor{green1}{rgb}{0.2,0.95,0.475}
\definecolor{green2}{rgb}{0.2,0.8,0.4}
\definecolor{green3}{rgb}{0.2,0.7,0.35}
\begin{document}

\title{A New Formula for the Determinant and \\ Bounds on Its Tensor and Waring Ranks}  

\author{
	Robin Houston, \ \ \ Adam P.~Goucher, \ \ \ and \ \ \ Nathaniel Johnston\footnote{Department of Mathematics \& Computer Science, Mount Allison University, Sackville, NB, Canada E4L 1E4}
}

\date{May 27, 2024}

\maketitle

\begin{abstract}
    We present a new explicit formula for the determinant that contains superexponentially fewer terms than the usual Leibniz formula. As an immediate corollary of our formula, we show that the tensor rank of the $n \times n$ determinant tensor is no larger than the $n$-th Bell number, which is much smaller than the previously best known upper bounds when $n \geq 4$. Over fields of non-zero characteristic we obtain even tighter upper bounds, and we also slightly improve the known lower bounds. In particular, we show that the $4 \times 4$ determinant over $\F_2$ has tensor rank exactly equal to $12$. Our results also improve upon the best known upper bound for the Waring rank of the determinant when $n \geq 17$, and lead to a new family of axis-aligned polytopes that tile $\R^n$.
\end{abstract}
  

\section{Introduction}\label{sec:intro}

The determinant and permanent of an $n \times n$ matrix $A$ are defined by
\begin{align}\label{eq:det_per_defn}
    \det(A) = \sum_{\sigma \in S_n} \left(\sgn(\sigma)\prod_{i=1}^n a_{i,\sigma(i)}\right) \quad \text{and} \quad \per(A) = \sum_{\sigma \in S_n} \left(\prod_{i=1}^n a_{i,\sigma(i)}\right),
\end{align}
respectively, where $S_n$ is the symmetric group over the set $[n] = \{1,2,\ldots,n\}$. There are numerous other explicit formulas for the permanent of a matrix, such as Ryser's formula \cite{BR91}
\begin{align}\label{eq:perm_ryser}
    \per(A) & = \sum_{S \subseteq[n]}\left(\sgn(S)\prod_{i=1}^n \sum_{j \in S} a_{i,j}\right),
\end{align}
where $\sgn(S) = (-1)^{|S|+n}$, as well as Glynn's formula \cite{Gly10}
\begin{align}\label{eq:perm_glynn}
    \per(A) & = \frac{1}{2^{n-1}}\sum_\delta \left(\sgn(\delta)\prod_{i=1}^n\sum_{j=1}^n\delta_ia_{i,j} \right),
\end{align}
where $\sgn(\delta) = \prod_{k=1}^n \delta_k$ and the outer sum ranges over all vectors $\delta \in \{-1,1\}^n$ with $\delta_1 = 1$.

These alternative formulas for the permanent, despite looking more complicated than the defining formula of Equation~\eqref{eq:det_per_defn}, have a very similar form: they each consist of a sum of products of $n$ factors, with each factor in the product being a linear combination of entries from a single row of $A$. One of the advantages of the formulae in Equations~\eqref{eq:perm_ryser} and~\eqref{eq:perm_glynn} is that there are fewer terms in the outer sum ($2^n - 1$ and $2^{n-1}$, respectively, instead of the $n!$ terms of the defining Equation~\eqref{eq:det_per_defn}), so they can be implemented via fewer multiplications.

On the other hand, while there are numerous known methods of computing the determinant of a matrix, most of them do not provide an explicit formula with fewer terms being summed than the defining formula of Equation~\eqref{eq:det_per_defn}. For example, cofactor expansions are just factored forms of Equation~\eqref{eq:det_per_defn} that still consist of a sum of $n!$ terms, and most numerical methods (e.g., those based on Gaussian elimination or matrix decompositions) are iterative (see \cite{Rot01}, for example) and/or require division by entries of the matrix. The only progress in the direction of finding more efficient explicit formulas for the determinant that we are aware of comes from the fact that, when $n = 3$, there are several ways that are known to write the determinant as a sum of just $5$ terms instead of $3! = 6$ (see \cite{Der15,IT16,KM21}, and the references therein, for example), such as
\begin{align}\begin{split}\label{eq:det_3x3}
    \det(A) & = (a_{1,2} + a_{1,3})(a_{2,1} + a_{2,3})(a_{3,1} + a_{3,2}) \\
    & \quad - a_{1,2}a_{2,1}(a_{3,1} + a_{3,2} + a_{3,3}) \\
    & \quad - a_{1,3}(a_{2,1} + a_{2,2} + a_{2,3})a_{3,1} \\
    & \quad - (a_{1,1} + a_{1,2} + a_{1,3})a_{2,3}a_{3,2} \\
    & \quad + a_{1,1}a_{2,2}a_{3,3}.
\end{split}\end{align}

Derksen noticed that, when combined with cofactor expansions, formulas like Equation~\eqref{eq:det_3x3} generalize to give explicit formulas for the $n \times n$ determinant that consist of a sum of $(5/6)^{\lfloor n/3\rfloor}n!$ terms \cite{Der15}. This formula has the fewest terms in the sum known until now, except over fields of characteristic $2$, where $\det(A) = \per(A)$ (since $-1 = 1$) and Ryser's formula of Equation~\eqref{eq:perm_ryser} is a sum of just $2^n - 1$ terms.\footnote{Glynn's $2^{n-1}$-term formula of Equation~\eqref{eq:perm_glynn} does not apply in this setting, since we cannot divide by $2$ in characteristic $2$.}

Our main contribution is to present a new explicit formula for the determinant (Theorem~\ref{thm:main_formula}) that improves upon both of these bounds. Our formula reduces to exactly the $5$-term formula of Equation~\eqref{eq:det_3x3} when $n = 3$, and in general it consists of a sum of exactly $B_n$ terms, where $B_n$ denotes the $n$-th Bell number (i.e., the number of partitions of $[n]$). Since
\[
    \frac{B_n}{(5/6)^{\lfloor n/3\rfloor}n!} \leq \frac{(4n/5)^n}{(\ln(n+1))^n(5/6)^{\lfloor n/3\rfloor}n!} \leq \frac{n^n}{(\ln(n+1))^n n!} \leq \frac{1}{e}\left(\frac{e}{\ln(n+1)}\right)^n,
\]
for all $n \geq 1$ (the first inequality above uses the bound $B_n \leq (4n/5)^n/(\ln(n+1))^n$ from \cite{BT10}), our formula has superexponentially fewer terms than the previously best-known formula. When working over fields of non-zero characteristic, our formula simplifies even further (Corollary~\ref{cor:tensor_rank_field}), to the point of giving a $(2^n - n)$-term formula when the characteristic is $2$ (Corollary~\ref{cor:tensor_rank_permanent_char2}), narrowly surpassing Ryser's $(2^n - 1$)-term formula.

\subsection{Tensor Rank}\label{sec:tensor}

We can regard the $n \times n$ determinant over a field $\F$ as a tensor living in $(\F^n)^{\otimes n}$, and we can then ask questions of it like we ask of any tensor. In particular, we can ask what its \emph{tensor rank} is \cite{Lan12}. That is, if we use $\detnF \in (\F^n)^{\otimes n}$ to denote the $n \times n$ determinant tensor over the field $\F$, what is the least integer $r$ for which there exist $\{\mathbf{v_{j,k}}\} \subset \F^n$ with
\begin{align}\label{eq:det_tensor_sum}
    \detnF = \sum_{k=1}^r \mathbf{v_{1,k}} \otimes \mathbf{v_{2,k}} \otimes \cdots \otimes \mathbf{v_{n,k}}?
\end{align}
We denote tensor rank (i.e., minimal $r$) by $\rank$, so the tensor rank of the determinant is denoted by $\rank(\detnF)$.

Our interest in the tensor rank comes from the fact that every determinant formula present in this paper corresponds to a tensor decomposition of the form of Equation~\eqref{eq:det_tensor_sum} by replacing each occurrence of $a_{i,j}$ in the formula by $\mathbf{e_j}$ (the $j$-th standard basis vector of $\F^n$) in the $i$-th tensor factor of the tensor decomposition. For example, the defining formula~\eqref{eq:det_per_defn} corresponds to the tensor decomposition
\[
    \detnF = \sum_{\sigma \in S_n} \sgn(\sigma) \mathbf{e}_{\sigma(1)} \otimes \mathbf{e}_{\sigma(2)} \otimes \cdots \otimes \mathbf{e}_{\sigma(n)},
\]
which shows that $\rank(\detnF) \leq |S_n| = n!$. Similarly, the formula for the $3 \times 3$ determinant from Equation~\eqref{eq:det_3x3} immediately gives us the following tensor decomposition of $\det^3_{\F}$, which demonstrates that $\rank(\det^3_{\F}) \leq 5$:
\begin{align*}
    \det^3_{\F} & = (\mathbf{e_2} + \mathbf{e_3}) \otimes (\mathbf{e_1} + \mathbf{e_3}) \otimes (\mathbf{e_1} + \mathbf{e_2}) \\
    & \quad - \mathbf{e_2} \otimes \mathbf{e_1} \otimes (\mathbf{e_1} + \mathbf{e_2} + \mathbf{e_3}) \\
    & \quad - \mathbf{e_3} \otimes (\mathbf{e_1} + \mathbf{e_2} + \mathbf{e_3}) \otimes \mathbf{e_1} \\
    & \quad - (\mathbf{e_1} + \mathbf{e_2} + \mathbf{e_3}) \otimes \mathbf{e_3} \otimes \mathbf{e_2} \\
    & \quad + \mathbf{e_1} \otimes \mathbf{e_2} \otimes \mathbf{e_3}.
\end{align*}

In fact, it is known that $\rank(\det^3_{\F}) = 5$ over every field \cite{KM21}, so this decomposition is optimal. When $n \geq 4$, the exact value of $\rank(\detnF)$ is not known, and until now the best known upper bounds on it were exactly the bounds that we discussed earlier:\footnote{It was mentioned in \cite{KM21} that $\rank(\det^5_{\F}) \leq 20$ and $\rank(\det^7_{\F}) \leq 100$, but these are typographical errors; the authors meant $\rank(\det^4_{\F}) \leq 20$ and $\rank(\det^5_{\F}) \leq 100$, which come from the formula $(5/6)^{\lfloor n/3\rfloor}n!$.}
\begin{align*}
    \rank(\detnF) & \leq (5/6)^{\lfloor n/3\rfloor}n! && \text{for all fields $\F$, and} \\
    \rank(\detnF) & \leq 2^n - 1 && \text{if $\F$ has characteristic $2$.}
\end{align*}
Our formula improves upon these bounds by showing that $\rank(\detnF) \leq B_n$ regardless of the field $\F$ (Corollary~\ref{cor:tensor_rank}), and $\rank(\detnF) \leq 2^n - n$ if $\F$ has characteristic $2$ (Corollary~\ref{cor:tensor_rank_permanent_char2}). These upper bounds are already known to be tight when $n = 3$, and we show that they are also tight when $n = 4$ and the ground field has two elements (i.e., the tensor rank is exactly $2^n - n = 12$ in this case; see Theorem~\ref{thm:exactly_12}). We also obtain some other (tighter than $B_n$) upper bounds when $\F$ has any non-zero characteristic (Corollaries~\ref{cor:tensor_rank_field} and~\ref{cor:tensor_rank_field_simple}).

\subsection{Waring Rank}\label{sec:waring}

Another notion of the rank of the determinant comes from thinking of it as a homogeneous polynomial in the $n^2$ entries of the matrix on which it acts. That is, we can think of the determinant as the following degree-$n$ polynomial in the $n^2$ variables $\{x_{i,j}\}$ (we abuse notation slightly and use $\detnF$ to refer to both this polynomial, as well as the tensor from Equation~\eqref{eq:det_tensor_sum}, but which one we mean will always be clear from context):
\begin{align}\label{eq:det_polynomial}
    \detnF(x_{1,1},x_{1,2},\ldots,x_{n,n}) = \sum_{\sigma \in S_n} \left(\sgn(\sigma)\prod_{i=1}^n x_{i,\sigma(i)}\right).
\end{align}
We are interested in the \emph{Waring rank} of this polynomial. That is, what is the least integer $r$ for which there exist linear forms $\{\ell_k\} \subset \operatorname{Hom}(\mathbb{F}^{n\times n},\mathbb{F})$ and scalars $\{c_k\}\subset \mathbb{F}$ with
\begin{align}\label{eq:det_poly_waring_sum}
    \detnF = \sum_{k=1}^r c_k\ell_k^n?
\end{align}
We denote the Waring rank of this determinant polynomial by $\Wrank(\detnF)$.

For example, if $\mathbb{F}$ has characteristic not equal to $2$ then the Waring rank of the degree-$2$ polynomial $xy$ is $2$, since
\[
    xy = \frac{1}{4}\left((x+y)^2 - (x-y)^2\right),
\]
is a linear combination of $2$ squares of linear forms, and it is not possible to write $xy$ as a linear combination (i.e., scalar multiple) of the square of just a single linear form. Similarly, it is well-known that if the characteristic of $\mathbb{F}$ is $0$ or strictly larger than $n$ then the Waring rank of the $n$-variable polynomial $x_1x_2\cdots x_n$ is exactly $2^{n-1}$ as shown in \cite{RS11}.\footnote{If the characteristic is between $2$ and $n$ (inclusive) then the Waring rank of $x_1x_2\cdots x_n$ is infinite: it cannot be written as a linear combination of \emph{any} number of $n$-th powers of linear forms.} By replacing each term in a formula for the determinant with a linear combination of $2^{n-1}$ different $n$-th powers of linear forms, we immediately get the following (also well-known) simple relationship between the tensor and Waring ranks of the determinant:

\begin{lemma}\label{lem:TrankWrank_bound}
    Let $\mathbb{F}$ be a field with characteristic $p$. If $p = 0$ or $p > n$ then
    \[
        \Wrank(\detnF) \leq 2^{n-1}\cdot\rank(\detnF).
    \]
\end{lemma}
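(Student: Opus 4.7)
The plan is to start with an optimal tensor decomposition of $\detnF$, translate it to a polynomial identity expressing $\detnF$ as a sum of $r = \rank(\detnF)$ products of $n$ linear forms, and then replace each such product by a linear combination of $2^{n-1}$ $n$-th powers of linear forms.

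More concretely, suppose $\detnF = \sum_{k=1}^r \mathbf{v}_{1,k} \otimes \cdots \otimes \mathbf{v}_{n,k}$ is an optimal decomposition. Under the correspondence noted just before Equation~\eqref{eq:det_tensor_sum} (replacing the $i$-th basis vector in the $j$-th tensor slot by the variable $x_{j,i}$), this is the same as a polynomial identity
\[
    \detnF(x_{1,1},\ldots,x_{n,n}) = \sum_{k=1}^r \ell_{1,k}\,\ell_{2,k}\cdots \ell_{n,k},
\]
where $\ell_{i,k}$ is the linear form whose coefficient vector (in the variables $x_{i,1},\ldots,x_{i,n}$ from row $i$) is $\mathbf{v}_{i,k}$. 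The crucial point is that, for each $k$, the linear forms $\ell_{1,k},\ldots,\ell_{n,k}$ involve \emph{disjoint} sets of variables (one row each), so they can be treated as independent indeterminates.

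Next I would invoke the Fischer identity: when $\charc(\F)=0$ or $\charc(\F)>n$, the factor $2^{n-1}n!$ is invertible in $\F$ and one has
\[
    y_1y_2\cdots y_n = \frac{1}{2^{n-1}\,n!}\sum_{\substack{\epsilon\in\{-1,1\}^n\\ \epsilon_1=1}} \left(\prod_{i=1}^n \epsilon_i\right) \left(\sum_{i=1}^n \epsilon_i y_i\right)^{\!n},
\]
a sum of $2^{n-1}$ $n$-th powers of linear forms in $y_1,\ldots,y_n$. Substituting $y_i = \ell_{i,k}$ expresses each of the $r$ summands $\ell_{1,k}\cdots \ell_{n,k}$ as a linear combination of $2^{n-1}$ $n$-th powers of linear forms in the original variables $\{x_{i,j}\}$. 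Combining these across all $k$ yields a representation of $\detnF$ as a linear combination of $2^{n-1}\cdot r$ such $n$-th powers, hence $\Wrank(\detnF) \leq 2^{n-1}\cdot \rank(\detnF)$.

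There is no real obstacle here; the argument is essentially just the standard reduction from tensor decompositions to Waring decompositions. The only thing worth flagging is the hypothesis on the characteristic: it is exactly what makes the Fischer identity available, and without it the monomial $y_1\cdots y_n$ (cf.\ the footnote just before the lemma) need not even possess a finite Waring rank, so some such condition on $\charc(\F)$ is unavoidable in a statement of this form.
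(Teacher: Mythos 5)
Your proposal is correct and is essentially the same argument the paper gives (informally, just before the lemma): translate the tensor decomposition into an expression of $\detnF$ as a sum of $r$ products of $n$ linear forms, then replace each product using the $2^{n-1}$-term Waring decomposition of $y_1\cdots y_n$ (your Fischer identity). One small remark: the disjointness of the variable sets, which you flag as ``crucial,'' is not actually needed for this step — the Fischer identity is a polynomial identity, so substituting \emph{any} linear forms for the $y_i$ yields a valid Waring decomposition — although it does of course hold in this situation.
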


When combined with the bounds on $\rank(\detnF)$ from Section~\ref{sec:tensor}, this lemma tells us that if $\mathbb{F}$ is a field with characteristic $0$ or strictly larger than $n$, then
\begin{align}\label{eq:Wrank_big_bound}
    \Wrank(\detnF) & \leq 2^{n-1}(5/6)^{\lfloor n/3\rfloor}n!.
\end{align}
This upper bound was improved in \cite{JT22}, in the case when $\mathbb{F}$ also contains a primitive root of unity (e.g., if $\mathbb{F} = \mathbb{C}$), to
\begin{align}\label{eq:Wrank_small_bound}
    \Wrank(\detnF) & \leq n \cdot n!.
\end{align}

Our formula improves upon these bounds by showing that $\Wrank(\detnF) \leq 2^{n-1} \cdot B_n$, even without the primitive root of unity assumption (Corollary~\ref{cor:tensor_rank}). Our bound is strictly better (i.e., smaller) than the one provided by Inequality~\eqref{eq:Wrank_big_bound} for all $n \geq 4$, and is better than the one provided by Inequality~\eqref{eq:Wrank_small_bound} for all $n \geq 17$.

\subsection{Arrangement of the Paper}

In Section~\ref{sec:main_formula}, we present our main contribution, which is a new formula for the determinant of a matrix (Theorem~\ref{thm:main_formula}). As an immediate corollary, we obtain our new field-independent upper bounds on the tensor and Waring ranks of the determinant (Corollary~\ref{cor:tensor_rank}).

We present two independent proofs of our formula. First, we present a combinatorial proof in Section~\ref{sec:comb_proof}. This proof has the advantage of being rather mechanical (and thus easy to verify), but the disadvantage of not providing much insight into the formula. Second, we present a geometric proof in Section~\ref{sec:geometric} (this section can be read or skipped quite independently of the rest of the paper). This proof has the advantage of providing some insight into \emph{why} the formula works and how it was actually found, but is less direct: it is established for all matrices $A$ in a small open ball in the space of $n \times n$ real matrices, which is sufficient to prove equality of the determinant polynomial and the polynomial in Theorem~\ref{thm:main_formula}, thereby proving the result in full generality over arbitrary commutative rings. This proof also demonstrates some new axis-aligned polytope tilings of $\mathbb{R}^n$, where the 1-skeleta of the polytopes can be naturally identified with flip graphs for ordered partial partitions.

In Section~\ref{sec:fields_non_zero_char}, we investigate what our formula says about the tensor rank of the determinant over fields with non-zero characteristic, and we obtain tighter upper bounds than the field-independent one (Corollaries~\ref{cor:tensor_rank_field}, \ref{cor:tensor_rank_field_simple}, and~\ref{cor:tensor_rank_permanent_char2}). Finally, in Section~\ref{sec:lower_bound} we (very slightly) improve upon the best known \emph{lower} bound for the tensor rank of the determinant over arbitrary fields (Theorem~\ref{thm:det_rank_lb}), we provide a further improvement for the determinant over finite fields (Theorem~\ref{thm:det_rank_lb_f2}), and we show that the $4 \times 4$ determinant over the field with two elements has tensor rank equal to exactly $12$ (Theorem~\ref{thm:exactly_12}), demonstrating optimality of our formula in this case.

\section{The Formula}\label{sec:main_formula}

Before presenting our formula for the determinant, we first need the concept of a partial partition of $[n]$, which is a set of disjoint subsets (called parts) of $[n]$. If the union of a partial partition is $[n]$ then it is a (non-partial) partition. There is a natural bijective correspondence between partial partitions of $[n]$ with no singleton parts and (non-partial) partitions of $[n]$, which works by erasing all singleton sets from a partition or adding singletons of all members that are missing from a partial partition. For example, when $n = 3$, there are $5$ partial partitions of $[n]$ with no singletons and also $5$ partitions of $[n]$ as follows:
\begin{align}\label{eq:PP3}
    \begin{tabular}{ c c c }
    \underline{Partitions} && \underline{Partial partitions with no singletons} \\ 
    $\{\{1,2,3\}\}$ && $\{\{1,2,3\}\}$ \\
    $\{\{1,2\},\{3\}\}$ && $\{\{1,2\}\}$ \\
    $\{\{1,3\},\{2\}\}$ && $\{\{1,3\}\}$ \\
    $\{\{2,3\},\{1\}\}$ && $\{\{2,3\}\}$ \\
    $\{\{1\},\{2\},\{3\}\}$ && $\{\}$
    \end{tabular}
\end{align}

We denote the set of partial partitions of $[n]$ by $\PP(n)$. Just like (non-partial) partitions of $[n]$ give rise to equivalence relations on $[n]$, partial partitions give rise to \emph{partial} equivalence relations on $[n]$: relations that are symmetric and transitive, but need not be reflexive. We denote the partial equivalence relation induced by the partial partition $P$ by $\underset{P}{\sim}$. In other words, for $i, j \in [n]$, $i \underset{P}{\sim} j$ means that there is a part in $P$ containing both $i$ and $j$. Moreover, for $k \in [n]$, $k \underset{P}{\sim} k$ is not guaranteed, since $k$ might not be in any part of $P$.

With the above preliminaries out of the way, we now present our formula for the determinant, which works over any field:

\begin{theorem}\label{thm:main_formula}
    Let $A$ be an $n \times n$ matrix. Then
    \begin{align}\label{eq:main_formula}
        \det(A) & = \ \smashoperator{\sum_{P \in \PP(n)}} \ \ \sgn(P) |P|! \prod_{i=1}^n \begin{cases}\displaystyle \ \ \sum_{j \underset{P}{\sim} i, j \neq i} a_{i,j} & \textrm{ if $i \underset{P}{\sim} i$;} \\ \displaystyle a_{i,i} + \sum_{j \underset{P}{\sim} j}a_{i,j} & \textrm{ if $i \underset{P}{\not\sim} i$,}\end{cases}
    \end{align}
    where $\displaystyle\sgn(P) = \prod_{S \in P}(-1)^{|S|+1}$.
\end{theorem}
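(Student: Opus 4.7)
The plan is to expand the right-hand side of Equation~\eqref{eq:main_formula} as a polynomial in the entries $\{a_{i,j}\}$ and match monomial coefficients against the Leibniz formula. Write $L_{P,i}$ for the $i$-th factor of the product appearing in Equation~\eqref{eq:main_formula}. Distributing each $\prod_i L_{P,i}$ and swapping sums rewrites the right-hand side as $\sum_{\phi:[n]\to[n]} c_\phi \prod_i a_{i,\phi(i)}$, where $c_\phi = \sum_{P\text{ compatible with }\phi}\sgn(P)\,|P|!$ and a pair $(P,\phi)$ is \emph{compatible} when each $a_{i,\phi(i)}$ appears as a summand of $L_{P,i}$. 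Setting $F=\{i:\phi(i)=i\}$, $N=[n]\setminus F$, and $C=\bigcup_{S\in P}S$, inspection of the two cases of $L_{P,i}$ shows compatibility is equivalent to $C\subseteq N$, $\phi(N)\subseteq C$, and $\phi(i)\in\mathrm{part}_P(i)$ for every $i\in C$. It thus suffices to prove $c_\phi=\sgn(\phi)$ for permutations and $c_\phi=0$ otherwise; and if $\phi$ is not a permutation and $\phi(N)\not\subseteq N$ there are no compatible $P$, so I may assume $\phi|_N$ is a non-surjective self-map of $N$.

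For such $\phi$, pick $y=\min(N\setminus\phi(N))$ and define an involution $\iota$ on compatible pairs that toggles whether $y\in C$: if $y\notin C$, add $y$ to the part $S=\mathrm{part}_P(\phi(y))$ (which exists since $\phi(y)\in C$); if $y\in C$, remove $y$ from $\mathrm{part}_P(y)$. The delicate check, which I expect to be the main obstacle, is that $\iota$ never produces a singleton: if $y\in C$ with $\mathrm{part}_P(y)=\{y,z\}$ of size~$2$, then compatibility at $y$ forces $\phi(y)=z$ and compatibility at $z$ then forces $\phi(z)=y$, contradicting $y\notin\phi(N)$. The remaining compatibility checks on rows inside and outside the affected part are routine. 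Because $\iota$ shifts the size of exactly one part by $\pm 1$ while preserving $|P|$, it flips the sign of $\sgn(P)|P|!$, so the compatible pairs cancel in $\iota$-orbits and $c_\phi=0$.

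For $\phi=\sigma\in S_n$, iterating the condition $\sigma(i)\in\mathrm{part}_P(i)$ around each cycle shows that every non-trivial cycle of $\sigma$ lies entirely in a single part of $P$ and that $C=N$ (if some $i\in N$ lay outside $C$, compatibility would give $\sigma(i)\in C$, then $\sigma^{j+1}(i)\in\mathrm{part}_P(\sigma^j(i))$ for all $j\geq 1$, and cycling around would yield the contradiction $i=\sigma^k(i)\in C$). Hence compatible $P$ correspond bijectively to unordered set partitions $\pi$ of the $c$ non-trivial cycles of $\sigma$, with $\sgn(P)\,|P|!=(-1)^{|N|+|\pi|}|\pi|!$, so
\[
c_\sigma \;=\; (-1)^{|N|}\sum_{\pi\in\Pi(c)}(-1)^{|\pi|}|\pi|! \;=\; (-1)^{|N|}\sum_{k=0}^{c}(-1)^k k!\,\stirling{c}{k} \;=\; (-1)^{|N|-c} \;=\; \sgn(\sigma),
\]
where the penultimate equality uses the identity $\sum_k(-1)^k k!\,\stirling{c}{k}=(-1)^c$, which follows from the exponential generating function computation $\sum_k(-(e^z-1))^k=e^{-z}$.
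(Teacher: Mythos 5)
Your proof is correct and follows essentially the same monomial-matching strategy as the paper's combinatorial proof in Section~\ref{sec:comb_proof}: expand the right-hand side, characterize the partial partitions compatible with each $\phi:[n]\to[n]$, cancel via an involution when $\phi$ is not a permutation, and apply the Stirling-number identity when $\phi$ is a permutation. The only difference is cosmetic: in the non-permutation case you toggle the single element $y=\min(N\setminus\phi(N))$, whereas the paper toggles all elements of $N\setminus\phi(N)$ simultaneously to form equivalence classes of size $2^{n-m}$; your one-element involution is marginally tidier and has the virtue of making the no-singleton check explicit.
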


We note that the quantity $\sgn(P)$ is equal to the sign of a permutation with cycle type $\{|S| : S \in P\}$, which seems like a quite natural notion for the ``sign'' of a partial partition (e.g., the partial partition $\{\{1,2,3\},\{4,5\}\}$ has the same sign as the permutation $(1,2,3)(4,5)$). While the sum described by Theorem~\ref{thm:main_formula} is over all of $P \in \PP(n)$, if $P$ contains a singleton part $\{i\}$ then that term in the sum equals $0$ since
\[
    \sum_{j \underset{P}{\sim} i, j \neq i} a_{i,j}
\]
is an empty sum. The formula~\eqref{eq:main_formula} can thus be rewritten as a sum over the $P \in \PP(n)$ with no singleton parts. For example, if $n = 2$ then there are two partial partitions of $[n]$ with no singletons: $P_1 = \{\{1,2\}\}$ and $P_2 = \{\}$. We can compute $\sgn(P_1) = -1$, $\sgn(P_2) = 1$, $|P_1|! = 1! = 1$ and $|P_2|! = 0! = 1$, so Theorem~\ref{thm:main_formula} says that
\[
    \det(A) = -a_{1,2}a_{2,1} + a_{1,1}a_{2,2},
\]
which is of course the same as the usual formula for the determinant from Equation~\eqref{eq:det_per_defn}. When $n = 3$, the $5$ partial partitions with no singletons from Equation~\eqref{eq:PP3} result in exactly the $5$-term formula for the determinant that we saw in Equation~\eqref{eq:det_3x3}. When $n = 4$, Theorem~\ref{thm:main_formula} gives the following $15$-term formula for the determinant (surpassing the prior state of the art formula, which has $20$ terms):
\begin{align}\begin{split}\label{eq:det_n4_formula}
    \det(A) & = a_{1,1}a_{2,2}a_{3,3}a_{4,4} \\
    & \quad - (a_{1,2}+a_{1,3}+a_{1,4})(a_{2,1}+a_{2,3}+a_{2,4})(a_{3,1}+a_{3,2}+a_{3,4})(a_{4,1}+a_{4,2}+a_{4,3}) \\
    & \quad + (a_{1,1}+a_{1,2}+a_{1,3}+a_{1,4})(a_{2,3}+a_{2,4})(a_{3,2}+a_{3,4})(a_{4,2}+a_{4,3}) \\
    & \quad + (a_{1,3}+a_{1,4})(a_{2,1}+a_{2,2}+a_{2,3}+a_{2,4})(a_{3,1}+a_{3,4})(a_{4,1}+a_{4,3}) \\
    & \quad + (a_{1,2}+a_{1,4})(a_{2,1}+a_{2,4})(a_{3,1}+a_{3,2}+a_{3,3}+a_{3,4})(a_{4,1}+a_{4,2}) \\
    & \quad + (a_{1,2}+a_{1,3})(a_{2,1}+a_{2,3})(a_{3,1}+a_{3,2})(a_{4,1}+a_{4,2}+a_{4,3}+a_{4,4}) \\
    & \quad - a_{1,2}a_{2,1}(a_{3,1}+a_{3,2}+a_{3,3})(a_{4,1}+a_{4,2}+a_{4,4}) \\
    & \quad - a_{1,3}(a_{2,1}+a_{2,2}+a_{2,3})a_{3,1}(a_{4,1}+a_{4,3}+a_{4,4}) \\
    & \quad - a_{1,4}(a_{2,1}+a_{2,2}+a_{2,4})(a_{3,1}+a_{3,3}+a_{3,4})a_{4,1} \\
    & \quad - (a_{1,1}+a_{1,2}+a_{1,3})a_{2,3}a_{3,2}(a_{4,2}+a_{4,3}+a_{4,4}) \\
    & \quad - (a_{1,1}+a_{1,2}+a_{1,4})a_{2,4}(a_{3,2}+a_{3,3}+a_{3,4})a_{4,2} \\
    & \quad - (a_{1,1}+a_{1,3}+a_{1,4})(a_{2,2}+a_{2,3}+a_{2,4})a_{3,4}a_{4,3} \\
    & \quad + 2a_{1,2}a_{2,1}a_{3,4}a_{4,3} \\
    & \quad + 2a_{1,3}a_{2,4}a_{3,1}a_{4,2} \\
    & \quad + 2a_{1,4}a_{2,3}a_{3,2}a_{4,1}.
\end{split}\end{align}

In general, since there are $B_n$ partitions of $[n]$, there are also $B_n$ partial partitions of $[n]$ with no singleton parts, and thus $B_n$ (potentially) non-zero terms in the formula~\eqref{eq:main_formula}. This demonstrates part~(a) of the following corollary (part~(b) then follows from Lemma~\ref{lem:TrankWrank_bound}):

\begin{corollary}\label{cor:tensor_rank}
    Let $\F$ be a field. Then
    \begin{enumerate}
        \item[(a)] $\rank(\det_{\F}^n) \leq B_n$, and
        
        \item[(b)] if $\F$ has characteristic $0$ or strictly larger than $n$ then $\Wrank(\det_{\F}^n) \leq 2^{n-1} \cdot B_n$.
    \end{enumerate}
\end{corollary}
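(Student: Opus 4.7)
Both parts are essentially immediate from the ingredients already established. For part~(a), the plan is to apply the tensor-decomposition translation described in Section~\ref{sec:tensor} directly to the formula of Theorem~\ref{thm:main_formula}: replace each occurrence of $a_{i,j}$ in the $i$-th factor by the standard basis vector $\mathbf{e}_j \in \F^n$. Each summand of Equation~\eqref{eq:main_formula} is a scalar $\sgn(P)|P|!$ times a product of $n$ row-indexed linear combinations of matrix entries, so after the substitution it becomes a scalar multiple of a simple tensor $\mathbf{v}_{1,P} \otimes \cdots \otimes \mathbf{v}_{n,P}$, with the integer scalar absorbed into any one of the $\mathbf{v}_{i,P}$ by multilinearity. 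This exhibits $\detnF$ as a sum of rank-one tensors indexed by $P \in \PP(n)$.

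The second step for (a) is to throw away the summands that are forced to be zero. If $P$ contains a singleton part $\{i\}$, then $i \underset{P}{\sim} i$ but there is no $j \neq i$ with $j \underset{P}{\sim} i$, so the $i$-th factor is the empty sum and the whole summand vanishes. Hence only partial partitions with no singleton parts contribute, and under the bijection recalled in Section~\ref{sec:main_formula} (add the missing singletons to pass to a full partition; delete them to invert), these are in one-to-one correspondence with the partitions of $[n]$. There are $B_n$ of them, so at most $B_n$ rank-one tensors are needed, giving $\rank(\detnF) \leq B_n$.

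Part~(b) is then a direct consequence of Lemma~\ref{lem:TrankWrank_bound}: under the characteristic hypothesis on $\F$, chaining $\Wrank(\detnF) \leq 2^{n-1} \cdot \rank(\detnF)$ with the bound from (a) gives $\Wrank(\detnF) \leq 2^{n-1} B_n$. The whole argument is bookkeeping on top of Theorem~\ref{thm:main_formula}; there is no real obstacle, which is why the corollary is billed as immediate. The only minor points to verify are that the integer coefficient $\sgn(P)|P|!$ always makes sense in $\F$ (it does, as integers embed into any field) and that it can be absorbed into a simple tensor factor (this is trivial multilinearity).
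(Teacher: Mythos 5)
Your proof is correct and matches the paper's approach exactly: convert the $B_n$ non-vanishing terms of Theorem~\ref{thm:main_formula} into rank-one tensors for part~(a), then apply Lemma~\ref{lem:TrankWrank_bound} for part~(b). One small wording quibble: integers do not \emph{embed} into fields of positive characteristic (the canonical ring map $\mathbb{Z}\to\F$ has kernel $p\mathbb{Z}$), but that map is all you need to interpret the coefficient $\sgn(P)\,|P|!$ in $\F$, so the argument stands.
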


\section{Combinatorial Proof}\label{sec:comb_proof}

We now present a combinatorial proof of Theorem~\ref{thm:main_formula}. This proof works by just brute-force showing that the formula~\eqref{eq:main_formula}, when expanded as a linear combination of monomials, gives the exact same quantity as the defining formula~\eqref{eq:det_per_defn}. More precisely, let $f : [n] \rightarrow [n]$ be a function (not necessarily a permutation). Our goal is to show that the coefficient of $a_{1,f(1)}a_{2,f(2)}\cdots a_{n,f(n)}$ is the same in Equation~\eqref{eq:main_formula} as it is in the defining formula for the determinant~\eqref{eq:det_per_defn}.

To this end we say that a partial partition $P \in \PP(n)$ is \emph{algebraically compatible} with $f$ if, for all $i \in [n]$, we have the following two properties:
\begin{itemize}
    \item[($\alpha$)] If $i \underset{P}{\sim} i$ then $f(i) \neq i$ and $f(i) \underset{P}{\sim} i$, and
    
    \item[($\beta$)] If $i \underset{P}{\not\sim} i$ then $f(i) = i$ or $f(i) \underset{P}{\sim} f(i)$.
\end{itemize}

If we let $\ACPP(f)$ denote the set of partial partitions that are algebraically compatible with $f$, then Equation~\eqref{eq:main_formula} says exactly that the coefficient $c_{f}$ of the coefficient of $a_{1,f(1)}a_{2,f(2)}\cdots a_{n,f(n)}$ in an expansion of the determinant is equal to
\begin{align}\label{eq:coeff_formula}
        c_f = \ \ \smashoperator{\sum_{P \in \ACPP(f)}} \ \ \sgn(P)|P|!.
\end{align}
    
\begin{lemma}\label{lem:cf_eq_sgn_f}
    Let $c_{f}$ be the coefficient of $a_{1,f(1)}a_{2,f(2)}\cdots a_{n,f(n)}$ after expanding the polynomial in the right-hand side of Equation~\eqref{eq:main_formula}. Then $c_f = \sgn(f)$ if $f$ is a permutation and $c_f = 0$ otherwise.
\end{lemma}

\begin{proof}
    To prove Lemma~\ref{lem:cf_eq_sgn_f} (and thus Theorem~\ref{thm:main_formula}), we now split into two cases.\medskip
    
    \underline{\textbf{Case 1:} $f$ is not a permutation.}

    As $f$ is not surjective, there exists $i \notin \range(f)$. Let
$j = f(i)$ and observe that $j \neq i$. Take an arbitrary $P \in \ACPP(f)$.
Note that $j \sim j$ and, if $i \sim i$, we have $i \sim j$. If $P'$ is
obtained by removing $i$ from the part of $j$ (if $i \sim i$) or introducing
$i$ into the part of $j$ (otherwise), then $P' \in \ACPP(f)$ and has the
opposite sign to $P$. This defines an involution on $\ACPP(f)$ mapping each
algebraically compatible partial partition to one of opposite sign, so
$c_f = 0$.\medskip
    
    \underline{\textbf{Case 2:} $f$ is a permutation.}
    
    We can write $f$ as a product of disjoint cycles of length at least $2$: $f = \sigma_1 \sigma_2 \cdots \sigma_k$. Each cycle $\sigma = (i_1 \ i_2 \ \ldots \ i_\ell )$ corresponds naturally to a subset $S_{\sigma} := \{i_1,i_2,\ldots,i_\ell\} \subseteq [n]$ (though this correspondence is many-to-one since the order of the entries $\sigma$ matters, whereas it does not matter in $S_{\sigma}$). Similarly, from $f$ we can build the partial partition $P_f := \{S_{\sigma_1}, S_{\sigma_2},\ldots,S_{\sigma_k}\}$.
    
    If $i_1$ and $i_2$ are in the same cycle of $f$ then, for any $P \in \ACPP(f)$ we have $i_1 \underset{P}{\sim} i_2$. It follows that $P \in \ACPP(f)$ if and only if $P = P_f$ or $P$ can be obtained from $P_f$ by unioning together some of its parts. In other words, there exists a (non-partial) partition $K = \{K_1,\ldots,K_m\}$ of $[k]$ such that
    \begin{align}\label{eq:cpp_P_perm}
        P = \left\{ \bigcup_{i \in K_j} S_{\sigma_i} : j \in [m] \right\}.
    \end{align}
    
    If $\stirling{k}{m}$ denotes the $(k,m)$-th Stirling number of the 2nd kind, then there are $\stirling{k}{m}$ partitions of $[k]$ with exactly $m$ parts, so there are $\stirling{k}{m}$ partial partitions $P$ of the form~\eqref{eq:cpp_P_perm} with exactly $m$ parts. Each one has $|P| = m$ and $\sgn(P) = (-1)^{k+m}\sgn(f)$, so Equation~\eqref{eq:coeff_formula} can be written more explicitly as
    \begin{align}\begin{split}\label{eq:cf_expand}
        c_f & = \ \ \smashoperator{\sum_{P \in \ACPP(f)}} \ \ \sgn(P)|P|! \\
        & = \sum_{m = 1}^k (-1)^{k+m}\sgn(f)\stirling{k}{m} m! \\
        & = (-1)^k\sgn(f)\left(\sum_{m = 1}^k \stirling{k}{m}(-1)^{m}m!\right).
    \end{split}\end{align}
    We now plug $x = -1$ into the well-known formula
    \[
        \sum_{m = 1}^k \stirling{k}{m} x(x-1)(x-2)\cdots(x-m+1) = x^k
    \]
    to see that
    \begin{align}\label{eq:stirling_sum_neg1}
        \sum_{m = 1}^k \stirling{k}{m}(-1)^{m}m! = (-1)^k.
    \end{align}
    Substituting Equation~\eqref{eq:stirling_sum_neg1} into the bottom line of Equation~\eqref{eq:cf_expand} shows that $c_f = (-1)^{2k}\sgn(f) = \sgn(f)$, which completes the proof.
\end{proof}

\section{Geometric Interpretation and Proof}\label{sec:geometric}

We now present an alternate proof of Theorem~\ref{thm:main_formula} that perhaps provides a bit more insight into why the formula~\eqref{eq:main_formula} works. We take the base field to be $\mathbb{R}$ throughout this section, but remark that this does not lose any generality: proving Theorem~\ref{thm:main_formula} over $\mathbb{R}$ establishes equality between two polynomials in the ring $\mathbb{Z}[a_{1,1}, a_{1,2}, \dots, a_{n,n}]$, which must therefore also hold over any commutative ring.

Throughout the rest of this section, we use 3D terminology (e.g., ``volume'' and ``parallelepiped'') if the dimension $n$ is unknown or greater than $2$. The $n$ standard basis vectors are denoted $\mathbf{e_1}, \ldots, \mathbf{e_n}$. It is notationally convenient to also define $\mathbf{e_0}$ to be the negated sum of the standard basis vectors, so that $\mathbf{e_0} + \mathbf{e_1} + \cdots + \mathbf{e_n} = 0$.

\subsection{Tilings in General}\label{sec:geo_tilings_general}

We consider the matrix $A$ as defining a lattice in $\mathbb{R}^n$, where the lattice points consist of integer linear combinations of the columns of the matrix. More specifically, this lattice is
\[
    \Lambda_A := \{ A\mathbf{v} : \mathbf{v} \in \mathbb{Z}^n \}.
\]
The usual connection between the determinant and this lattice is the fact that a parallelepiped with side vectors equal to the columns of $A$ tiles $\mathbb{R}^n$ by translates in the lattice $\Lambda_A$ and has signed volume equal to $\det(A)$ (see Figure~\ref{fig:parallelogram_tiling}). We will prove Theorem~\ref{thm:main_formula} by constructing a polytope whose volume is given by the formula in Equation~\eqref{eq:main_formula}, but that also tiles $\mathbb{R}^n$ by translates in the lattice $\Lambda_A$ and thus must also have signed volume equal to $\det(A)$ (see Figure~\ref{fig:our_tiling}).\footnote{This technique has been used in the past to create notched cube tilings of $\mathbb{R}^n$ \cite{Ste90}, for example; our tilings will also be axis-aligned polytopes, but will otherwise be slightly more complicated.}

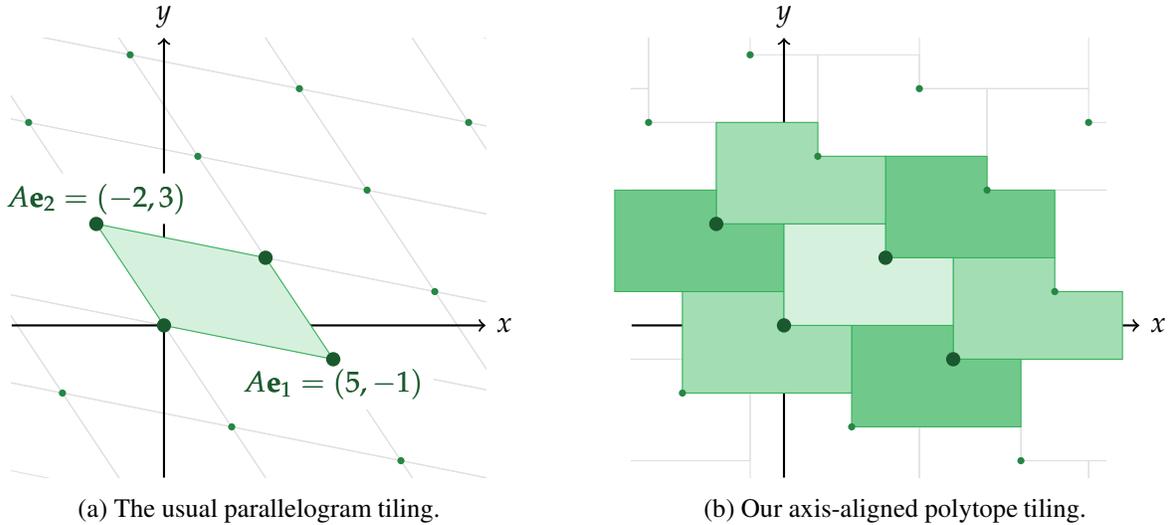
\begin{figure}[!htb]
	\centering
	\begin{subfigure}{.48\textwidth}
		\centering
		\begin{tikzpicture}[scale=0.45]%
    		\coordinate (O) at (0,0);
    		\coordinate (P) at (5,-1);
    		\coordinate (Q) at (-2,3);
    		\coordinate (PQ) at (3,2);
      
            \begin{scope}
                \clip(-4.5,-4.5) rectangle (9.5,8.5);

        		\draw[color=gray!25] (-6,9) -- (-1,8) -- (-3,11) -- (-8,12) -- cycle;
        		\draw[color=gray!25] (-1,8) -- (4,7) -- (2,10) -- (-3,11) -- cycle;
        		\draw[color=gray!25] (4,7) -- (9,6) -- (7,9) -- (2,10) -- cycle;
        		\draw[color=gray!25] (9,6) -- (14,5) -- (12,8) -- (7,9) -- cycle;

        		\draw[color=gray!25] (-9,7) -- (-4,6) -- (-6,9) -- (-11,10) -- cycle;
        		\draw[color=gray!25] (-4,6) -- (1,5) -- (-1,8) -- (-6,9) -- cycle;
        		\draw[color=gray!25] (1,5) -- (6,4) -- (4,7) -- (-1,8) -- cycle;
        		\draw[color=gray!25] (6,4) -- (11,3) -- (9,6) -- (4,7) -- cycle;
        		\draw[color=gray!25] (11,3) -- (16,2) -- (14,5) -- (9,6) -- cycle;
          
        		\draw[color=gray!25] (-7,4) -- (-2,3) -- (-4,6) -- (-9,7) -- cycle;
        		\draw[color=gray!25] (-2,3) -- (3,2) -- (1,5) -- (-4,6) -- cycle;
        		\draw[color=gray!25] (3,2) -- (8,1) -- (6,4) -- (1,5) -- cycle;
        		\draw[color=gray!25] (8,1) -- (13,0) -- (11,3) -- (6,4) -- cycle;
          
        		\draw[color=gray!25] (-5,1) -- (0,0) -- (-2,3) -- (-7,4) -- cycle;
        		\draw[color=gray!25] (0,0) -- (5,-1) -- (3,2) -- (-2,3) -- cycle;
        		\draw[color=gray!25] (5,-1) -- (10,-2) -- (8,1) -- (3,2) -- cycle;
        		\draw[color=gray!25] (10,-2) -- (15,-3) -- (13,0) -- (8,1) -- cycle;
          
        		\draw[color=gray!25] (-8,-1) -- (-3,-2) -- (-5,1) -- (-10,2) -- cycle;
        		\draw[color=gray!25] (-3,-2) -- (2,-3) -- (0,0) -- (-5,1) -- cycle;
        		\draw[color=gray!25] (2,-3) -- (7,-4) -- (5,-1) -- (0,0) -- cycle;
        		\draw[color=gray!25] (7,-4) -- (12,-5) -- (10,-2) -- (5,-1) -- cycle;
          
        		\draw[color=gray!25] (-6,-4) -- (-1,-5) -- (-3,-2) -- (-8,-1) -- cycle;
        		\draw[color=gray!25] (-1,-5) -- (4,-6) -- (2,-3) -- (-3,-2) -- cycle;
        		\draw[color=gray!25] (4,-6) -- (9,-7) -- (7,-4) -- (2,-3) -- cycle;
        		\draw[color=gray!25] (9,-7) -- (14,-8) -- (12,-5) -- (7,-4) -- cycle;
            \end{scope}

    		\draw[thick,-to] (-4.5,0) -- (9.5,0) node[anchor=west]{$x$};
    		\draw[thick,-to] (0,-4.5) -- (0,8.5) node[anchor=south]{$y$};
      
            \filldraw[draw=green3,fill=green3!20!white,fill opacity=0.8] (O) -- (P) -- (PQ) -- (Q) -- cycle;

        	\fill[green3!50!black] (P) circle (1pt) node[anchor=north,fill=white,opacity=0.7,draw=white]{$A\e_1 = (5,-1)$};
        	\fill[green3!50!black] (Q) circle (1pt) node[anchor=south,fill=white,opacity=0.7,draw=white]{$A\e_2 = (-2,3)$};
        	\fill[green3!50!black] (P) circle (6pt) node[anchor=north]{$A\e_1 = (5,-1)$};
        	\fill[green3!50!black] (Q) circle (6pt) node[anchor=south]{$A\e_2 = (-2,3)$};
        	\fill[green3!50!black] (O) circle (6pt);
        	\fill[green3!50!black] (PQ) circle (6pt);

        	\fill[green3!75!black] (-4,6) circle (3pt);
        	\fill[green3!75!black] (-1,8) circle (3pt);
        	\fill[green3!75!black] (-3,-2) circle (3pt);
        	\fill[green3!75!black] (1,5) circle (3pt);
        	\fill[green3!75!black] (2,-3) circle (3pt);
        	\fill[green3!75!black] (4,7) circle (3pt);
        	\fill[green3!75!black] (6,4) circle (3pt);
        	\fill[green3!75!black] (7,-4) circle (3pt);
        	\fill[green3!75!black] (8,1) circle (3pt);
        	\fill[green3!75!black] (9,6) circle (3pt);
    	\end{tikzpicture}
		\caption{The usual parallelogram tiling.}\label{fig:parallelogram_tiling}
	\end{subfigure} \hfill %
	\begin{subfigure}{.48\textwidth}
		\centering
	\begin{tikzpicture}[scale=0.45]%
    		\coordinate (O) at (0,0);
    		\coordinate (P) at (5,-1);
    		\coordinate (Q) at (-2,3);
    		\coordinate (PQ) at (3,2);
      
            \begin{scope}
                \clip(-4.5,-4.5) rectangle (9.5,8.5);

        		\draw[color=gray!25] (-6,-4) -- (-1,-4) -- (-1,-2) -- (-3,-2) -- (-3,-1) -- (-6,-1) -- cycle;
        		\draw[color=gray!25] (-8,-1) -- (-3,-1) -- (-3,1) -- (-5,1) -- (-5,2) -- (-8,2) -- cycle;
        		\draw[color=gray!25] (-10,2) -- (-5,2) -- (-5,4) -- (-7,4) -- (-7,5) -- (-10,5) -- cycle;

        		\draw[color=gray!25] (-1,-5) -- (4,-5) -- (4,-3) -- (2,-3) -- (2,-2) -- (-1,-2) -- cycle;
        		\draw[color=gray!25] (-7,4) -- (-2,4) -- (-2,6) -- (-4,6) -- (-4,7) -- (-7,7) -- cycle;
        		\draw[color=gray!25] (-9,7) -- (-4,7) -- (-4,9) -- (-6,9) -- (-6,10) -- (-9,10) -- cycle;

        		\draw[color=gray!25] (4,-6) -- (9,-6) -- (9,-4) -- (7,-4) -- (7,-3) -- (4,-3) -- cycle;
        		\draw[color=gray!25] (-4,6) -- (1,6) -- (1,8) -- (-1,8) -- (-1,9) -- (-4,9) -- cycle;

        		\draw[color=gray!25] (9,-7) -- (14,-7) -- (14,-5) -- (12,-5) -- (12,-4) -- (9,-4) -- cycle;
        		\draw[color=gray!25] (7,-4) -- (12,-4) -- (12,-2) -- (10,-2) -- (10,-1) -- (7,-1) -- cycle;
        		\draw[color=gray!25] (1,5) -- (6,5) -- (6,7) -- (4,7) -- (4,8) -- (1,8) -- cycle;

        		\draw[color=gray!25] (10,-2) -- (15,-2) -- (15,0) -- (13,0) -- (13,1) -- (10,1) -- cycle;
        		\draw[color=gray!25] (8,1) -- (13,1) -- (13,3) -- (11,3) -- (11,4) -- (8,4) -- cycle;
        		\draw[color=gray!25] (6,4) -- (11,4) -- (11,6) -- (9,6) -- (9,7) -- (6,7) -- cycle;
        		\draw[color=gray!25] (4,7) -- (9,7) -- (9,9) -- (7,9) -- (7,10) -- (4,10) -- cycle;
            \end{scope}

    		\draw[thick,-to] (-4.5,0) -- (10.5,0) node[anchor=west]{$x$};
    		\draw[thick,-to] (0,-4.5) -- (0,8.5) node[anchor=south]{$y$};
      
            \filldraw[draw=green3,fill=green3!45!white,fill opacity=0.8] (5,-1) -- (10,-1) -- (10,1) -- (8,1) -- (8,2) -- (5,2) -- cycle;
            \filldraw[draw=green3,fill=green3!70!white,fill opacity=0.8] (3,2) -- (8,2) -- (8,4) -- (6,4) -- (6,5) -- (3,5) -- cycle;
            \filldraw[draw=green3,fill=green3!70!white,fill opacity=0.8] (2,-3) -- (7,-3) -- (7,-1) -- (5,-1) -- (5,0) -- (2,0) -- cycle;
            \filldraw[draw=green3,fill=green3!45!white,fill opacity=0.8] (-2,3) -- (3,3) -- (3,5) -- (1,5) -- (1,6) -- (-2,6) -- cycle;
            \filldraw[draw=green3,fill=green3!45!white,fill opacity=0.8] (-3,-2) -- (2,-2) -- (2,0) -- (0,0) -- (0,1) -- (-3,1) -- cycle;
            \filldraw[draw=green3,fill=green3!70!white,fill opacity=0.8] (-5,1) -- (0,1) -- (0,3) -- (-2,3) -- (-2,4) -- (-5,4) -- cycle;
            \filldraw[draw=green3,fill=green3!20!white,fill opacity=0.8] (O) -- (5,0) -- (5,2) -- (3,2) -- (3,3) -- (0,3) -- cycle;

        	\fill[green3!50!black] (P) circle (6pt);
        	\fill[green3!50!black] (Q) circle (6pt);
        	\fill[green3!50!black] (O) circle (6pt);
        	\fill[green3!50!black] (PQ) circle (6pt);

        	\fill[green3!75!black] (-4,6) circle (3pt);
        	\fill[green3!75!black] (-1,8) circle (3pt);
        	\fill[green3!75!black] (-3,-2) circle (3pt);
        	\fill[green3!75!black] (1,5) circle (3pt);
        	\fill[green3!75!black] (2,-3) circle (3pt);
        	\fill[green3!75!black] (4,7) circle (3pt);
        	\fill[green3!75!black] (6,4) circle (3pt);
        	\fill[green3!75!black] (7,-4) circle (3pt);
        	\fill[green3!75!black] (8,1) circle (3pt);
        	\fill[green3!75!black] (9,6) circle (3pt);
    	\end{tikzpicture}
		\caption{Our axis-aligned polytope tiling.}\label{fig:our_tiling}
	\end{subfigure}
	
	\caption{Two tilings of $\mathbb{R}^2$ on the same lattice in which the tiles have area equal to $\det\left(\left[\begin{smallmatrix}5 & -2 \\ -1 & 3\end{smallmatrix}\right]\right) = 13$.}\label{fig:tiling_gives_det}
\end{figure}

For now, instead of placing the usual parallelepiped at each lattice point, we place a cuboid with dimensions $a_{1,1} \times a_{2,2} \times \cdots \times a_{n,n}$, extending in the positive direction. That is, we define the cuboid $C_A$ to be the Cartesian product of closed intervals of lengths given by the diagonal entries of $A$:
\[
    C_A := \prod_{i=1}^n [0, a_{i,i}],
\]
and we consider the set of cuboids $\{C_A + \mathbf{z} : \mathbf{z} \in \Lambda_A\}$. Depending on the values of the off-diagonal entries of $A$, these cuboids may overlap and/or there may be gaps between them (see Figure~\ref{fig:tile_gaps}).

\begin{figure}[!htb]
	\centering
	\begin{subfigure}{.48\textwidth}
		\centering
		\begin{tikzpicture}[scale=0.45]%
    		\coordinate (O) at (0,0);
    		\coordinate (P) at (5,-4);
    		\coordinate (PQ) at (7,-1);
    		\coordinate (Q) at (2,3);
      
            \begin{scope}
                \clip(-4.5,-4.5) rectangle (9.5,8.5);

        		\draw[color=gray!25] (-11,-5) rectangle (-6,-2);
        		\draw[color=gray!25] (-9,-2) rectangle (-4,1);
        		\draw[color=gray!25] (-7,1) rectangle (-2,4);
        		\draw[color=gray!25] (-5,4) rectangle (0,7);
        		\draw[color=gray!25] (-3,7) rectangle (2,10);

        		\draw[color=gray!25] (-4,-6) rectangle (1,-3);
        		\draw[color=gray!25] (-2,-3) rectangle (3,0);
        		\draw[color=gray!25] (2,3) rectangle (7,6);
        		\draw[color=gray!25] (4,6) rectangle (9,9);

        		\draw[color=gray!25] (3,-7) rectangle (8,-4);
        		\draw[color=gray!25] (5,-4) rectangle (10,-1);
        		\draw[color=gray!25] (7,-1) rectangle (12,2);
        		\draw[color=gray!25] (9,2) rectangle (14,5);
        		\draw[color=gray!25] (11,5) rectangle (16,8);
            \end{scope}

    		\draw[thick,-to] (-4.5,0) -- (9.5,0) node[anchor=west]{$x$};
    		\draw[thick,-to] (0,-4.5) -- (0,8.5) node[anchor=south]{$y$};
      
            \filldraw[draw=green3,fill=green3!20!white,fill opacity=0.8] (0,0) rectangle (5,3);

        	\fill[green3!50!black] (P) circle (1pt) node[anchor=east,fill=white,opacity=0.7,draw=white,shift={(-0.2,0)}]{$A\e_1 = (5,-4)$};
        	\fill[green3!50!black] (Q) circle (1pt) node[anchor=south,fill=white,opacity=0.7,draw=white,shift={(0.3,0.05)}]{$A\e_2 = (2,3)$};
        	\fill[green3!50!black] (P) circle (6pt) node[anchor=east,shift={(-0.2,0)}]{$A\e_1 = (5,-4)$};
        	\fill[green3!50!black] (Q) circle (6pt) node[anchor=south,shift={(0.3,0.05)}]{$A\e_2 = (2,3)$};
        	\fill[green3!50!black] (P) circle (6pt);
        	\fill[green3!50!black] (Q) circle (6pt);
        	\fill[green3!50!black] (O) circle (6pt);
        	\fill[green3!50!black] (PQ) circle (6pt);

        	\fill[green3!75!black] (-3,7) circle (3pt);
        	\fill[green3!75!black] (-2,-3) circle (3pt);
        	\fill[green3!75!black] (4,6) circle (3pt);
        	\fill[green3!75!black] (9,2) circle (3pt);
    	\end{tikzpicture}
		\caption{A rectangular packing (with gaps) of $\mathbb{R}^2$ corresponding the matrix $\left[\begin{smallmatrix}5 & 2 \\ -4 & 3\end{smallmatrix}\right]$.}\label{fig:tile_gaps1}
	\end{subfigure} \hfill %
	\begin{subfigure}{.48\textwidth}
		\centering
		\begin{tikzpicture}[scale=0.45]%
    		\coordinate (O) at (0,0);
    		\coordinate (P) at (5,-1);
    		\coordinate (Q) at (-2,3);
    		\coordinate (PQ) at (3,2);
      
            \begin{scope}
                \clip(-4.5,-4.5) rectangle (9.5,8.5);

        		\draw[color=gray!25] (-6,-4) -- (-1,-4) -- (-1,-2) -- (-3,-2) -- (-3,-1) -- (-6,-1) -- cycle;
        		\draw[color=gray!25] (-8,-1) -- (-3,-1) -- (-3,1) -- (-5,1) -- (-5,2) -- (-8,2) -- cycle;
        		\draw[color=gray!25] (-10,2) -- (-5,2) -- (-5,4) -- (-7,4) -- (-7,5) -- (-10,5) -- cycle;

        		\draw[color=gray!25] (-1,-5) -- (4,-5) -- (4,-3) -- (2,-3) -- (2,-2) -- (-1,-2) -- cycle;
        		\draw[color=gray!25] (-3,-2) -- (2,-2) -- (2,0) -- (0,0) -- (0,1) -- (-3,1) -- cycle;
        		\draw[color=gray!25] (-5,1) -- (0,1) -- (0,3) -- (-2,3) -- (-2,4) -- (-5,4) -- cycle;
        		\draw[color=gray!25] (-7,4) -- (-2,4) -- (-2,6) -- (-4,6) -- (-4,7) -- (-7,7) -- cycle;
        		\draw[color=gray!25] (-9,7) -- (-4,7) -- (-4,9) -- (-6,9) -- (-6,10) -- (-9,10) -- cycle;

        		\draw[color=gray!25] (4,-6) -- (9,-6) -- (9,-4) -- (7,-4) -- (7,-3) -- (4,-3) -- cycle;
        		\draw[color=gray!25] (2,-3) -- (7,-3) -- (7,-1) -- (5,-1) -- (5,0) -- (2,0) -- cycle;
        		\draw[color=gray!25] (-2,3) -- (3,3) -- (3,5) -- (1,5) -- (1,6) -- (-2,6) -- cycle;
        		\draw[color=gray!25] (-4,6) -- (1,6) -- (1,8) -- (-1,8) -- (-1,9) -- (-4,9) -- cycle;

        		\draw[color=gray!25] (9,-7) -- (14,-7) -- (14,-5) -- (12,-5) -- (12,-4) -- (9,-4) -- cycle;
        		\draw[color=gray!25] (7,-4) -- (12,-4) -- (12,-2) -- (10,-2) -- (10,-1) -- (7,-1) -- cycle;
        		\draw[color=gray!25] (5,-1) -- (10,-1) -- (10,1) -- (8,1) -- (8,2) -- (5,2) -- cycle;
        		\draw[color=gray!25] (1,5) -- (6,5) -- (6,7) -- (4,7) -- (4,8) -- (1,8) -- cycle;

        		\draw[color=gray!25] (10,-2) -- (15,-2) -- (15,0) -- (13,0) -- (13,1) -- (10,1) -- cycle;
        		\draw[color=gray!25] (8,1) -- (13,1) -- (13,3) -- (11,3) -- (11,4) -- (8,4) -- cycle;
        		\draw[color=gray!25] (6,4) -- (11,4) -- (11,6) -- (9,6) -- (9,7) -- (6,7) -- cycle;
        		\draw[color=gray!25] (4,7) -- (9,7) -- (9,9) -- (7,9) -- (7,10) -- (4,10) -- cycle;
            \end{scope}

    		\draw[thick,-to] (-4.5,0) -- (9.5,0) node[anchor=west]{$x$};
    		\draw[thick,-to] (0,-4.5) -- (0,8.5) node[anchor=south]{$y$};
      
            \filldraw[draw=green3,fill=green3!20!white,fill opacity=0.8] (0,0) rectangle (5,3);

        	\draw[color=gray!50!white,fill=white,opacity=0.5] (3,2) -- (8,2) -- (8,4) -- (6,4) -- (6,5) -- (3,5) -- cycle;

        	\fill[green3!50!black] (P) circle (1pt) node[anchor=north,fill=white,opacity=0.7,draw=white]{$A\e_1 = (5,-1)$};
        	\fill[green3!50!black] (Q) circle (1pt) node[anchor=south,fill=white,opacity=0.7,draw=white,shift={(0,0.05)}]{$A\e_2 = (-2,3)$};
        	\fill[green3!50!black] (P) circle (6pt) node[anchor=north]{$A\e_1 = (5,-1)$};
        	\fill[green3!50!black] (Q) circle (6pt) node[anchor=south,shift={(0,0.05)}]{$A\e_2 = (-2,3)$};

        	\fill[green3!50!black] (O) circle (6pt);
        	\fill[green3!50!black] (PQ) circle (6pt);
        	\fill[green3!75!black] (-4,6) circle (3pt);
        	\fill[green3!75!black] (-1,8) circle (3pt);
        	\fill[green3!75!black] (-3,-2) circle (3pt);
        	\fill[green3!75!black] (1,5) circle (3pt);
        	\fill[green3!75!black] (2,-3) circle (3pt);
        	\fill[green3!75!black] (4,7) circle (3pt);
        	\fill[green3!75!black] (6,4) circle (3pt);
        	\fill[green3!75!black] (7,-4) circle (3pt);
        	\fill[green3!75!black] (8,1) circle (3pt);
        	\fill[green3!75!black] (9,6) circle (3pt);
    	\end{tikzpicture}
		\caption{A rectangular covering (with overlaps) of $\mathbb{R}^2$ corresponding the matrix $\left[\begin{smallmatrix}5 & -2 \\ -1 & 3\end{smallmatrix}\right]$.}\label{fig:tile_gaps2}
	\end{subfigure}
	
	\caption{Two $5 \times 3$ rectangular not-quite-tilings of $\mathbb{R}^2$ coming from matrices with diagonal entries $5$ and $3$. The shaded rectangle is $C_A$, while the other rectangles are its translates on the lattice $\Lambda_A$.}\label{fig:tile_gaps}
\end{figure}
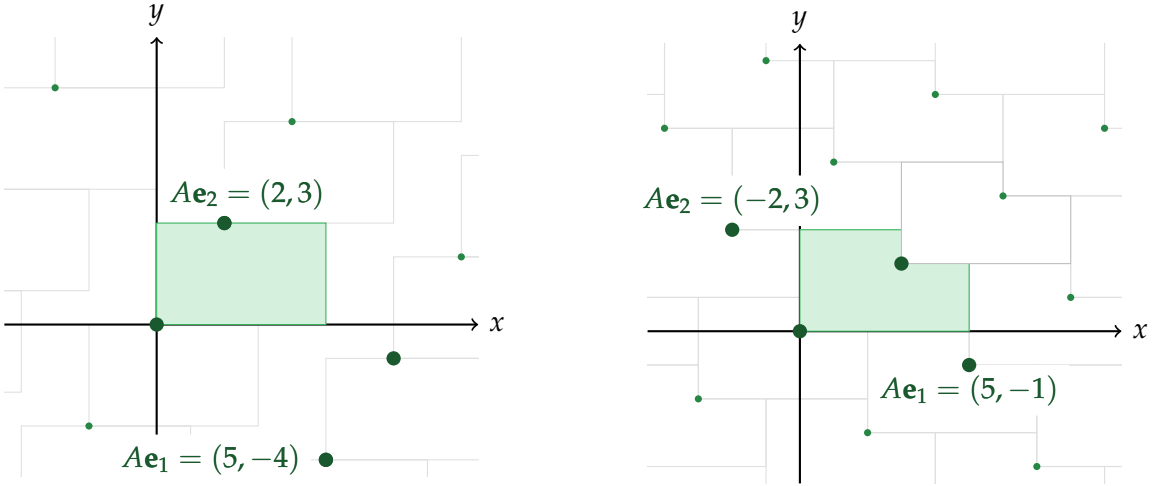

Since these cuboids may overlap and/or have gaps between them, they do not typically form a valid tiling of $\mathbb{R}^n$. In order to ``fix'' the fact that these cuboids can overlap, we define $F_A$ to be the axis-aligned polytope obtained from $C_A$ by removing copies of $C_A$ translated by sums of nonempty subsets of the columns of $A$.\footnote{It may be tempting to remove \emph{all} translates of $C_A$ from $C_A$, but look at Figure~\ref{fig:our_tiling} to see why this does not work; we only want to remove the overlap at the top-right vertex of $C_A$ \emph{or} bottom-left of $C_A$, not both.} We take the closure so that $F_A$ contains its boundary:
\begin{align}\label{eq:FA_defn}
    F_A := \overline{C_A \setminus \bigcup_{\emptyset \neq S \subseteq [n]}\left\{ x + \sum_{i \in S} A \mathbf{e_i} : x \in C_A \right\} }.
\end{align}

In order to similarly ``fix'' the fact that there may be gaps between the cuboids, for the remainder of the proof we only consider matrices $A$ with the following properties:
\begin{itemize}
    \item[(a)] Each diagonal entry $a_{i,i}$ is strictly positive;
    \item[(b)] Each off-diagonal entry, $a_{i,j}$ with $i\neq j$, is strictly negative; and
    \item[(c)] Each row has strictly positive sum, i.e., $A$ is strictly diagonally dominant.
\end{itemize}
Doing so simplifies the rest of the proof considerably (since the determinant for these matrices is strictly positive, so it equals a volume instead of a signed volume, for example), and it does not result in any loss of generality. Indeed, if two polynomials agree on some open set then they must agree everywhere; the definitional formula of the determinant from Equation~\eqref{eq:det_per_defn} and the formula described by Theorem~\ref{thm:main_formula} are both polynomials in the $n^2$ entries of the matrix $A$, so if they agree on some open set (like the set of matrices described by conditions (a)--(c), or even just the smaller set described by the upcoming Lemma~\ref{lem:FA_overlaps}) then they must agree everywhere.

In the next section we establish that, subject to these conditions (a)--(c), the translates of $F_A$ by vectors in $\Lambda_A$ tile the ambient space $\mathbb{R}^n$: their union covers all of space, and any two distinct translates intersect on a set of measure zero (i.e. on their boundary or not at all). We also show that two tiles $F_A + A\mathbf{u}$ and $F_A + A\mathbf{v}$ share a common boundary if and only if the difference $\mathbf{u} - \mathbf{v}$ is the sum of some non-empty proper subset of the vectors $\{ \mathbf{e_0}, \mathbf{e_1}, \ldots, \mathbf{e_n} \}$, in which case we describe these two translates as ``neighbours''. This is equivalent to $\mathbf{u} - \mathbf{v}$ being a nonzero vector with all entries in $\{0, 1\}$ or all entries in $\{0, -1\}$. Each tile has exactly $2(2^n - 1)$ neighbours.

This tiling admits a proper $(n+1)$-colouring, where $F_A + A\mathbf{v}$ is coloured according to the sum of the coordinates of $\mathbf{v}$ modulo $n+1$. Observe that if two tiles $F_A + A\mathbf{u}$ and $F_A + A\mathbf{v}$ are neighbours, then the coordinate sum of $\mathbf{u} - \mathbf{v}$ is in $[-n, -1] \cup [1, n]$ and is therefore nonzero modulo $n + 1$, so the tiles are assigned distinct colours. Figure~\ref{fig:our_tiling} shows $F_A$ and its neighbours coloured in this manner.

Moreover, the tiling is homeomorphic to the standard permutohedral tiling of $\mathbb{R}^n$ obtained by taking the Voronoi tessellation of the lattice $A_n^{\star}$ defined in \cite{CS88}. For $n = 2$, this is the familiar hexagonal tessellation; for $n = 3$, this is the tessellation by truncated octahedra whose centres form the body-centred cubic lattice.

\subsection{The Proof Itself}\label{sec:geo_proof_subsec}

For an arbitrary $n \times n$ matrix $A$, we shall consider five different propositions (each of which may be true or false, depending on $A$):

\begin{itemize}
    \item $P_1(A)$: $\det(A)$ is equal to the formula described by Equation~\eqref{eq:main_formula}.
    \item $P_2(A)$: the translates $\{F_A + \mathbf{z} : \mathbf{z} \in \Lambda_A\}$ have pairwise measure-zero intersections.
    \item $P_3(A)$: the translates $\{F_A + \mathbf{z} : \mathbf{z} \in \Lambda_A\}$ cover all of $\mathbb{R}^n$.
    \item $P_4(A)$: the volume of $F_A$ is equal to the formula described by Equation~\eqref{eq:main_formula}.
    \item $P_5(A)$: the volume of $F_A$ is equal to $\det(A)$.
\end{itemize}

Any two of $P_2(A), P_3(A), P_5(A)$ together imply the third (being equivalent to the claim that the translates of $F_A$ by the vectors in the lattice $\Lambda_A$ tile space). Also, any two of $P_1(A), P_4(A), P_5(A)$ together imply the third (by transitivity of equality). We summarise these relationships in Figure~\ref{fig:P15_diagram}.

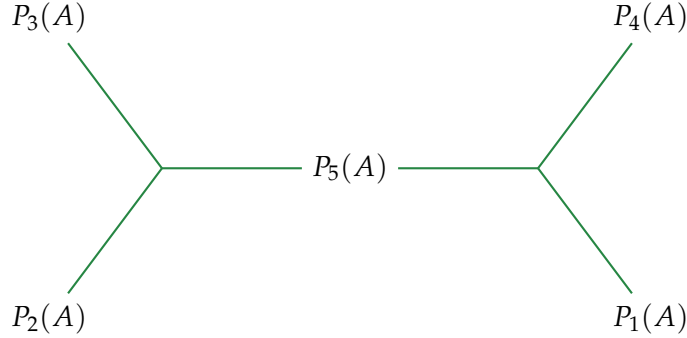
\begin{figure}[!htb]
    \centering
    \begin{tikzpicture}
        \node(P1) at (4, -2) {$P_1(A)$};
        \node(P2) at (-4, -2) {$P_2(A)$};
        \node(P3) at (-4, 2) {$P_3(A)$};
        \node(P4) at (4, 2) {$P_4(A)$};
        \node(P5) at (0, 0) {$P_5(A)$};
        \draw[thick,green3!75!black] (P1) -- (2.5, 0);
        \draw[thick,green3!75!black] (P4) -- (2.5, 0);
        \draw[thick,green3!75!black] (P5) -- (2.5, 0);
        \draw[thick,green3!75!black] (P2) -- (-2.5, 0);
        \draw[thick,green3!75!black] (P3) -- (-2.5, 0);
        \draw[thick,green3!75!black] (P5) -- (-2.5, 0);
    \end{tikzpicture}
    \caption{A summary of the relationship between the properties $P_1(A)$--$P_5(A)$. Any two of the properties in the left Y-shape imply the third, and any two of the properties in the right Y-shape imply the third.}\label{fig:P15_diagram}
\end{figure}

Theorem~\ref{thm:main_formula} is equivalent to $P_1(A)$ being true for all matrices $A$. The proof proceeds by showing that $P_3(A)$ and $P_4(A)$ are true for all matrices $A$ satisfying the three conditions (a)--(c), and that $P_2(A)$ is true for all matrices $A$ in a small open neighbourhood of a canonical matrix $B$. We deduce that $P_1(A)$ holds on a non-empty open set, and therefore (being an equality between two polynomials) holds in general, completing the proof of Theorem~\ref{thm:main_formula}. Note that, by traversing the implications in the other direction, it follows that $P_5(A)$ and $P_2(A)$ are true for all matrices $A$ satisfying conditions (a)--(c), and not just those in the small neighbourhood for which we prove $P_2(A)$ directly.

\begin{lemma}\label{lem:FA_overlaps}
    Consider the $n \times n$ matrix
    \begin{align}\label{eq:canon_matrix_B}
        B := (n+1)I - J,
    \end{align}
    where $J$ is the matrix with all entries equal to $1$. There is some $\varepsilon$-neighbourhood $N_\varepsilon$ of $B$ with the property that, for all $A \in N_\varepsilon$ and all $\mathbf{y} \neq \mathbf{z} \in \Lambda_A$, it is the case that $(F_A + \mathbf{y}) \cap (F_A + \mathbf{z})$ has measure zero (i.e., any two tiles in the tiling overlap only on their boundary or not at all).
\end{lemma}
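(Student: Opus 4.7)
The plan is to verify $P_2$ at the symmetric matrix $A=B$ by a combinatorial chain argument, then extend to a small neighborhood by continuity of an admissibility condition.

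\textbf{Step 1: direct verification at $A=B$.} Unwinding the definition of $F_A$, one sees that for a.e.\ $\mathbf{x}$ and every $\mathbf{k}\in\mathbb{Z}^n$, the condition $\mathbf{x}-A\mathbf{k}\in F_A$ is equivalent to $\mathbf{k}\in T_A(\mathbf{x}):=\{\mathbf{k}\in\mathbb{Z}^n:\mathbf{x}-A\mathbf{k}\in C_A\}$ together with $\mathbf{k}+\mathbf{1_S}\notin T_A(\mathbf{x})$ for every nonempty $S\subseteq[n]$; equivalently, $\mathbf{k}$ is maximal in $T_A(\mathbf{x})$ with respect to the coordinate-wise partial order on $\mathbb{Z}^n$. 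Hence $P_2(A)$ follows as soon as $T_A(\mathbf{x})$ has a unique coordinate-wise maximum for a.e.\ $\mathbf{x}$. For $A=B$, any two elements $\mathbf{k},\mathbf{k}'$ of $T_B(\mathbf{x})$ satisfy $B(\mathbf{k}-\mathbf{k}')\in[-n,n]^n$, so it suffices to compute $W:=\{\mathbf{w}\in\mathbb{Z}^n:B\mathbf{w}\in[-n,n]^n\}$. Using $(B\mathbf{w})_i=(n+1)w_i-\mathbf{1}^{\top}\mathbf{w}$, the constraint $|(n+1)w_i-\mathbf{1}^{\top}\mathbf{w}|\leq n$ for all $i$ forces each $w_i$ to lie in a real interval of length $2n/(n+1)<2$, and a short case analysis on $\mathbf{1}^{\top}\mathbf{w}$ yields $W=\{\pm\mathbf{1_S}:S\subseteq[n]\}$. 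Since any two elements of $T_B(\mathbf{x})$ thus differ by some $\pm\mathbf{1_S}$, they are comparable in the coordinate-wise order, so $T_B(\mathbf{x})$ is a chain with a unique maximum, proving $P_2(B)$.

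\textbf{Step 2: extension to a neighborhood of $B$.} For $A$ in a small neighborhood of $B$, I claim the analogous admissibility set $W_A:=\{\mathbf{w}\in\mathbb{Z}^n:A\mathbf{w}\in\prod_i[-a_{i,i},a_{i,i}]\}$ is still exactly $\{\pm\mathbf{1_S}\}$. Admissibility forces $\|\mathbf{w}\|_\infty$ to be bounded by a constant depending continuously on $A$ (essentially in terms of the operator norm of $A^{-1}$ and the diagonal entries of $A$), which stays uniformly bounded for $A$ near $B$ since $B$ is invertible. Hence only finitely many $\mathbf{w}$ need to be checked; for each such $\mathbf{w}\notin\{\pm\mathbf{1_S}\}$, the strict inequality $|B\mathbf{w}|_\infty>n$ from Step 1 persists under sufficiently small perturbations, while each $\pm\mathbf{1_S}$ remains admissible by continuity. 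Thus $W_A=\{\pm\mathbf{1_S}\}$, and the chain argument from Step 1 applies verbatim to yield $P_2(A)$.

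The main obstacle is the combinatorial case analysis in Step 1 identifying $W=\{\pm\mathbf{1_S}\}$. The length-$<2$ interval bound already restricts each $w_i$ to at most two candidate integer values (depending on $\mathbf{1}^{\top}\mathbf{w}$), but carefully ruling out mixed-sign or oversized configurations for every possible value of $\mathbf{1}^{\top}\mathbf{w}$ requires a short but careful enumeration. Once $W=\{\pm\mathbf{1_S}\}$ is established, the chain/unique-maximum argument at $B$ and its propagation to a neighborhood are both straightforward.
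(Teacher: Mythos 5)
Your reformulation via $T_A(\mathbf{x}) := \{\mathbf{k}\in\mathbb{Z}^n : \mathbf{x}-A\mathbf{k}\in C_A\}$ is an appealing alternative to the paper's argument, which instead shows directly that $F_B$ is ``completely surrounded'' by its neighbours via a sequence of $0$-moves and $j$-moves on upper-right vertices of unit cubes. However, your proposal has two problems, one factual and one a genuine gap.

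First, the claim $W = \{\pm\mathbf{1_S} : S\subseteq [n]\}$ is false. Take $\mathbf{w}=a\mathbf{1}$ for any integer $a$ with $|a|\le n$: since $B\mathbf{1}=\mathbf{1}$, we get $B\mathbf{w}=a\mathbf{1}\in[-n,n]^n$, so $a\mathbf{1}\in W$, yet for $|a|\ge 2$ this is not of the form $\pm\mathbf{1_S}$. (More elements appear for larger $n$; e.g.\ $(2,2,2,1,1)\in W$ when $n=5$.) Your observation that the interval has length $<2$ is correct, so every $\mathbf{w}\in W$ has all coordinates in $\{c,c+1\}$ for some integer $c$ and is therefore still comparable to $\mathbf{0}$ in the coordinate-wise order; hence the conclusion that $T_B(\mathbf{x})$ is a chain does survive. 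But the proof of that conclusion as written does not.

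Second, and more seriously, the ``equivalently, $\mathbf{k}$ is maximal in $T_A(\mathbf{x})$'' step is asserted without justification, and it is the crux of the whole lemma. Membership of $\mathbf{x}$ in $F_A + A\mathbf{k}$ requires $\mathbf{k}+\mathbf{1_S}\notin T_A(\mathbf{x})$ for all nonempty $S\subseteq[n]$; maximality is a priori a strictly stronger condition, because a non-maximal $\mathbf{k}$ could have $\mathbf{k}'>\mathbf{k}$ in $T_A(\mathbf{x})$ with $\mathbf{k}'-\mathbf{k}$ not a $0$--$1$ vector (e.g.\ a jump of $2\mathbf{1}$ in the chain), so that none of its $\mathbf{1_S}$-shifts lies in $T_A(\mathbf{x})$. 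In that case both $\mathbf{k}$ and the genuine maximum would pass the ``no $\mathbf{1_S}$-shift'' test, i.e.\ two tiles would contain $\mathbf{x}$ and $P_2$ would fail. To close this gap you must show that, for almost every $\mathbf{x}$, whenever $\mathbf{k}$ is non-maximal in $T_B(\mathbf{x})$ there is some nonempty $S$ with $\mathbf{k}+\mathbf{1_S}\in T_B(\mathbf{x})$ --- equivalently, that the chain $T_B(\mathbf{x})$ has all jumps of the form $\mathbf{1_S}$. That is exactly the content of the paper's moves argument, which exhibits, for each unit cube touching $F_B$, an explicit $S\subseteq\{0,1,\dots,n\}$ such that the cube belongs to the neighbour $F_B + B\sum_{j\in S}(-\mathbf{e_j})$. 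Without supplying something equivalent, the chain argument alone does not prove the lemma. The continuity step 2 is fine in spirit (the paper does essentially the same), but it currently rests on the incorrect identification of $W$.
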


Before proving the above lemma, we note that we will choose the $\varepsilon$-neighbourhood $N_\varepsilon$ to be small enough that every $A \in N_\varepsilon$ satisfies the three conditions (a)--(c) that we described earlier (which is possible since $B$ satisfies those three conditions and they define an open set).

$B$ is a positive-definite symmetric matrix with eigenvalues $1$ (with multiplicity $1$) and $n+1$ (with multiplicity $n-1$). The singular values of $B$ are equal to its eigenvalues, so the minimum singular value of $B$ is also equal to $1$. By choosing $\varepsilon$ small enough, we can ensure that every $A \in N_\varepsilon$ has minimum singular value greater than $\frac{1}{2}$.

For concreteness, let $\varepsilon_0 > 0$ be a constant (depending only on $n$) that ensures that every $A \in N_\varepsilon$ satisfies conditions (a)--(c) and has minimum singular value greater than $\frac{1}{2}$.

\begin{proof}[Proof of Lemma~\ref{lem:FA_overlaps}]
    It suffices to prove that $F_A \cap (F_A + \mathbf{z})$ has measure zero whenever $\mathbf{z} \in \Lambda_A$ is non-zero. Since $A$ is strictly diagonally dominant and thus invertible, $\mathbf{z} \in \Lambda_A$ being non-zero is equivalent to $\mathbf{z} = A\mathbf{v}$ for some non-zero $\mathbf{v} \in \mathbb{Z}^n$.

    If $\mathbf{v}$ is non-zero and has all entries in $\{0,1\}$ or all entries in $\{0,-1\}$ then, by the construction of $F_A$ given in Equation~\eqref{eq:FA_defn}, we know that $F_A \cap (F_A + A\mathbf{v})$ has measure zero (we call these $2(2^n - 1)$ tiles $F_A + A\mathbf{v}$ the ``neighbours'' of $F_A$; when $n = 2$ they are exactly the $6$ tiles that touch the central shaded tile in Figure~\ref{fig:our_tiling}). Our goal now is to show that every non-neighbour of $F_A$ has empty intersection with $F_A$.
    
    To this end, first consider the matrix $B$ from Equation~\eqref{eq:canon_matrix_B}. For this matrix, the polytope under consideration is
    \[
        F_B = \big\{ (x_1,x_2,\ldots,x_n) : 0 \leq x_i^\uparrow \leq i \ \text{for all} \ i \in [n] \big\},
    \]
    where $x_i^\uparrow$ is the $i$-th smallest entry of $(x_1,x_2,\ldots,x_n)$ (see Figure~\ref{fig:canon_matrix_3d}).  Equivalently, $F_B$ is the union of the $n!$ images of the cuboid $[0, 1] \times [0, 2] \times \cdots \times [0, n]$ under arbitrary permutations of the coordinates. We claim that $F_B$ does not intersect $F_B + B\mathbf{v}$ if $F_B + B\mathbf{v}$ is not a neighbour of $F_B$.
    
    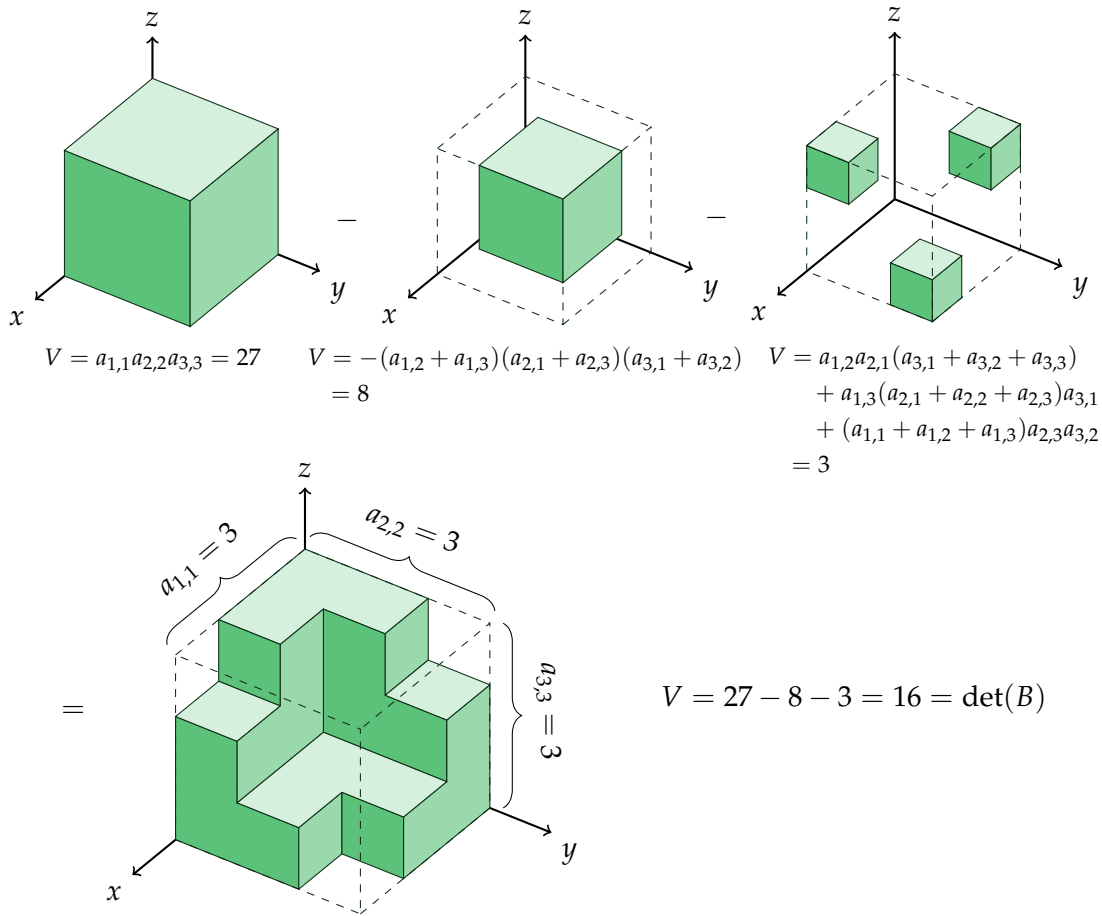
\begin{figure}[!htb]
    \centering
	\tdplotsetmaincoords{55}{125}\begin{tikzpicture}[scale=0.68,tdplot_main_coords]
		\draw[thick,-to] (0,0,0) -- (4,0,0) node[anchor=north east]{$x$};
		\draw[thick,-to] (0,0,0) -- (0,4,0) node[anchor=north west]{$y$};
		\draw[thick,-to] (0,0,0) -- (0,0,4) node[anchor=south]{$z$};
		
        \draw[draw=green3!25!black,fill=green3!80!white] (3,0,0) -- (3,0,3) -- (3,3,3) -- (3,3,0) -- cycle;
        \draw[draw=green3!25!black,fill=green3!50!white] (0,3,0) -- (0,3,3) -- (3,3,3) -- (3,3,0) -- cycle;
        \draw[draw=green3!25!black,fill=green3!20!white] (0,0,3) -- (3,0,3) -- (3,3,3) -- (0,3,3) -- cycle;
        
        \node[anchor=north,align=left] at (0,0,-3.2){\footnotesize $V = a_{1,1}a_{2,2}a_{3,3} = 27$\\ ${}$\\ ${}$\\ ${}$};
	\end{tikzpicture}\kern-2em
	\begin{tikzpicture}[scale=0.68,tdplot_main_coords]
		\draw[thick,-to] (0,0,0) -- (4,0,0) node[anchor=north east]{$x$};
		\draw[thick,-to] (0,0,0) -- (0,4,0) node[anchor=north west]{$y$};
		\draw[thick,-to] (0,0,0) -- (0,0,4) node[anchor=south]{$z$};

        \draw[green3!25!black,dashed] (3,0,0) -- (3,0,3) -- (3,3,3) -- (3,3,0) -- cycle;
        \draw[green3!25!black,dashed] (3,3,0) -- (0,3,0) -- (0,3,3) -- (3,3,3);
        \draw[green3!25!black,dashed] (3,0,3) -- (0,0,3) -- (0,3,3);
        
        \draw[draw=green3!25!black,fill=green3!80!white] (3,1,1) -- (3,1,3) -- (3,3,3) -- (3,3,1) -- cycle;
        \draw[draw=green3!25!black,fill=green3!50!white] (1,3,1) -- (1,3,3) -- (3,3,3) -- (3,3,1) -- cycle;
        \draw[draw=green3!25!black,fill=green3!20!white] (1,1,3) -- (3,1,3) -- (3,3,3) -- (1,3,3) -- cycle;
        
        \node at (2.5,-2.5,0){$-$};
        \node[anchor=north,align=left] at (0,0,-3.2){\footnotesize $V = -(a_{1,2} + a_{1,3})(a_{2,1} + a_{2,3})(a_{3,1} + a_{3,2})$\\\footnotesize \quad $ = 8$\\ ${}$\\ ${}$};
	\end{tikzpicture}\kern-2em
	\begin{tikzpicture}[scale=0.68,tdplot_main_coords]
		\draw[thick,-to] (0,0,0) -- (4,0,0) node[anchor=north east]{$x$};
		\draw[thick,-to] (0,0,0) -- (0,4,0) node[anchor=north west]{$y$};
		\draw[thick,-to] (0,0,0) -- (0,0,4) node[anchor=south]{$z$};
        
        \draw[draw=green3!25!black,fill=green3!80!white] (1,2,2) -- (1,2,3) -- (1,3,3) -- (1,3,2) -- cycle;
        \draw[draw=green3!25!black,fill=green3!50!white] (0,3,2) -- (0,3,3) -- (1,3,3) -- (1,3,2) -- cycle;
        \draw[draw=green3!25!black,fill=green3!20!white] (1,2,3) -- (0,2,3) -- (0,3,3) -- (1,3,3) -- cycle;
        
        \draw[draw=green3!25!black,fill=green3!50!white] (2,1,2) -- (2,1,3) -- (3,1,3) -- (3,1,2) -- cycle;
        \draw[draw=green3!25!black,fill=green3!80!white] (3,0,2) -- (3,0,3) -- (3,1,3) -- (3,1,2) -- cycle;
        \draw[draw=green3!25!black,fill=green3!20!white] (2,1,3) -- (2,0,3) -- (3,0,3) -- (3,1,3) -- cycle;
        
        \draw[draw=green3!25!black,fill=green3!20!white] (2,2,1) -- (2,3,1) -- (3,3,1) -- (3,2,1) -- cycle;
        \draw[draw=green3!25!black,fill=green3!80!white] (3,2,0) -- (3,3,0) -- (3,3,1) -- (3,2,1) -- cycle;
        \draw[draw=green3!25!black,fill=green3!50!white] (2,3,1) -- (2,3,0) -- (3,3,0) -- (3,3,1) -- cycle;
		
        \draw[green3!25!black,dashed] (3,0,0) -- (3,0,3) -- (3,3,3) -- (3,3,0) -- cycle;
        \draw[green3!25!black,dashed] (3,3,0) -- (0,3,0) -- (0,3,3) -- (3,3,3);
        \draw[green3!25!black,dashed] (3,0,3) -- (0,0,3) -- (0,3,3);
        
        \node at (2.5,-2.5,0){$-$};
        \node[anchor=north,align=left] at (0.1,1,-2.8){\footnotesize $V = a_{1,2}a_{2,1}(a_{3,1} + a_{3,2} + a_{3,3})$\\\footnotesize \qquad $+ \ a_{1,3}(a_{2,1} + a_{2,2} + a_{2,3})a_{3,1}$\\\footnotesize \qquad $+ \ (a_{1,1} + a_{1,2} + a_{1,3})a_{2,3}a_{3,2}$\\\footnotesize \quad $ = 3$};
	\end{tikzpicture}\\[-1.1em]
	\begin{tikzpicture}[scale=1,tdplot_main_coords]
		\draw[thick,-to] (0,0,0) -- (4,0,0) node[anchor=north east]{$x$};
		\draw[thick,-to] (0,0,0) -- (0,4,0) node[anchor=north west]{$y$};
		\draw[thick,-to] (0,0,0) -- (0,0,4) node[anchor=south]{$z$};
		
        \draw[draw=green3!25!black,fill=green3!80!white] (3,0,0) -- (3,0,2) -- (3,1,2) -- (3,1,1) -- (3,2,1) -- (3,2,0) -- cycle;
        \draw[draw=green3!25!black,fill=green3!50!white] (0,3,0) -- (0,3,2) -- (1,3,2) -- (1,3,1) -- (2,3,1) -- (2,3,0) -- cycle;
        \draw[draw=green3!25!black,fill=green3!20!white] (0,0,3) -- (0,2,3) -- (1,2,3) -- (1,1,3) -- (2,1,3) -- (2,0,3) -- cycle;
        \draw[draw=green3!25!black,fill=green3!80!white] (2,0,2) -- (2,0,3) -- (2,1,3) -- (2,1,2) -- cycle;
        \draw[draw=green3!25!black,fill=green3!20!white] (3,0,2) -- (2,0,2) -- (2,1,2) -- (3,1,2) -- cycle;
        \draw[draw=green3!25!black,fill=green3!50!white] (0,2,2) -- (0,2,3) -- (1,2,3) -- (1,2,2) -- cycle;
        \draw[draw=green3!25!black,fill=green3!20!white] (0,3,2) -- (0,2,2) -- (1,2,2) -- (1,3,2) -- cycle;
        \draw[draw=green3!25!black,fill=green3!50!white] (3,2,0) -- (2,2,0) -- (2,2,1) -- (3,2,1) -- cycle;
        \draw[draw=green3!25!black,fill=green3!80!white] (2,2,0) -- (2,3,0) -- (2,3,1) -- (2,2,1) -- cycle;
        \draw[draw=green3!25!black,fill=green3!50!white] (3,1,1) -- (3,1,2) -- (2,1,2) -- (2,1,3) -- (1,1,3) -- (1,1,1) -- cycle;
        \draw[draw=green3!25!black,fill=green3!80!white] (1,3,1) -- (1,3,2) -- (1,2,2) -- (1,2,3) -- (1,1,3) -- (1,1,1) -- cycle;
        \draw[draw=green3!25!black,fill=green3!20!white] (3,1,1) -- (3,2,1) -- (2,2,1) -- (2,3,1) -- (1,3,1) -- (1,1,1) -- cycle;
        
        \draw[green3!25!black,dashed] (3,0,0) -- (3,0,3) -- (3,3,3) -- (3,3,0) -- cycle;
        \draw[green3!25!black,dashed] (3,3,0) -- (0,3,0) -- (0,3,3) -- (3,3,3);
        \draw[green3!25!black,dashed] (3,0,3) -- (0,0,3) -- (0,3,3);

        \draw [decorate, decoration={brace,raise=5pt,amplitude=7pt}] (3,0,3) --node[anchor=south,rotate=40,shift={(-0.6,-0.5)}]{$a_{1,1} = 3$} (0,0,3);
        \draw [decorate, decoration={brace,raise=5pt,amplitude=7pt}] (0,0,3) --node[anchor=south,rotate=-20,shift={(-0.6,-0.5)}]{$a_{2,2} = 3$} (0,3,3);
        \draw [decorate, decoration={brace,raise=5pt,amplitude=7pt}] (0,3,3) --node[anchor=south,rotate=-90,shift={(-0.6,-0.5)}]{$a_{3,3} = 3$} (0,3,0);
        
        \node[anchor=east] at (2,-2,0.7){$=$};
        \node[anchor=west] at (-3.3,3.3,0){$V = 27 - 8 - 3 = 16 = \det(B)$};
	\end{tikzpicture}
 
    \caption{The polytope $F_B$ for the matrix $B = (n+1)I - J$, with $n = 3$. Its volume is equal to the sum and difference of the volumes of $5$ different cubes, corresponding to the $5$-term formula for the determinant~\eqref{eq:det_3x3}.}\label{fig:canon_matrix_3d}
\end{figure}

To prove this claim, suppose that $F_B + B\mathbf{v}$ is not a neighbour
of $F_B$.

Firstly, consider the case where there exist indices $i$ and $j$ such that
$\mathbf{v_i} - \mathbf{v_j} \geq 2$. Then $(B\mathbf{v})_i - (B\mathbf{v})_j
= (n + 1)(\mathbf{v_i} - \mathbf{v_j}) \geq 2(n+1)$. Consequently, one of
$(B\mathbf{v})_i$ and $(B\mathbf{v})_j$ has absolute value $\geq n + 1$, which means
that the bounding cubes of $F_B + B\mathbf{v}$ and $F_B$ (which each have
sidelength $n$) are disjoint.

This leaves the case where all entries of $\mathbf{v}$ differ by at most 1.
In other words, there exists $c \in \mathbb{Z}$ such that for all $i$,
we have $\mathbf{v_i} \in \{c-1, c\}$, and moreover there exists at least
one such $j$ such that $\mathbf{v_j} = c$.

We can assume without loss of generality that $c$ is positive (because
$F_B + B\mathbf{v}$ is disjoint from $F_B$ if and only if $F_B - B\mathbf{v}$
is disjoint from $F_B$). Moreover, if $c \in \{0, 1\}$ then the polytopes
are neighbours, so we have $c \geq 2$.

By permuting the coordinates, we can assume that $\mathbf{v_1}, \dots, \mathbf{v_k} = c$ and
$\mathbf{v_{k+1}}, \dots, \mathbf{v_n} = c - 1$. Then we can compute that:

\[ (B\mathbf{v})_1 = \cdots = (B\mathbf{v})_k = n - k + c \]
and, because $c \geq 2$, this means that at least $k$ of the entries
$(B\mathbf{v})_i \geq n - k + 2$. As such, $B\mathbf{v} \notin F_B$, because every
point $x \in F_B$ has at least $n - k + 1$ entries $x_i \leq n - k + 1$.
Moreover, because $F_B$ is contained in the positive orthant, we also
have $x + B\mathbf{v} \notin F_B$ for all $x \in F_B$, establishing that
the two polytopes are disjoint.

    Since $F_B$ has empty intersection with $F_B + B\mathbf{v}$ whenever they are non-neighbours, and $B$ has integer entries, the distance between non-neighbours $F_B$ and $F_B + B\mathbf{v}$ must be at least $1$. Since the coordinates of $F_A$ and $F_A + A\mathbf{v}$ are continuous in the entries of $A$, we conclude that there is some $\varepsilon$-neighbourhood $N_\varepsilon$ of $B$ with the property that $F_A \cap (F_A + A\mathbf{v}) = \emptyset$ whenever $A \in N_\varepsilon$ and $F_A + A\mathbf{v}$ is not a neighbour of $F_A$.
    
    However, the particular choice of $\varepsilon$ can depend on the vector $\mathbf{v}$, so we denote it by $\varepsilon_{\mathbf{v}}$. To fix this problem, note that if $\|\mathbf{v}\| \geq 2K$ then $\|A\mathbf{v}\| > K$ (since the minimum singular value of $A$ is at least $\frac{1}{2}$), so there are only finitely many $\mathbf{v} \in \mathbb{Z}^n$ for which $F_A \cap (F_A + A\mathbf{v})$ is potentially non-empty: choose $K$ to be an upper bound on the diameter of $F_A$; the only $\mathbf{v} \in \mathbb{Z}^n$ that need to be considered are those in the ball of radius $2K$. Defining $\varepsilon$ to be the minimum of $\varepsilon_0$ (defined at the beginning of this proof) and these finitely many $\varepsilon_{\mathbf{v}}$'s completes the proof.
\end{proof}

\begin{lemma}\label{lem:FA_covers}
    If $A$ satisfies conditions (a)--(c) above then $F_A + \Lambda_A = \R^n$ (i.e., there are no gaps in the tiling).
\end{lemma}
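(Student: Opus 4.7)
The plan is, for any given point $\mathbf{p} \in \R^n$, to construct an explicit lattice vector $\mathbf{z} \in \Lambda_A$ with $\mathbf{p} - \mathbf{z} \in F_A$, via a greedy maximization on $\Lambda_A$. The key enabling fact is that conditions (a)--(c) force $A$ to be an invertible M-matrix --- strictly diagonally dominant, with positive diagonal and non-positive off-diagonal entries --- so that $A^{-1}$ exists and has all non-negative entries. I would verify this by writing $A = D(I - D^{-1}N)$ with $D = \operatorname{diag}(A)$ and $N := D - A$ entry-wise non-negative, noting that condition (c) yields $\|D^{-1}N\|_\infty < 1$, and then expanding $(I - D^{-1}N)^{-1}$ as a convergent Neumann series to conclude that $A^{-1} = (I - D^{-1}N)^{-1} D^{-1} \geq 0$ componentwise.

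Next, I would define $V_{\mathbf{p}} := \{\mathbf{v} \in \mathbb{Z}^n : A\mathbf{v} \leq \mathbf{p} \text{ componentwise}\}$ and pick $\mathbf{v}^{\star} \in V_{\mathbf{p}}$ maximizing the coordinate sum $\mathbf{1}^{\top} \mathbf{v}$. Such a maximizer exists because $V_{\mathbf{p}}$ is non-empty (the vector $-M\mathbf{1}$ lies in $V_{\mathbf{p}}$ for any sufficiently large integer $M$, since $A\mathbf{1} > 0$ by condition (c)), and the coordinate sum is bounded above on $V_{\mathbf{p}}$ (applying $A^{-1} \geq 0$ to $\mathbf{p} - A\mathbf{v} \geq 0$ gives $\mathbf{v} \leq A^{-1}\mathbf{p}$ componentwise, so $\mathbf{1}^{\top} \mathbf{v} \leq \mathbf{1}^{\top} A^{-1} \mathbf{p}$).

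Setting $\mathbf{q} := \mathbf{p} - A\mathbf{v}^{\star}$, I would then verify $\mathbf{q} \in F_A$ via three claims, each proved by contradicting the sum-maximality of $\mathbf{v}^{\star}$. First, $\mathbf{q} \geq 0$ componentwise, directly from $\mathbf{v}^{\star} \in V_{\mathbf{p}}$. Second, $q_j < a_{j,j}$ for every $j$: if instead $q_j \geq a_{j,j}$, then all coordinates of $\mathbf{q} - A\mathbf{e}_j$ are non-negative (the $j$-th is $q_j - a_{j,j} \geq 0$, and for $i \neq j$ the $i$-th is $q_i - a_{i,j} > q_i \geq 0$, using $a_{i,j} < 0$), so $\mathbf{v}^{\star} + \mathbf{e}_j \in V_{\mathbf{p}}$ with strictly larger sum. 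Third, for every non-empty $S \subseteq [n]$ there is some index $j$ with $q_j < \sum_{i \in S} a_{j,i}$ strictly: otherwise $\mathbf{q} - \sum_{i \in S} A\mathbf{e}_i \geq 0$ componentwise and $\mathbf{v}^{\star} + \sum_{i \in S}\mathbf{e}_i \in V_{\mathbf{p}}$ would again violate maximality.

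The first two claims place $\mathbf{q}$ inside the half-open cuboid, and the third shows that $\mathbf{q}$ is not in the interior of any $C_A + \sum_{i \in S} A\mathbf{e}_i$. More importantly, the strictness of the inequality in the third claim enables a perturbation: for all sufficiently small $\epsilon > 0$ (a single $\epsilon$ works by finiteness of the family of non-empty $S \subseteq [n]$), the point $\mathbf{q} + \epsilon\mathbf{1}$ still lies in $C_A$ yet avoids every $C_A + \sum_{i \in S} A\mathbf{e}_i$, placing $\mathbf{q}$ in the closure that defines $F_A$. Therefore $\mathbf{p} = \mathbf{q} + A\mathbf{v}^{\star} \in F_A + \Lambda_A$, as required. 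The main subtlety I expect to be the principal obstacle is exactly this perturbation step, since $\mathbf{q}$ may lie on the boundary of several $C_A + \sum_{i \in S} A\mathbf{e}_i$ simultaneously; it is the strictness of the inequalities forced by sum-maximization (rather than any generic-position hypothesis) that provides the wiggle room needed to move $\mathbf{q}$ into the open set whose closure is $F_A$.
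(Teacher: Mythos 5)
Your proof is correct, and it takes a genuinely different route from the paper. The paper first reduces the claim to showing that every point of $\R^n$ can be translated by a lattice vector into the cuboid $C_A$ (not $F_A$ itself), asserting that $C_A$ is covered by the translates $F_A + A\mathbf{b}$ with $\mathbf{b} \in \{0,1\}^n$, and then constructs the required lattice translate by an iterative ``decrement the coordinate that is too large'' procedure whose termination is guaranteed via the Perron--Frobenius theorem applied to $cI - A^\t$. You instead land directly in $F_A$ by a single optimization: maximize $\mathbf{1}^\t\mathbf{v}$ over $\{\mathbf{v} \in \mathbb{Z}^n : A\mathbf{v} \leq \mathbf{p}\}$, using the fact that $A^{-1} \geq 0$ (proved by Neumann series from strict diagonal dominance) to show the feasible set is bounded above. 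The maximality of $\mathbf{v}^\star$ then immediately gives the strict inequalities you need. This is tighter in two ways: it avoids the paper's intermediate claim that $C_A \subseteq \bigcup_{\mathbf{b}\in\{0,1\}^n}(F_A + A\mathbf{b})$ (which the paper states without detailed justification), and it replaces the Perron--Frobenius machinery with elementary linear algebra. Both proofs ultimately rest on the same M-matrix structure of $A$, just packaged differently.

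One remark: your final perturbation step is unnecessary. Your claims already show that $\mathbf{q}$ lies strictly below the lower face of every translated cuboid $C_A + \sum_{i\in S}A\mathbf{e_i}$ (in some coordinate), so $\mathbf{q} \in C_A \setminus \bigcup_{\emptyset \neq S}\bigl(C_A + \sum_{i\in S}A\mathbf{e_i}\bigr)$ outright, and this set is contained in its own closure $F_A$. There is no need to exhibit nearby points in the interior; the ``principal obstacle'' you anticipated does not arise. This is a harmless redundancy, not an error.
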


\begin{proof}
    We first note that it suffices to show that, for each $\mathbf{x} \in \R^n$, there exists $\mathbf{z} \in \Lambda_A$ such that $\mathbf{x} + \mathbf{z} \in C_A$. To see this, recall from Equation~\eqref{eq:FA_defn} that $F_A$ is constructed from $C_A$ by removing its immediate neighbours in the positive direction, so if $\mathbf{x} + \mathbf{z} \in C_A$ then $\mathbf{x} \in (F_A + A\mathbf{b}) - \mathbf{z}$ for some binary vector $\mathbf{b} \in \{0,1\}^n$. Since $A\mathbf{b} - \mathbf{z} \in \Lambda_A$, this implies $\mathbf{x} \in F_A + \Lambda_A$.

    To find $\mathbf{z}$, first pick some $\mathbf{z^{(0)}} \in \Lambda_A$ such that each entry of $\mathbf{y^{(0)}} := \mathbf{x} + \mathbf{z^{(0)}}$ is non-negative (such a $\mathbf{z^{(0)}}$ exists since $A$ is strictly diagonally dominant and thus invertible). Set $k = 0$ and proceed inductively as follows:

    \begin{itemize}
        \item[(i)] If $\mathbf{y}^{\mathbf{(k)}}_i \leq a_{i,i}$ for all $1 \leq i \leq n$ then $\mathbf{y^{(k)}} \in C_A$, so we can choose $\mathbf{z} = \mathbf{z^{(k)}}$ and be done.
        
        \item[(ii)] Otherwise, pick an index $1 \leq i \leq n$ for which $\mathbf{y}^{\mathbf{(k)}}_i > a_{i,i}$ and set $\mathbf{y}^{\mathbf{(k+1)}}_i = \mathbf{y}^{\mathbf{(k)}}_i - A\mathbf{e_i}$ and $\mathbf{z}^{\mathbf{(k+1)}}_i = \mathbf{z}^{\mathbf{(k)}}_i - A\mathbf{e_i}$. Increase $k$ by $1$ and then repeat these bullet points.
    \end{itemize}

    Since the diagonal entries of $A$ are strictly positive, it is clear that $0 \leq \mathbf{y}^{\mathbf{(k+1)}}_i < \mathbf{y}^{\mathbf{(k)}}_i$. However, decreasing the $i$-th entry like this comes at the expense of increasing the other entries (since the off-diagonal entries of $A$ are negative). It is thus not obvious that the inductive procedure described above terminates. To see that it does, we demonstrate the existence of a vector $\mathbf{v} \in \R^n$ and a scalar $0 < d \in \R$ with the properties that $\mathbf{v} \cdot \mathbf{y}^{\mathbf{(k)}} \geq 0$ for all $k$ and $(\mathbf{v} \cdot \mathbf{y}^{\mathbf{(k)}}) - (\mathbf{v} \cdot \mathbf{y}^{\mathbf{(k+1)}}) \geq d$ for all $k$, implying that the procedure terminates for some $k \leq (\mathbf{v} \cdot \mathbf{y}^{\mathbf{(0)}}) / d$.
    
    To construct such a $\mathbf{v}$ and $d$, let $c \in \R$ be large enough that $cI - A^T$ has all entries strictly positive. The Perron--Frobenius theorem tells us that there is a strictly positive real eigenvalue $\lambda$ with a corresponding entrywise strictly positive eigenvector $\mathbf{v}$ such that $(cI - A^T)\mathbf{v} = \lambda\mathbf{v}$. Since $\mathbf{v}$ and $\mathbf{y}^{\mathbf{(k)}}$ are both entrywise non-negative, we have $\mathbf{v} \cdot \mathbf{y}^{\mathbf{(k)}} \geq 0$ for all $k$. Furthermore, since $(c-\lambda,\mathbf{v})$ is also an eigenvalue-eigenvector pair of $A^T$, and $A$ is strictly diagonally dominant (and $c$ and $\lambda$ are both real), we know that $c - \lambda > 0$. It follows that
    \[
        \mathbf{v} \cdot (A\mathbf{e_i}) = (A^T\mathbf{v}) \cdot \mathbf{e_i} = (c - \lambda)\mathbf{v} \cdot \mathbf{e_i} = (c-\lambda)v_i > 0 \quad \text{for all} \quad 1 \leq i \leq n.
    \]
    If we choose $d := (c-\lambda)\min_i\{v_i\} > 0$ then it follows that $(\mathbf{v} \cdot \mathbf{y}^{\mathbf{(k)}}) - (\mathbf{v} \cdot \mathbf{y}^{\mathbf{(k+1)}}) = \mathbf{v} \cdot (A\mathbf{e_i}) \geq d$, which completes the proof.
\end{proof}

\begin{lemma}\label{lem:FA_volume}
    If $A$ satisfies conditions (a)--(c) above then the volume of $F_A$ is given by the expression in Equation~\eqref{eq:main_formula}.
\end{lemma}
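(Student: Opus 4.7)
The plan is to prove Lemma~\ref{lem:FA_volume} by applying inclusion--exclusion directly to the definition~\eqref{eq:FA_defn}. Since both sides of the desired equality are polynomials in the entries of $A$, and only need to agree on an open set in order to establish Theorem~\ref{thm:main_formula}, it suffices to treat matrices $A$ in a sufficiently small neighbourhood of the canonical matrix $B$ from Lemma~\ref{lem:FA_overlaps}. Inclusion--exclusion gives
\[
\mathrm{vol}(F_A) \;=\; \sum_{\mathcal{S}}(-1)^{|\mathcal{S}|}\, \mathrm{vol}\!\Bigl(C_A \cap \bigcap_{S \in \mathcal{S}}(C_A + v_S)\Bigr),
\]
where $\mathcal{S}$ ranges over finite collections of non-empty subsets of $[n]$ and $v_S := \sum_{i \in S} A\mathbf{e_i}$. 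Each intersection is an axis-aligned box, so its volume factorises coordinate-wise: setting $\theta_T^j := a_{jj} + \sum_{i \in T} a_{ji}$, the length in coordinate $j$ equals $\max\{0,\, u_j(\mathcal{S}) - l_j(\mathcal{S})\}$, where $u_j(\mathcal{S})$ is the minimum of $a_{jj}$ and the values $\theta_S^j$ for $S \in \mathcal{S}$ with $j \notin S$, and $l_j(\mathcal{S})$ is the maximum of $0$ and the values $\theta_{S \setminus \{j\}}^j$ for $S \in \mathcal{S}$ with $j \in S$.

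The key step is to show that only \emph{chains} $\mathcal{S} = \{T_1 \subsetneq T_2 \subsetneq \cdots \subsetneq T_k\}$ contribute non-trivially. Suppose $\mathcal{S}$ contains incomparable subsets $S_1, S_2$, with (without loss of generality) $|S_1| \leq |S_2|$. Picking any $j \in S_1 \setminus S_2$ (non-empty by incomparability), $S_1$ supplies a lower bound and $S_2$ supplies an upper bound in coordinate $j$, so
\[
u_j(\mathcal{S}) - l_j(\mathcal{S}) \;\leq\; \theta_{S_2}^j - \theta_{S_1 \setminus \{j\}}^j.
\]
Evaluated at $B$ (where $a_{ii} = n$ and $a_{ij} = -1$ for $i \neq j$), this difference equals $|S_1| - |S_2| - 1 \leq -1 < 0$, so by continuity it remains strictly negative throughout a small enough neighbourhood of $B$. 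Hence the intersection is empty, and such $\mathcal{S}$ contributes zero.

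Each surviving chain corresponds bijectively to an ordered partial partition $(S_1, \ldots, S_k)$ of a subset of $[n]$ via $S_i := T_i \setminus T_{i-1}$ (with $T_0 := \emptyset$). A direct calculation of $u_j$ and $l_j$ shows that the coordinate-$j$ length equals $-\sum_{i \in S_{i_0} \setminus \{j\}} a_{ji}$ when $j$ lies in the part $S_{i_0}$, and $a_{jj} + \sum_{i \in T_k} a_{ji}$ when $j$ lies in no part; the first expression vanishes when $S_{i_0}$ is a singleton, so only chains corresponding to ordered partial partitions with no singleton parts survive. Combining the inclusion--exclusion sign $(-1)^k$ with the $|\bigcup P|$ sign flips needed to rewrite the row contributions of~\eqref{eq:main_formula} in terms of $-a_{ji}$'s reproduces exactly $\sgn(P) = \prod_{S \in P}(-1)^{|S|+1}$, and summing over the $k!$ chains associated with each unordered partial partition $P$ produces the factor $|P|!$ in~\eqref{eq:main_formula}. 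The main obstacle is the ``only chains contribute'' step---that the single-coordinate negative-length bound rules out every non-chain $\mathcal{S}$---after which the rest is a mechanical matching of signs and a count of orderings of parts.
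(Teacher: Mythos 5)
Your proposal follows the same inclusion--exclusion route as the paper, but you execute the crucial step---that non-chain collections $\mathcal{S}$ contribute nothing---more carefully than the paper itself does, and this extra care is actually needed. The paper asserts, for \emph{every} $A$ satisfying conditions (a)--(c), that the intersection $C_A\cap\bigcap_j\bigl(C_A + \sum_{i\in S_j}A\mathbf{e_i}\bigr)$ has nonzero volume if and only if the distinct sets $S_j$ form a chain with successive gaps of size at least two. That assertion is false in general: with $n=4$, diagonal entries all $20$, $a_{1,2}=a_{2,1}=a_{3,4}=a_{4,3}=-10$, and the remaining off-diagonals $-1$, conditions (a)--(c) hold, yet the incomparable pair $S_1=\{1,2\}$, $S_2=\{3,4\}$ gives intersection $[10,18]^4$ of volume $4096$. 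Your restriction to a small neighbourhood of $B=(n+1)I-J$---together with the check that at $B$ the bound $\theta_{S_2}^{j}-\theta_{S_1\setminus\{j\}}^{j}=|S_1|-|S_2|-1<0$ (with $j\in S_1\setminus S_2$ and $|S_1|\le|S_2|$, both available by incomparability) forces the intersection to vanish, and a finiteness-plus-continuity argument---repairs this gap. Since the desired equality is a polynomial identity, establishing it on a non-empty open set is enough for Theorem~\ref{thm:main_formula}, so weakening the lemma's scope costs nothing. The remaining bookkeeping in your sketch (chains $\leftrightarrow$ ordered partial partitions, the per-coordinate length formula forcing singleton parts to contribute zero, the sign identity $(-1)^k(-1)^{|\bigcup P|}=\sgn(P)$, and the $k!$ orderings of the parts giving $|P|!$) matches the paper's calculation.
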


\begin{proof}
    The volume of the cuboid $C_A$ is clearly equal to $a_{1,1}a_{2,2}\cdots a_{n,n}$, which is one of the terms in the sum~\eqref{eq:main_formula} (it is the term corresponding to the empty partial partition). We now use inclusion-exclusion to show that the rest of the terms in that sum correspond to volumes that were removed by translations of $C_A$ when creating $F_A$ as in Equation~\eqref{eq:FA_defn}.

    For a non-empty $S \subseteq [n]$, the set
    \begin{align}\label{eq:cuboid_intersection}
        C_A \cap \left(C_A + \sum_{i \in S}A\mathbf{e_i}\right)
    \end{align}
    is a cuboid with its $i$-th side length $\ell_i$ equal to
    \[
        \ell_i = \begin{cases}
            \displaystyle a_{i,i}-\sum_{j \in S}a_{i,j}, & \text{if $i \in S$} \\
            \displaystyle a_{i,i} + \sum_{j \in S}a_{i,j}, & \text{if $i \notin S$}
        \end{cases} = \begin{cases}
            \displaystyle -\sum_{j \in S,j \neq i}a_{i,j}, & \text{if $i \in S$} \\
            \displaystyle a_{i,i} + \sum_{j \in S}a_{i,j}, & \text{if $i \notin S$}.
        \end{cases}.
    \]
    Multiplying these side lengths together gives us the volume of the cuboid~\eqref{eq:cuboid_intersection}. Subtracting this quantity for all non-empty $S \subseteq [n]$ (i.e., subtracting the volume of all of these cuboids that are removed from $C_A$ to create $F_A$) results in the following (not yet correct) formula for the volume of $F_A$:
    \begin{align}\label{eq:det_formula_geometry_inc}
        a_{1,1}a_{2,2}\cdots a_{n,n} - \sum_{\emptyset \neq S \subseteq [n]}\prod_{i=1}^n \ell_i = a_{1,1}a_{2,2}\cdots a_{n,n} - \sum_{\emptyset \neq S \subseteq [n]}\prod_{i=1}^n\begin{cases}
            \displaystyle -\sum_{j \in S,j \neq i}a_{i,j}, & \text{if $i \in S$} \\
            \displaystyle a_{i,i} + \sum_{j \in S}a_{i,j}, & \text{if $i \notin S$}.
        \end{cases}
    \end{align}

    When $n \leq 3$ the formula~\eqref{eq:det_formula_geometry_inc} is the same as the formula~\eqref{eq:main_formula} and indeed equals the volume of $F_A$. In particular, if $n = 2$ then it expresses the area of the shaded tile $F_A$ in Figure~\ref{fig:our_tiling} as the area $a_{1,1}a_{2,2}$ of the rectangle $C_A$ from Figure~\ref{fig:tile_gaps2} minus the area of the rectangular overlapping region at its top-right vertex, and if $n = 3$ then it expresses the volume of $F_A$ as the volume $a_{1,1}a_{2,2}a_{3,3}$ of the cuboid $C_A$ minus the volumes of $4$ other cuboids (these cuboids are depicted in the case of the matrix $(n+1)I - J$ in Figure~\ref{fig:canon_matrix_3d}, and the picture for other matrices satisfying (a)--(c) is similar).

    When $n \geq 4$, however, the formula~\eqref{eq:det_formula_geometry_inc} is not quite correct, since there are overlaps-of-overlaps of the translates of $C_A$, so some volume that is removed from $C_A$ in Equation~\eqref{eq:det_formula_geometry_inc} is removed multiple times. To correct this mistake, we proceed via inclusion-exclusion: we add back the volumes that were subtracted too many times, then we subtract volumes that were added back too many times, and so on.

    For example, when $n = 4$, the cuboids corresponding to the subsets $S_1 = \{1,2\}$, and $S_2 = \{1,2,3,4\}$ (i.e., the cuboids $C_A + A(\mathbf{e_1} + \mathbf{e_2})$ and $C_A + A(\mathbf{e_1} + \mathbf{e_2} + \mathbf{e_3} + \mathbf{e_4})$) overlap with each other, so formula~\eqref{eq:det_formula_geometry_inc} is too small: it subtracts off the volume of the cuboid
    \[
        (C_A + A(\mathbf{e_1} + \mathbf{e_2})) \cap (C_A + A(\mathbf{e_1} + \mathbf{e_2} + \mathbf{e_3} + \mathbf{e_4}))
    \]
    twice (this volume equals $a_{1,2}a_{2,1}a_{3,4}a_{4,3}$). In fact, there are exactly six cuboids whose volumes were subtracted off twice by formula~\eqref{eq:det_formula_geometry_inc}, given by $S_2 = \{1,2,3,4\}$ and $S_1$ being any $2$-element subset of $S_2$. To correct this mistake, we simply add the volumes of these cuboids back in. The cuboids corresponding to $S_1 = \{1,2\}$ and $S_1 = \{3,4\}$ have the same volume as each other, as do the cuboids corresponding to $S_1 = \{1,3\}$ and $S_1 = \{2,4\}$, as do the cuboids corresponding to $S_1 = \{1,4\}$ and $S_1 = \{2,3\}$, thus giving the final three terms (each with a coefficient of $2$) in Equation~\eqref{eq:det_n4_formula}.

    In general, $k$ pairwise distinct (but not necessarily disjoint) subsets $S_1,S_2,\ldots,S_k \subseteq [n]$ are such that
    \begin{align}\label{eq:arb_intersection_S1S2}
        \bigcap_{j=1}^k \left(C_A + \sum_{i \in S_j}A\mathbf{e_i}\right)
    \end{align}
    has non-zero volume if and only if there is a chain of inclusions among them, which we will assume is $S_1 \subset S_2 \subset \cdots \subset S_k \subseteq [n]$ without loss of generality, and $|S_\ell \setminus S_{\ell-1}| \geq 2$ for all $1 \leq \ell \leq k$ (we define $S_0 = \emptyset$ for convenience). Indeed, if $|S_\ell \setminus S_{\ell-1}| = 0$ then $S_\ell = S_{\ell-1}$, and if $|S_\ell \setminus S_{\ell-1}| = 1$ then the two cuboids corresponding to $j = \ell-1$ and $j = \ell$ in Equation~\eqref{eq:arb_intersection_S1S2} intersect only on their boundary ($S_\ell \setminus S_{\ell-1} = \{i\}$ for some $i \in [n]$, so the cuboids are offset from each other by $a_{i,i}$ in the direction of the $i$-th coordinate axis, which is also their width in that direction).

    The volume of the cuboid described by the intersection in Equation~\eqref{eq:arb_intersection_S1S2} is the product of its side lengths, which equals
    \begin{align}\label{eq:arb_intersection_Sk}
        \prod_{i=1}^n\begin{cases}
            \displaystyle  - \ \ \ \smashoperator{\sum_{j \in S_\ell \setminus S_{\ell-1},j \neq i}} \ \ \ a_{i,j}, & \text{if $i \in S_\ell \setminus S_{\ell-1}$ ($1 \leq \ell \leq k$)} \\
            \displaystyle a_{i,i} + \sum_{j \in S_k}a_{i,j}, & \text{if $i \notin S_k$}.
        \end{cases}
    \end{align}
    The result now follows from using inclusion-exclusion, associating the chain $S_1 \subset S_2 \subset \cdots \subset S_k \subseteq [n]$ with the partial partition
    \[
        P = \{S_1, S_2 \setminus S_1, S_3 \setminus S_2, \ldots, S_k \setminus S_{k-1}\},
    \]
    and noting that there are exactly $k!$ different chains $S_1 \subset S_2 \subset \cdots \subset S_k \subseteq [n]$ that give rise to this particular partial partition (since the order of the parts in $P$ does not matter).\footnote{For example, this partial partition has $|P| = k$ and $\sgn(P) = (-1)^{|S_k|+k}$, with the $(-1)^k$ coming from the fact that $k$ specifies which level of the inclusion-exclusion we are at, and $(-1)^{|S_k|}$ coming from the fact that $|S_k|$ specifies how many terms in the product~\eqref{eq:arb_intersection_Sk} are in the top branch (with a negative sign). If $S_k = [n]$ then it is a (non-partial) partition, and the partial partition has no singletons because $|S_\ell \setminus S_{\ell-1}| \geq 2$ for all $1 \leq \ell \leq k$.} This completes the proof of Theorem~\ref{thm:main_formula}.
\end{proof}

\subsection{The Polytope \texorpdfstring{$\mathbf{F_A}$}{FA}}\label{sec:polytope_flip_graph}

For an $n \times n$ matrix $A$ satisfying the conditions (a)--(c), we defined $F_A$ in Equation~\eqref{eq:FA_defn} by subtracting translated copies of the cuboid $C_A$ from the original cuboid $C_A$. In this subsection, we study the polytope $F_A$ and its 1-skeleton (i.e., its vertices and $1$-dimensional edges between them; see Figure~\ref{fig:1skeleton}), obtaining a description of it as a flip graph associated with a family of combinatorial objects.

To start, we describe the combinatorial objects that will be the vertices of the graph. An \emph{ordered partial partition} (\emph{OPP}) on $[n]$ is defined to be a relation $\preccurlyeq$ with the following properties:
\begin{itemize}
    \item[(i)] Transitive: if $x \preccurlyeq y$ and $y \preccurlyeq z$, then $x \preccurlyeq z$.
    \item[(ii)] Weakly reflexive: if $x \preccurlyeq y$, then $x \preccurlyeq x$ and $y \preccurlyeq y$.
    \item[(iii)] Weakly connected: if $x \preccurlyeq x$ and $y \preccurlyeq y$, then either $x \preccurlyeq y$ or $y \preccurlyeq x$ or both.
\end{itemize}
The set of all ordered partial partitions on $[n]$ is denoted by $\OPP(n)$ and the number of them is enumerated in \cite{oeisA000629}.

The name ``ordered partial partition'' refers to the fact that these relations on $[n]$ correspond bijectively with (ordered, possibly empty) tuples of pairwise disjoint non-empty subsets of $[n]$ (i.e., partial partitions of $[n]$ in which we care about the order of the parts):
\[
    (X_1, X_2, \ldots, X_k),
\]
where $X_i \cap X_j = \emptyset$ whenever $i \neq j$. This tuple of sets corresponds to the original relation in the following way: $x \preccurlyeq y$ if and only if there exist integers $1 \leq i \leq j$ such that $x \in X_i$ and $y \in X_j$.

For each $z \in [n]$, we define a function $f_z : \OPP(n) \rightarrow \OPP(n)$ as follows:
\begin{itemize}
    \item[(1)] If $X_1 = \{z\}$ then discard $X_1$, obtaining the tuple $(X_2, \ldots, X_k)$.
    
    \item[(2)] If $X_j = \{z\}$ for some $j \geq 2$, then merge it into the immediately preceding part, obtaining the tuple
    \[
        (X_1, \ldots, X_{j-1} \cup \{z\}, X_{j+1}, \ldots, X_k).
    \]
    
    \item[(3)] If $z \in X_j \neq \{z\}$ for some $j \geq 1$, then split it off into its own immediately next part, obtaining the tuple
    \[
        (X_1, \ldots, X_j \setminus \{z\}, \{z\}, X_{j+1}, \ldots, X_k).
    \]
    
    \item[(4)] Otherwise, if $z \notin \bigcup_{j=1}^k X_j$, then make it into its own first part, obtaining the tuple
    \[
        (\{z\}, X_{1}, \ldots, X_k).
    \]
\end{itemize}
For each $z \in [n]$, this function $f_z$ is an involution on $\OPP(n)$ with no fixed points: if $f_z(\preccurlyeq) = \preccurlyeq^\prime$, then $f_z(\preccurlyeq^\prime) = \preccurlyeq$. Moreover, if $x,y \in [n]$ with $x \neq z$, then $x \preccurlyeq y$ if and only if $x \preccurlyeq' y$.

We form a graph (called a \emph{flip graph}) by letting its vertices be the elements of $\OPP(n)$ and its edges be between ordered partial partitions that are exchanged by an involution $f_z$ for some $z \in [n]$. The resulting flip graph is $n$-regular, since each vertex gets one edge for each $z \in [n]$; we claim that this graph is isomorphic to the 1-skeleton of the polytope $F_A$ (see Figure~\ref{fig:1skeleton}).

\begin{figure}[!htb]
	\centering
	\begin{subfigure}[b]{.45\textwidth}
		\centering
		\begin{tikzpicture}[scale=0.76]%
      
            \draw[draw=green3!75!black] (0,0) --node[anchor=north]{$1$} (5,0) --node[anchor=west]{$2$} (5,2) --node[anchor=north]{$1$} (3,2) --node[anchor=east]{$2$} (3,3) --node[anchor=south]{$1$} (0,3) --node[anchor=east]{$2$} cycle;

        	\fill[green3!25!black] (0,0) circle (5pt) node[black,anchor=north east]{$()$};
        	\fill[green3!25!black] (5,0) circle (5pt) node[black,anchor=north west]{$(\{1\})$};
        	\fill[green3!25!black] (5,2) circle (5pt) node[black,anchor=south west]{$(\{2\},\{1\})$};
        	\fill[green3!25!black] (3,2) circle (5pt) node[black,anchor=north east]{$(\{1,2\})$};
        	\fill[green3!25!black] (3,3) circle (5pt) node[black,anchor=south west]{$(\{1\},\{2\})$};
        	\fill[green3!25!black] (0,3) circle (5pt) node[black,anchor=south east]{$(\{2\})$};
    	\end{tikzpicture}
		\caption{The $1$-skeleton of the 2D polytope $F_A$ from Figure~\ref{fig:our_tiling}, or equivalently the flip graph of $\OPP(2)$. Edges are labelled by the $z$ for which the involution $f_z$ transforms the OPPs that it connects to each other.}\label{fig:1skeleton_2d}
	\end{subfigure} \hfill 
	\begin{subfigure}[b]{.51\textwidth}
		\centering
        \tdplotsetmaincoords{56}{123}
    	\begin{tikzpicture}[scale=1,tdplot_main_coords]
    		\draw[draw=green3!75!black,dashed] (0,0,0) -- (3,0,0);
    		\draw[draw=green3!75!black,dashed] (0,0,0) -- (0,3,0);
    		\draw[draw=green3!75!black,dashed] (0,0,0) -- (0,0,3);
        	\fill[green3!75!black] (0,0,0) circle (3pt);
         
            \draw[draw=green3!75!black] (3,0,0) -- (3,0,2) -- (3,1,2) -- (3,1,1) -- (3,2,1) -- (3,2,0) -- cycle;
            \draw[draw=green3!75!black] (0,3,0) -- (0,3,2) -- (1,3,2) -- (1,3,1) -- (2,3,1) -- (2,3,0) -- cycle;
            \draw[draw=green3!75!black] (0,0,3) -- (0,2,3) -- (1,2,3) -- (1,1,3) -- (2,1,3) -- (2,0,3) -- cycle;
            \draw[draw=green3!75!black] (2,0,2) -- (2,0,3) -- (2,1,3) -- (2,1,2) -- cycle;
            \draw[draw=green3!75!black] (3,0,2) -- (2,0,2) -- (2,1,2) -- (3,1,2) -- cycle;
            \draw[draw=green3!75!black] (0,2,2) -- (0,2,3) -- (1,2,3) -- (1,2,2) -- cycle;
            \draw[draw=green3!75!black] (0,3,2) -- (0,2,2) -- (1,2,2) -- (1,3,2) -- cycle;
            \draw[draw=green3!75!black] (3,2,0) -- (2,2,0) -- (2,2,1) -- (3,2,1) -- cycle;
            \draw[draw=green3!75!black] (2,2,0) -- (2,3,0) -- (2,3,1) -- (2,2,1) -- cycle;
            \draw[draw=green3!75!black] (3,1,1) -- (3,1,2) -- (2,1,2) -- (2,1,3) -- (1,1,3) -- (1,1,1) -- cycle;
            \draw[draw=green3!75!black] (1,3,1) -- (1,3,2) -- (1,2,2) -- (1,2,3) -- (1,1,3) -- (1,1,1) -- cycle;
            \draw[draw=green3!75!black] (3,1,1) -- (3,2,1) -- (2,2,1) -- (2,3,1) -- (1,3,1) -- (1,1,1) -- cycle;
            
        	\fill[draw=green3!25!black] (3,0,0) circle (3pt) node[black,anchor=north east]{$(\{1\})$};
        	\fill[draw=green3!25!black] (0,3,0) circle (3pt) node[black,anchor=north west]{$(\{2\})$};
        	\fill[draw=green3!25!black] (0,0,3) circle (3pt) node[black,anchor=south]{$(\{3\})$};
        	\fill[draw=green3!25!black] (3,0,2) circle (3pt) node[black,anchor=east]{$(\{3\},\{1\})$};
        	\fill[draw=green3!25!black] (3,1,2) circle (3pt);
        	\fill[draw=green3!25!black] (3,1,1) circle (3pt);
        	\fill[draw=green3!25!black] (3,2,1) circle (3pt);
        	\fill[draw=green3!25!black] (3,2,0) circle (3pt);
        	\fill[draw=green3!25!black] (0,3,2) circle (3pt) node[black,anchor=west]{$(\{3\},\{2\})$};
        	\fill[draw=green3!25!black] (0,2,2) circle (3pt) node[black,anchor=south west]{$(\{2,3\})$};
        	\fill[draw=green3!25!black] (1,3,2) circle (3pt);
        	\fill[draw=green3!25!black] (1,3,1) circle (3pt);
        	\fill[draw=green3!25!black] (2,3,1) circle (3pt);
        	\fill[draw=green3!25!black] (2,3,0) circle (3pt);
        	\fill[draw=green3!25!black] (0,2,3) circle (3pt) node[black,anchor=south west]{$(\{2\},\{3\})$};
        	\fill[draw=green3!25!black] (1,2,3) circle (3pt);
        	\fill[draw=green3!25!black] (1,1,3) circle (3pt);
        	\fill[draw=green3!25!black] (2,1,3) circle (3pt);
        	\fill[draw=green3!25!black] (2,0,3) circle (3pt) node[black,anchor=south east]{$(\{1\},\{3\})$};
        	\fill[draw=green3!25!black] (1,1,1) circle (3pt);
        	\fill[draw=green3!25!black] (1,2,2) circle (3pt);
        	\fill[draw=green3!25!black] (2,2,0) circle (3pt);
        	\fill[draw=green3!25!black] (2,2,1) circle (3pt);
        	\fill[draw=green3!25!black] (2,0,2) circle (3pt) node[black,anchor=south east]{$(\{1,3\})$};
        	\fill[draw=green3!25!black] (2,1,2) circle (3pt);
    	\end{tikzpicture}
		\caption{The $1$-skeleton of the 3D polytope $F_B$ from Figure~\ref{fig:canon_matrix_3d}, or equivalently the flip graph of $\OPP(3)$. Not all vertices are labelled; the central vertex at the front is $(\{1,2,3\})$ while the central vertex at the back is $()$.}\label{fig:1skeleton_3d}
	\end{subfigure}
	
	\caption{The $1$-skeleton of a polytope is the set of its vertices and $1$-dimensional edges between them. Propositions~\ref{prop:FA_vertices} and~\ref{prop:FA_edges} show that the $1$-skeleton of the polytope $F_A$ is isomorphic to a graph whose vertices are the ordered partial partitions on $[n]$ and whose edges are described by the involutions $f_z$.}\label{fig:1skeleton}
\end{figure}
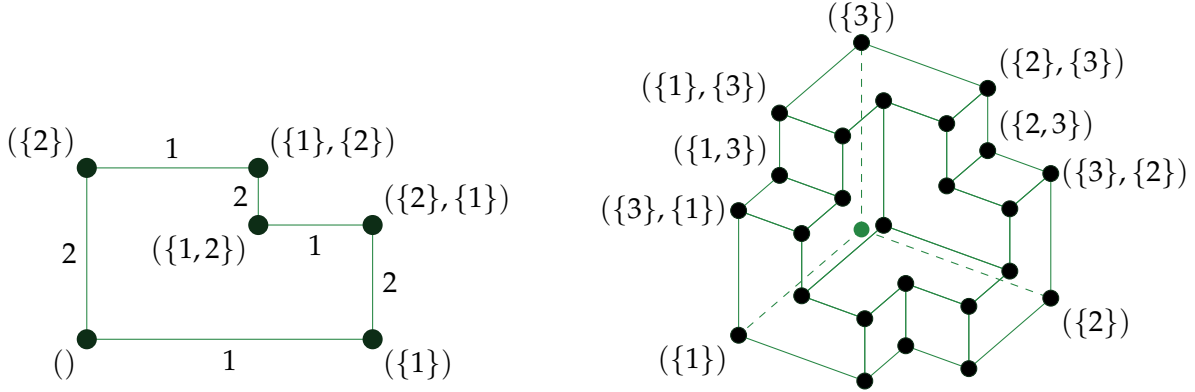

\begin{proposition}\label{prop:FA_vertices}
    Suppose that $A$ is an $n \times n$ matrix satisfying conditions (a)--(c). Then the vertices of $F_A$ are in bijective correspondence with the elements of $\OPP(n)$. In particular, the coordinates of the vertex $\mathbf{v}$ corresponding to the ordered partial partition $\preccurlyeq$ are given by
    \[
        v_i = \sum_{j : i \preccurlyeq j} a_{i,j} \quad \text{for all} \quad 1 \leq i \leq n.
    \]
\end{proposition}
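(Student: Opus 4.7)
Plan: Write the OPP $\preccurlyeq$ as the tuple $(X_1, \ldots, X_k)$, and set $S_\ell := X_1 \cup \cdots \cup X_\ell$ (with $S_0 := \emptyset$) and $U_\ell := S_k \setminus S_{\ell-1}$ for each $\ell \in [k]$. The defining formula then simplifies to $v_i = 0$ when $i \notin S_k$, and $v_i = \sum_{j \in U_\ell} a_{i,j}$ when $i \in X_\ell$, because $\{j : i \preccurlyeq j\}$ is exactly $U_\ell$ in the latter case. The proof reduces to four tasks: (i) $\Phi(\preccurlyeq) \in F_A$; (ii) $\Phi(\preccurlyeq)$ is a vertex of $F_A$; (iii) $\Phi$ is injective; (iv) every vertex of $F_A$ lies in the image of $\Phi$.

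To establish (ii), I exhibit $n$ axis-aligned facet hyperplanes of $F_A$ meeting at $\mathbf{v} := \Phi(\preccurlyeq)$: for each $i \notin S_k$ the hyperplane $x_i = 0$ (a face of $C_A$), and for each $i \in X_\ell$ the hyperplane $x_i = \sum_{j \in U_\ell} a_{i,j}$, which is the inward-facing $i$-th face of the removed cuboid $C_A + \sum_{j \in U_\ell} A\mathbf{e_j}$. These are well defined (since $i \in U_\ell$, and this face lies strictly between $0$ and $a_{i,i}$ by condition~(c)) and linearly independent, intersecting uniquely at $\mathbf{v}$. Task~(i) is verified by checking $0 \leq v_i \leq a_{i,i}$ using strict diagonal dominance, and showing that for each non-empty $S \subseteq [n]$ the point $\mathbf{v}$ is not in the interior of the translated cuboid $C_A + \sum_{j \in S} A\mathbf{e_j}$: when $S \not\subseteq S_k$, any index $i \in S \setminus S_k$ witnesses the failure, since $v_i = 0$ lies strictly below $a_{i,i} + \sum_{j \in S, j \neq i} a_{i,j}$, which is positive by diagonal dominance; and when $S \subseteq S_k$, picking $i \in S \cap X_\ell$ for the smallest $\ell$ with $X_\ell \cap S \neq \emptyset$ yields $S \subseteq U_\ell$, and the required lower bound on $v_i$ is then met either with equality (if $S = U_\ell$) or strictly violated (if $S \subsetneq U_\ell$). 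Claim~(iii) follows by a reconstruction argument: $S_k = \{i : v_i \neq 0\}$, and then $X_1$, then $X_2$, etc., are read off one at a time from the formula.

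The main obstacle is (iv). Any vertex $\mathbf{v}$ of $F_A$ is a transverse intersection of $n$ axis-aligned facet hyperplanes, each of the form $x_i = 0$ or $x_i = \sum_{j \in S(i)} a_{i,j}$ with $i \in S(i)$ (the special case $S(i) = \{i\}$ gives the $x_i = a_{i,i}$ face of $C_A$), chosen so that $\mathbf{v}$ lies in the closure of the removed cuboid $C_A + \sum_{j \in S(i)} A\mathbf{e_j}$. The crucial step is to show that the collection $\{S(i) : v_i \neq 0\}$ is totally ordered by inclusion, thus forming a descending chain $T_1 \supsetneq T_2 \supsetneq \cdots \supsetneq T_m$ with $T_1 = \{i : v_i \neq 0\}$. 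The plan is to argue that any non-nested pair $S(i_1), S(i_2)$ would force $\mathbf{v}$ into the interior of some removed cuboid (built from $S(i_1)$ and $S(i_2)$, such as their union or intersection), contradicting $\mathbf{v} \in F_A$. Once this chain is extracted, the ordered partial partition $(T_1 \setminus T_2, T_2 \setminus T_3, \ldots, T_{m-1} \setminus T_m, T_m)$ maps to $\mathbf{v}$ under $\Phi$, completing the bijection.
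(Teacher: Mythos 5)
Your plan attempts a direct verification that the map $\Phi$ from $\OPP(n)$ to $\mathbb{R}^n$ is a bijection onto the vertex set of $F_A$, which is a genuinely different route from the paper's. The paper instead rewrites the cuboid $C_A$ as a difference of translated positive orthants, observes that each facet of $F_A$ therefore lies in a facet of one translated orthant, and then identifies the vertices with the intersections already classified (as chains of subsets) in the proof of Lemma~\ref{lem:FA_volume}. That reduction does the heavy lifting that your plan still leaves open.

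Tasks~(i) and~(iii) of your proposal are essentially sound as sketched, but the argument has two real gaps. First, in task~(ii) you assert that the $n$ hyperplanes you exhibit are \emph{facet} hyperplanes of $F_A$ and that their common point is a vertex; neither claim follows just from the hyperplanes passing through $\mathbf{v}$. You must show that each hyperplane carries a codimension-one face of $F_A$ whose closure contains $\mathbf{v}$ and that $F_A$ lies locally on a single side of it, which you have not done. (Also, the parenthetical ``lies strictly between $0$ and $a_{i,i}$'' fails when $U_\ell = \{i\}$, a case you elsewhere acknowledge gives $x_i = a_{i,i}$.) Second, and more seriously, task~(iv) is explicitly a ``plan,'' not a proof. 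The claim that every facet of $F_A$ perpendicular to $\mathbf{e_i}$ has the form $x_i = 0$ or $x_i = \sum_{j \in S(i)} a_{i,j}$ with $i \in S(i)$ requires an argument: \emph{a priori} the upper face $x_i = a_{i,i} + \sum_{j \in S} a_{i,j}$ of a removed cuboid with $i \notin S$ also lands inside $(0, a_{i,i})$. The paper's orthant decomposition dissolves this issue automatically (such a plane is the bottom face of the orthant translated by $\sum_{j \in S \cup \{i\}} A\mathbf{e_j}$), but in your cuboid framework it must be addressed. Finally, the ``crucial step'' that the family $\{S(i) : v_i \neq 0\}$ is totally ordered by inclusion is not established, and the suggested technique of using the union or the intersection of a non-nested pair $S(i_1), S(i_2)$ does not straightforwardly place $\mathbf{v}$ in the interior of the corresponding cuboid: checking the coordinate inequalities shows the union gives the right lower bounds but not obviously the upper ones, and the intersection the reverse. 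Some additional idea is needed here, and it is not supplied.

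In short, your approach is legitimate in outline but substantially less complete than the paper's, which sidesteps both difficulties by reducing to translated positive orthants and reusing the chain-of-subsets classification from the volume computation.
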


\begin{proof}
    Recall that we originally defined $F_A$ by subtracting translated copies of the cuboid $C_A$:
    \begin{align}\label{eq:FA_defn_again}
        F_A := \overline{C_A \setminus \bigcup_{\emptyset \neq S \subseteq [n]}\left\{ x + \sum_{i \in S} A \mathbf{e_i} : x \in C_A \right\} }.
    \end{align}
    However, $C_A$ can itself be written as
    \[
        C_A = \overline{P \setminus \bigcup_{i \in [n]} \big\{ x + A \mathbf{e_i} : x \in P \big\}},
    \]
    where $P$ is the positive orthant, consisting of all vectors where every coordinate is non-negative. As such, the definition~\eqref{eq:FA_defn_again} of $F_A$ still works even if we replace each instance of $C_A$ with $P$.

    Given a vertex $\mathbf{v}$ of $F_A$, observe that it is the intersection of $n$ facets (codimension-1 faces) $f_1, f_2, \dots, f_n$ of $F_A$, where the standard basis vector $\mathbf{e_i}$ is perpendicular to the facet $f_i$. The facet $f_i$ must be contained in a facet of one of the translates of $P$ used to construct $F_A$; the vertex $\mathbf{v}$ is then the vertex of the intersection of these $n$ (not necessarily distinct) translates of the positive orthant $P$.

    Each vertex of $F_A$ that arises therefore corresponds to the vertex of an intersection of translated positive orthants, which is the lowest (in terms of each coordinate) vertex of the corresponding intersection of translated copies of $C_A$. We already characterised these intersections in the proof of Lemma~\ref{lem:FA_volume} when we performed the inclusion-exclusion calculation of the volume of $F_A$, identifying them with chains of subsets of $[n]$, which are in turn naturally identified with ordered partial partitions.\footnote{In the proof of Lemma~\ref{lem:FA_volume} we ignored the partial partitions containing singletons $\{i\}$, because they give measure-zero intersections. Here, however, they do still need to be included, because they correspond to vertices on a boundary face of the cuboid $C_A$ (specifically those vertices where $v_i = a_{i,i}$).}
\end{proof}

\begin{proposition}\label{prop:FA_edges}
    Let $A$ be an $n \times n$ matrix satisfying conditions (a)--(c) and let $1 \leq i \leq n$. Then two vertices $\mathbf{v}$ and $\mathbf{w}$ of $F_A$ are connected by an edge parallel to the standard basis vector $\mathbf{e_i}$ if and only if the corresponding (in the sense of Proposition~\ref{prop:FA_vertices}) elements of $\OPP(n)$ are swapped by the involution $f_i$.
\end{proposition}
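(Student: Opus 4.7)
The plan is to verify directly from Proposition~\ref{prop:FA_vertices} that the involution $f_i$ alters only the $i$-th coordinate of the associated vertex, and then to deduce the edge correspondence by counting (since both $F_A$ and the flip graph are $n$-regular). First I would perform a case analysis on the four rules defining $f_i$: discarding $\{i\}$ as the first part, merging $\{i\}$ into the preceding part, splitting $\{i\}$ off from its current part into a new following part, or prepending $\{i\}$ as a new first part. In each case one checks that for any $k \neq i$ and any $j \in [n]$, the relation $k \preccurlyeq j$ holds if and only if $k \preccurlyeq' j$ holds (where $\preccurlyeq' = f_i(\preccurlyeq)$), since each rule leaves the relative order among parts not containing $i$ intact. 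The formula $v_k = \sum_{j : k \preccurlyeq j} a_{k,j}$ then immediately gives $v_k = w_k$ for every $k \neq i$, where $\mathbf{v}$ and $\mathbf{w}$ denote the vertices associated to $\preccurlyeq$ and $f_i(\preccurlyeq)$. A short sign computation using conditions (a)--(c) shows $v_i \neq w_i$, so $\mathbf{v}$ and $\mathbf{w}$ are distinct and lie on a line parallel to $\mathbf{e_i}$.

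Next I would argue that at each vertex of $F_A$ exactly $n$ facets meet, one perpendicular to each coordinate axis; this can be read off from the chain-of-subsets description extracted in the proof of Proposition~\ref{prop:FA_vertices}, since the ``bottom-left'' vertex of the intersection of translated positive orthants is cut out by exactly one defining hyperplane in each coordinate direction. Hence each vertex of $F_A$ has exactly $n$ incident edges, one parallel to each $\mathbf{e_j}$, giving $n|\OPP(n)|/2$ edges in $F_A$ altogether. On the combinatorial side, each $f_j$ is a fixed-point-free involution on $\OPP(n)$ and so contributes $|\OPP(n)|/2$ pairs, also totalling $n|\OPP(n)|/2$. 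So the counts match, and to finish we need only verify that each $f_i$-pair genuinely corresponds to an adjacent pair in the $1$-skeleton.

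The hard part is ruling out a third vertex of $F_A$ lying strictly between $\mathbf{v}$ and $\mathbf{w}$ on the $\mathbf{e_i}$-line they span. I would handle this by computing $w_i - v_i$ explicitly in each of the four cases of $f_i$ (for instance, in the split case $w_i - v_i = -\sum_{k \in X_j \setminus \{i\}} a_{i,k}$, which is positive under conditions (a)--(c)) and using the explicit description of $F_A$ as $C_A$ with translated cuboids removed. The $n - 1$ shared coordinates of $\mathbf{v}$ and $\mathbf{w}$ force the open segment from $\mathbf{v}$ to $\mathbf{w}$ to lie on the common intersection of $n-1$ facets perpendicular to the axes $\mathbf{e_j}$ with $j \neq i$, which is a single relative-interior of a $1$-dimensional boundary piece of $F_A$ and therefore contains no other vertices. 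This forces the unique edge from $\mathbf{v}$ in the appropriate $\pm\mathbf{e_i}$ direction to terminate at $\mathbf{w}$, completing the proof.
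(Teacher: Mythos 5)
Your outline correctly identifies the easy parts but has a genuine gap exactly where you flag ``the hard part.'' The first stage---that $f_i$ changes only coordinate $i$ (so $v_k = w_k$ for $k \neq i$ and, by a sign check under conditions (a)--(c), $v_i \neq w_i$)---is sound, and it mirrors a remark the paper makes about $f_z$ just before Figure~\ref{fig:1skeleton}. The counting observation that both the $1$-skeleton of $F_A$ and the flip graph of $\OPP(n)$ are $n$-regular (each with $n\,|\OPP(n)|/2$ edges) is also correct. But matching edge counts between two graphs on the same vertex set does not establish that the graphs are equal; you still need one inclusion, and that is precisely what the ``hard part'' must deliver.

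The gap is in the assertion that ``the $n-1$ shared coordinates of $\mathbf{v}$ and $\mathbf{w}$ force the open segment from $\mathbf{v}$ to $\mathbf{w}$ to lie on the common intersection of $n-1$ facets \ldots which is a single relative-interior of a $1$-dimensional boundary piece of $F_A$.'' This is assuming the conclusion. $F_A$ is a non-convex orthogonal polytope, so two vertices sharing $n-1$ coordinates need not be joined by an edge: the facet of $F_A$ normal to $\mathbf{e_j}$ through $\mathbf{v}$ and the one through $\mathbf{w}$ could be different facets at the same height $x_j = v_j$, or the common facet could be non-convex so that the straight segment exits it. Nothing in your outline rules out the segment leaving $F_A$ and returning. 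What is actually needed is the \emph{uniqueness} statement that the paper proves instead: given the vertex $\mathbf{v}$, the OPP $f_i(\preccurlyeq)$ is the \emph{only} other element of $\OPP(n)$ whose vertex agrees with $\mathbf{v}$ on all coordinates $k \neq i$. (This follows because Equation~\eqref{eq:ordering_eq} for all $k \neq i$ pins down both the restriction of the OPP to $[n]\setminus\{i\}$ and the set $\{k \neq i : k \preccurlyeq i\}$, after which there are exactly two consistent placements of $i$---as a singleton in a forced position, or not as a singleton---and $f_i$ swaps them.) Once you have this uniqueness, the degree-$n$ property you already invoked forces the $n$ edges at $\mathbf{v}$ to go to $f_1(\preccurlyeq), \dots, f_n(\preccurlyeq)$, with no geometric argument about segments required. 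Replacing your geometric ``hard part'' with this combinatorial uniqueness claim would close the gap and essentially reproduce the paper's proof.
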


\begin{proof}
    Combinatorially, the polytope $F_A$ does not depend on the choice of matrix $A$ (besides the fact that $A$ satisfies conditions (a)--(c)), so suppose without loss of generality that the entries are linearly independent over $\mathbb{Q}$.
    
    Then we have that $v_k = w_k$ if and only if $\sum_{j : k \preccurlyeq j} a_{k,j} = \sum_{j : k \preccurlyeq' j} a_{k,j}$ if and only if
    \begin{align}\label{eq:ordering_eq}
        \{j : k \preccurlyeq j\} = \{j : k \preccurlyeq' j\}.
    \end{align}
    Consequently, two vertices lie on a line parallel to the basis vector $\mathbf{e_i}$ if and only if Equation~\eqref{eq:ordering_eq} is true for all $k \neq i$.
    
    This implies that the ordered partial partitions induced on $[n] \setminus \{i\}$ are equal, so the original ordered partial partitions can only differ in terms of where $i$ is located. It also constrains the position of $i$ relative to the other parts, namely \begin{align}\label{eq:ordering_eq_i}
        \{k \neq i : k \preccurlyeq i\} = \{k \neq i : k \preccurlyeq' i\}.
    \end{align}
    
    If we remove and reinsert $i$, there are two possibilities: it is either introduced as its own singleton part, or it is not (either by being introduced into an existing part, or being deleted completely). In each of these two cases, the position of $i$ is determined by Equation~\eqref{eq:ordering_eq_i}. One of these cases corresponds to $\preccurlyeq^\prime = \preccurlyeq$, while the other corresponds to $\preccurlyeq^\prime = f_i(\preccurlyeq)$.
    
    Given that $F_A$ is an orthogonal polytope, it follows that there are at most $n$ other vertices that can possibly be connected to a given vertex $\mathbf{v}$ by an edge, namely the vertices obtained by applying $f_i$ to the corresponding element of $\OPP(n)$. Moreover, every vertex in an orthogonal polytope must have degree $n$, so all of these edges exist. The result follows.
\end{proof}

We can label the edges $\{\preccurlyeq, \preccurlyeq^\prime\}$ of the flip graph, as in Figure~\ref{fig:1skeleton_2d}, with the value of $i \in [n]$ for which $\preccurlyeq^\prime = f_i(\preccurlyeq)$. The higher-dimensional faces of $F_A$ then have concise descriptions in terms of this graph: the $r$-dimensional faces parallel to the linear subspace generated by the basis vectors $\{ \mathbf{e_i} : i \in S \}$ (where $S \subseteq [n]$ has $|S| = r$) are precisely the connected components of the subgraph obtained by taking only the edges whose labels are in $S$.

This description of $F_A$ by specifying its vertex coordinates is more general than the original definition in terms of subtracting translated copies of $C_A$: it generalises to arbitrary square matrices $A$, instead of requiring that $A$ satisfy the conditions (a)--(c). Note, however, that the resulting polytope may self-intersect and thus care is required to define ``volume'' in such a way that it is equal to $\det(A)$.

\section{Fields of Non-Zero Characteristic}\label{sec:fields_non_zero_char}

For fields $\F$ of characteristic $p > 0$, many of the terms in the formula~\eqref{eq:main_formula} are multiples of $p$ and thus equal to zero. As a result, the formula simplifies and we get an even tighter upper bound on the tensor rank of $\det_{\F}^n$. In particular, we have the following variant of Theorem~\ref{thm:main_formula}:

\begin{theorem}\label{thm:main_formula_nonzero}
    Let $A$ be an $n \times n$ matrix over a field $\F$ with characteristic $p > 0$. Then
    \begin{align}\label{eq:main_formula_nonzero}
        \det(A) & = \ \smashoperator{\sum_{\substack{P \in \PP(n) \\ |P| \leq p-1}}} \ \ \sgn(P) |P|! \prod_{i=1}^n \begin{cases}\displaystyle \ \ \sum_{j \underset{P}{\sim} i, j \neq i} a_{i,j} & \textrm{ if $i \underset{P}{\sim} i$;} \\ \displaystyle a_{i,i} + \sum_{j \underset{P}{\sim} j}a_{i,j} & \textrm{ if $i \underset{P}{\not\sim} i$,}\end{cases}
    \end{align}
    where $\displaystyle\sgn(P) = \prod_{S \in P}(-1)^{|S|+1}$.
\end{theorem}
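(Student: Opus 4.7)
The proof should be essentially a one-line corollary of Theorem~\ref{thm:main_formula}, so the proposal is short. The plan is to start from the field-independent formula~\eqref{eq:main_formula} and observe that, working over $\F$ with $\operatorname{char}(\F) = p > 0$, the coefficient $|P|!$ vanishes as soon as $|P| \geq p$, because $|P|!$ then contains $p$ as a factor and hence equals $0$ in $\F$. Consequently every term of~\eqref{eq:main_formula} indexed by a partial partition with at least $p$ parts drops out of the sum, leaving exactly the terms indexed by $P \in \PP(n)$ with $|P| \leq p-1$, which is the statement of Theorem~\ref{thm:main_formula_nonzero}.

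More precisely, I would write: by Theorem~\ref{thm:main_formula}, the equality~\eqref{eq:main_formula} holds as an identity of polynomials with integer coefficients in the entries $a_{i,j}$, so it holds in $\F$ as well after reducing the integer coefficients modulo $p$. For any $P \in \PP(n)$ with $|P| \geq p$, the integer $|P|!$ is divisible by $p$ and so becomes zero in $\F$; hence the corresponding term contributes nothing to the sum. Restricting the sum to those $P$ with $|P| \leq p-1$ therefore produces the same value, which establishes~\eqref{eq:main_formula_nonzero}.

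There is no real obstacle here; the only thing to double-check is that the restriction $|P| \leq p - 1$ still allows the empty partial partition (which contributes the diagonal product $a_{1,1}a_{2,2}\cdots a_{n,n}$), and indeed $|P| = 0 \leq p - 1$ for every $p \geq 1$, so nothing is lost. No changes to the sign conventions or to the definition of $\underset{P}{\sim}$ are needed, since the identity is derived purely by zeroing-out certain terms whose combinatorial data is unchanged.
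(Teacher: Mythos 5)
Your proposal is correct and matches the paper's proof exactly: both arguments start from Theorem~\ref{thm:main_formula} and observe that the coefficient $|P|!$ vanishes in characteristic $p$ once $|P| \geq p$, so those terms may be dropped from the sum. The extra remarks you include (that the identity holds over $\mathbb{Z}$ and reduces mod $p$, and that the empty partial partition survives the restriction) are harmless elaborations of the same one-line argument.
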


\begin{proof}
    This formula is identical to the one provided by Theorem~\ref{thm:main_formula}, except the partial partitions $P$ that we sum over here are restricted to have $|P| \leq p-1$. Since any term with $|P| \geq p$ has a coefficient of $|P|!$, which is a multiple of $p$, which equals $0$ in $\F$, this restriction on $|P|$ does not change the value of the sum.
\end{proof}

In order to count the number of non-zero terms in the sum~\eqref{eq:main_formula_nonzero}, and thus obtain a tighter an upper bound on $\rank(\detnF)$ when $\F$ has characteristic $p > 0$, we need to introduce another combinatorial quantity. Let $B_{n,k}$ be the number of partial partitions of $[n]$ that contain exactly $k$ parts and no singletons, or equivalently the number of partitions of $[n]$ that contain exactly $k$ parts all of which have size strictly greater than $1$. The sum in Equation~\eqref{eq:main_formula_nonzero} contains exactly $\sum_{k=0}^{p-1}B_{n,k}$ (potentially) non-zero terms, so we immediately get the following corollary:

\begin{corollary}\label{cor:tensor_rank_field}
    Let $\F$ be a field with characteristic $p > 0$. Then $\displaystyle\rank(\detnF) \leq \sum_{k=0}^{p-1}B_{n,k}$.
\end{corollary}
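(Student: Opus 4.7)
The plan is to read off the tensor decomposition directly from the formula in Theorem~\ref{thm:main_formula_nonzero} and then count its non-zero summands, exactly as was done immediately after the statement of Theorem~\ref{thm:main_formula} for the field-independent bound in Corollary~\ref{cor:tensor_rank}(a).

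First I would apply the translation from determinant formulas to tensor decompositions described in Section~\ref{sec:tensor}: replace each occurrence of $a_{i,j}$ in \eqref{eq:main_formula_nonzero} with the standard basis vector $\mathbf{e_j}$ placed in the $i$-th tensor factor. Since each factor of the product in \eqref{eq:main_formula_nonzero} is a linear combination of entries from a single row of $A$, the resulting expression is a sum of rank-one tensors of the form $\sgn(P)|P|! \cdot \mathbf{v}_{P,1} \otimes \cdots \otimes \mathbf{v}_{P,n}$, one for each partial partition $P \in \PP(n)$ with $|P| \leq p-1$. This already gives an upper bound on $\rank(\detnF)$ equal to the number of such partial partitions.

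Next I would discard the partial partitions that contribute a zero tensor. As observed just after Theorem~\ref{thm:main_formula}, if $P$ contains a singleton part $\{i\}$, then $i \underset{P}{\sim} i$ but no $j \neq i$ satisfies $j \underset{P}{\sim} i$, so the $i$-th factor is an empty sum and the whole term vanishes. Hence only partial partitions $P$ with $|P| \leq p-1$ and no singleton parts contribute non-trivial rank-one summands.

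Finally, by the definition of $B_{n,k}$ as the number of partial partitions of $[n]$ with exactly $k$ parts and no singletons, the total number of non-zero rank-one summands is $\sum_{k=0}^{p-1} B_{n,k}$, yielding the claimed bound. There is no genuine obstacle here; the only step requiring any care is making sure that terms with singleton parts are excluded from the count (and not double-counted by the $k=0$ empty partial partition, which is singleton-free and contributes the diagonal term $a_{1,1}a_{2,2}\cdots a_{n,n}$).
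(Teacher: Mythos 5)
Your proof is correct and matches the paper's argument: both read off a rank-one tensor decomposition from the formula in Theorem~\ref{thm:main_formula_nonzero}, discard the vanishing terms coming from partial partitions with singleton parts, and count the remaining non-zero summands using the definition of $B_{n,k}$.
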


By making use of Lemma~\ref{lem:TrankWrank_bound}, we could also say that if $\F$ has characteristic $p > n$ then $\Wrank(\detnF) \leq 2^{n-1}\sum_{k=0}^{p-1}B_{n,k}$. However, this provides no improvement over Corollary~\ref{cor:tensor_rank}, since $\sum_{k=0}^{p-1}B_{n,k} = B_n$ whenever $p > n$ (in fact, whenever $p > \lfloor n/2 \rfloor + 1$, since no partial partition can contain more than $\lfloor n/2 \rfloor$ parts unless it has some singletons).

Numerous properties and formulas for $B_{n,k}$ are known (see \cite{oeisA124324,NRB20} and the references therein). We summarize those that are most relevant for us here:

\begin{itemize}
    \item[(i)] $B_{n,0} = 1$.
    
    \item[(ii)] $B_{n,1} = \binom{n}{2} + \binom{n}{3} + \cdots + \binom{n}{n} = 2^n - n - 1$, since $\binom{n}{k}$ counts the number of partial partitions of $[n]$ with one part of size $k$. It follows that if $\F$ has characteristic $2$ then $\rank(\detnF) \leq B_{n,0} + B_{n,1} = 2^n - n$ (we return to this special case in more detail in Section~\ref{sec:permanent}).

    \item[(iii)] The recurrence relation $B_{n,k} = (k+1)B_{n-1,k} + (n-1)B_{n-2,k-1}$ holds whenever $n \geq 2k \geq 2$. When combined with the facts that $B_{n,0} = 1$ for all $n \geq 0$ and $B_{n,k} = 0$ whenever $n < 2k$, this recurrence relation can be used to compute $B_{n,k}$ for all $n$ and $k$.
    
    \item[(iv)] In particular, for small values of $k$ we have the following additional explicit formulas:
    \begin{align*}
        B_{n,2} & = \frac{1}{2}\big(3^n - (n+2)2^n + (n^2 + n + 1)\big), \\
        B_{n,3} & = \frac{1}{6}\big(4^n - (n+3)3^{n} + 3(n^2 + 3n + 4)2^{n-2} - (n^3 + 2n + 1)\big), \quad \text{and} \\
        B_{n,4} & = \frac{1}{24}\big(5^n - (n + 4)4^n + 2(n^2 + 5n + 9)3^{n-1} \\
        & \qquad \quad - (n^3 + 3n^2 + 8n + 8)2^{n-1} + (n^4 - 2n^3 + 5n^2 + 1)\big).
    \end{align*}
    
    \item[(v)] $B_{n,k} \leq (k+1)^n/k!$ (furthermore, for fixed $k$ we have $B_{n,k} \sim (k+1)^n/k!$, so this inequality is not too lossy). To verify this inequality, consider the following function $f$ from the set of $P \in \OPP(n)$\footnote{Recall that $\OPP(n)$ is the set of ordered partial partitions, defined in Section~\ref{sec:polytope_flip_graph}.} in which there are exactly $k$ parts and no singletons to the set $\{0,1,2,\ldots,k\}^n$:
    \begin{align*}
        f(P) := \; & (f_1(P), f_2(P), \ldots, f_n(P)), \quad \text{where} \\
        f_i(P) \; = \; & \begin{cases}
            0 & \text{if $i \underset{P}{\not\sim} i$}, \\
            j & \text{if $i$ is in the $j$-th part of $P$.}
        \end{cases}
    \end{align*}
    Since $f(P)$ completely specifies the ordered partial partition $P$, $f$ is injective, so there are no more than $|\{0,1,2,\ldots,k\}^n| = (k+1)^n$ different $P \in \OPP(n)$ with exactly $k$ parts and no singletons. If we then forget about the ordering of those $k$ parts, we see that there are no more than $(k+1)^n/k!$ members of $\PP(n)$ with exactly $k$ parts and no singletons, so $B_{n,k} \leq (k+1)^n/k!$.
\end{itemize}

In summary, all of the above observations and formulas, when combined with Corollary~\ref{cor:tensor_rank_field}, lead to the following (slightly weaker when $p \geq 7$, but much easier to evaluate and work with) corollary:

\begin{corollary}\label{cor:tensor_rank_field_simple}
    Let $\F$ be a field with characteristic $p > 0$.
    \begin{itemize}
        \item[(a)] If $p = 2$ then $\displaystyle \rank(\detnF) \leq 2^n - n$.
        
        \item[(b)] If $p = 3$ then $\displaystyle \rank(\detnF) \leq \frac{1}{2}\big(3^n - n2^n + (n^2 - n + 1)\big)$.
        
        \item[(c)] If $p = 5$ then $\displaystyle \rank(\detnF) \leq \frac{1}{24}\big(5^n - n4^n + 2(n^2 - n + 9)3^{n-1} - (n^3 - 3n^2 + 14n - 16)2^{n-1}$\\${}$\qquad\qquad\qquad\qquad\qquad\qquad\qquad $\displaystyle + (n^4 - 6n^3 + 17n^2 - 20n + 9)\big)$.
        
        \item[(d)] In general, $\displaystyle\rank(\detnF) \leq \sum_{k=1}^p\frac{k^n}{(k-1)!}$. In particular, $\displaystyle\rank(\detnF) \in O(p^n)$.
    \end{itemize}
\end{corollary}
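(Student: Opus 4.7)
The proposal is to obtain each item by plugging the known formulas for $B_{n,k}$ (listed in the bullet points immediately preceding the corollary) into the bound $\rank(\detnF) \leq \sum_{k=0}^{p-1} B_{n,k}$ given by Corollary~\ref{cor:tensor_rank_field}, and to obtain the last (general $p$) item by invoking the bound $B_{n,k} \leq (k+1)^n/k!$ and re-indexing the sum.

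First, for $p = 2$ I would write
\[
    \sum_{k=0}^{1} B_{n,k} = B_{n,0} + B_{n,1} = 1 + (2^n - n - 1) = 2^n - n,
\]
which is immediate from the stated values of $B_{n,0}$ and $B_{n,1}$. For $p = 3$, I would add the closed form for $B_{n,2}$ to the $p = 2$ computation and simplify:
\[
    \sum_{k=0}^{2} B_{n,k} = (2^n - n) + \tfrac{1}{2}\bigl(3^n - (n+2)2^n + n^2 + n + 1\bigr),
\]
and gathering the $2^n$ terms collapses the $(n+2)2^n$ against $2\cdot 2^n$ to give $-n\cdot 2^n$, and gathering the constants turns $-2n + n^2 + n + 1$ into $n^2 - n + 1$, yielding the stated expression. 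For $p = 5$ the computation is the same in spirit: add the closed forms for $B_{n,0}, \ldots, B_{n,4}$ and collect like powers of $2, 3, 4, 5$ and polynomial terms in $n$. This is bookkeeping only; no new idea is needed, but it is the step I expect to be most error-prone, so I would carry it out by grouping terms one exponential base at a time and cross-checking the resulting polynomial coefficients at small $n$ (e.g. $n = 4, 5, 6$).

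For the general bound, I would apply $B_{n,k} \leq (k+1)^n/k!$ termwise to the conclusion of Corollary~\ref{cor:tensor_rank_field}:
\[
    \rank(\detnF) \;\leq\; \sum_{k=0}^{p-1} B_{n,k} \;\leq\; \sum_{k=0}^{p-1} \frac{(k+1)^n}{k!}.
\]
Substituting $j = k+1$ rewrites this as $\sum_{j=1}^{p} j^n/(j-1)!$, which is the claimed expression. Finally, to see that this is $O(p^n)$, I would bound the sum by its largest term times $p$: each summand $k^n/(k-1)! \leq p^n/(k-1)!$, so the whole sum is at most $p^n \sum_{k=1}^{p} 1/(k-1)! \leq e\cdot p^n$, which establishes the asymptotic claim.

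The only potentially delicate step is the algebraic simplification in the $p = 5$ case, since multiple exponentials and polynomial corrections need to be collected simultaneously; the other parts are direct substitutions and a one-line reindexing.
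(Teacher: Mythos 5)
Your proposal is correct and matches the paper's argument exactly: the paper derives Corollary~\ref{cor:tensor_rank_field_simple} by substituting the listed closed forms and bound for $B_{n,k}$ into Corollary~\ref{cor:tensor_rank_field}, which is precisely what you do. Your $p=3$ simplification and the reindexing $j=k+1$ for the general bound are both right, and the $O(p^n)$ step via $\sum_{k=1}^p 1/(k-1)! \leq e$ is a clean way to land the asymptotic claim.
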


\subsection{Rank of the Permanent Tensor}\label{sec:permanent}

Glynn's formula~\eqref{eq:perm_glynn} shows that the tensor rank of the permanent tensor is at most $2^{n-1}$, as long as the field does not have characteristic $2$. When the characteristic is $2$, the permanent and determinant tensors are the same, and the best known upper bound on their rank is now $2^n - n$, as described by Corollary~\ref{cor:tensor_rank_field_simple} (surpassing the $2^n - 1$ that comes from Ryser's formula~\eqref{eq:perm_ryser}). The following corollary clarifies slightly what this $2^n - n$ term formula for the determinant and permanent looks like in characteristic $2$. We note that the ``$\ominus$'' in the following corollary is the symmetric difference operation on sets, and $\pernF$ refers to the determinant tensor $\pernF = \sum_{\sigma \in S_n} \mathbf{e}_{\sigma(1)} \otimes \mathbf{e}_{\sigma(2)} \otimes \cdots \otimes \mathbf{e}_{\sigma(n)}$:

\begin{corollary}\label{cor:tensor_rank_permanent_char2}
    Let $A$ be an $n \times n$ matrix over a field $\F$ with characteristic $2$. Then
    \begin{align}\label{eq:main_formula_char2}
        \per(A) = \det(A) & = \sum_{S \subseteq [n]} \left( \prod_{i=1}^n \sum_{j \in S\ominus\{i\}} a_{i,j} \right).
    \end{align}
    In particular, whenever $S$ is a singleton the inner sum is empty, so $\rank(\pernF) = \rank(\detnF) \leq 2^n - n$.
\end{corollary}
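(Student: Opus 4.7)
My plan is to derive the identity directly from Theorem~\ref{thm:main_formula_nonzero} by specializing to $p=2$ and then reindexing the sum.

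First, I apply Theorem~\ref{thm:main_formula_nonzero} with $p=2$, which restricts the sum to partial partitions $P \in \PP(n)$ with $|P| \leq 1$. These come in exactly two flavors: the empty partial partition (with $|P|=0$), and partial partitions consisting of a single part $S \subseteq [n]$ with $|S| \geq 2$. For the empty partial partition, $\sgn(P) = 1$ and $|P|! = 1$, and every $i$ satisfies $i \underset{P}{\not\sim} i$, so the term is $\prod_{i=1}^n a_{i,i}$. For a singleton partial partition $P = \{S\}$ with $|S| \geq 2$, $\sgn(P) = (-1)^{|S|+1}$, which equals $1$ in characteristic $2$, and $|P|! = 1$.

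Next, for a single-part $P = \{S\}$ I rewrite the two cases of the inner product factor uniformly. If $i \in S$, then $\{j : j \underset{P}{\sim} i, j \neq i\} = S \setminus \{i\} = S \ominus \{i\}$. If $i \notin S$, then $i \underset{P}{\not\sim} i$ and the factor is $a_{i,i} + \sum_{j \in S} a_{i,j} = \sum_{j \in S \cup \{i\}} a_{i,j} = \sum_{j \in S \ominus \{i\}} a_{i,j}$. Thus in both cases the factor indexed by $i$ equals $\sum_{j \in S \ominus \{i\}} a_{i,j}$. The empty partial partition also fits this pattern by taking $S = \emptyset$, since $\emptyset \ominus \{i\} = \{i\}$ recovers $a_{i,i}$.

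Finally, I extend the sum to range over all $S \subseteq [n]$. The only subsets not yet accounted for are the $n$ singletons $S = \{i_0\}$; but for such an $S$, the factor corresponding to $i = i_0$ is $\sum_{j \in \emptyset} a_{i_0,j} = 0$, so these terms vanish and their inclusion is harmless. This yields exactly Equation~\eqref{eq:main_formula_char2}. The tensor rank bound $\rank(\pernF) = \rank(\detnF) \leq 2^n - n$ then follows by reading off, for each non-singleton $S$, the decomposition
\[
\prod_{i=1}^n \sum_{j \in S \ominus \{i\}} a_{i,j} \quad \longleftrightarrow \quad \bigotimes_{i=1}^n \sum_{j \in S \ominus \{i\}} \mathbf{e_j},
\]
as in the correspondence between formulas and tensor decompositions described in Section~\ref{sec:tensor}, giving $|\{S \subseteq [n] : |S| \neq 1\}| = 2^n - n$ rank-one terms.

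There is no real obstacle here: the argument is an almost mechanical simplification of Theorem~\ref{thm:main_formula_nonzero}. The only subtlety worth flagging is the uniform rewriting of the two cases via symmetric difference, which is what makes the final formula so compact.
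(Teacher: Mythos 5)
Your proof is correct and follows exactly the same route as the paper: specialize Theorem~\ref{thm:main_formula_nonzero} to $p=2$, restrict to partial partitions with at most one part, identify those with subsets $S \subseteq [n]$, and note that singleton $S$ contribute zero. The paper states this in one sentence; your version just spells out the case analysis and the symmetric-difference rewriting in more detail, which is entirely consistent.
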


The above corollary comes directly from plugging $p = 2$ into Theorem~\ref{thm:main_formula_nonzero} and discarding all terms coming from partial partitions that have two or more parts. The partial partitions with just zero or one part are in bijection with the subsets $S$ of $[n]$, giving the result.

For $n \leq 3$, the formula provided by Corollary~\ref{cor:tensor_rank_permanent_char2} is the same as the field-independent formula that we have seen already (except with signs ignored since $-1 = 1$ in characteristic $2$). The first non-trivial case comes when $n = 4$. In this case, the resulting formula has $12$ terms; it is the same as the formula displayed in Equation~\eqref{eq:det_n4_formula}, but without the final $3$ terms (i.e., the terms with a coefficient of $2$).

\section{Tensor Rank Lower bounds}\label{sec:lower_bound}

All of the results that we have presented so far bound $\rank(\detnF)$ and $\rank(\pernF)$ from above. In the other direction, the best known lower bounds on $\rank(\detnF)$ and $\rank(\pernF)$ when $n \geq 4$ are due to Derksen~\cite{Der15}, who showed that
\begin{align}\label{eq:derksen_lb}
    \rank(\detnF) \geq \binom{n}{\lfloor n/2 \rfloor} \quad \text{and} \quad \rank(\pernF) \geq \binom{n}{\lfloor n/2 \rfloor} 
\end{align}
(he only stated these bounds for $\F = \C$, but his proof works over any field).\footnote{There are some better lower bounds when $n \in \{5,7\}$ in \cite{KM21}.}

The known upper bounds show that this lower bound on the permanent cannot be improved by more than a factor of $\Theta(\sqrt{n})$ over any field. In particular, if $\F$ has characteristic not equal to $2$ then Glynn's formula~\eqref{eq:perm_glynn} implies
\[
    2^n\sqrt{\frac{2}{\pi n}} \thicksim \binom{n}{\lfloor n/2 \rfloor} \leq \rank(\pernF) \leq 2^{n-1}.
\]
Similarly, Corollary~\ref{cor:tensor_rank_permanent_char2} tells us that if $\F$ has characteristic $2$ then  we have
\[
    2^n\sqrt{\frac{2}{\pi n}} \thicksim \binom{n}{\lfloor n/2 \rfloor} \leq \rank(\detnF) = \rank(\pernF) \leq 2^n - n.
\]
We can, however, make some small improvements. In particular, we refine Derksen's analysis slightly to improve these lower bounds by $1$:

\begin{theorem}\label{thm:det_rank_lb}
    Let $\mathbb{F}$ be any field. Then $\displaystyle\rank(\detnF) \geq \binom{n}{\lfloor n/2\rfloor} + 1$ and $\displaystyle\rank(\pernF) \geq \binom{n}{\lfloor n/2\rfloor} + 1$.
\end{theorem}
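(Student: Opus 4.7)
The plan is to refine Derksen's flattening argument. Let $k = \lfloor n/2 \rfloor$ and $V = \F^n$, and view $\detnF \in V^{\otimes n}$ as the linear map $M \colon (V^*)^{\otimes(n-k)} \to V^{\otimes k}$ obtained by contracting against the last $n - k$ tensor factors. For each $k$-subset $S = \{s_1, \dots, s_k\} \subseteq [n]$, define the alternating tensor $u_S := \sum_{\alpha \in S_k} \sgn(\alpha)\, e_{s_{\alpha(1)}} \otimes \cdots \otimes e_{s_{\alpha(k)}} \in V^{\otimes k}$. A direct computation shows that $M(e_{j_1}^* \otimes \cdots \otimes e_{j_{n-k}}^*)$ equals (up to sign) $u_S$ for $S = [n] \setminus \{j_1,\dots,j_{n-k}\}$ when the $j_a$ are distinct and zero otherwise, so the image of $M$ is exactly the $\binom{n}{k}$-dimensional subspace $U_k := \operatorname{span}\{u_S\}_S$. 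Hence $M$ has matrix rank $\binom{n}{k}$, giving Derksen's bound $\rank(\detnF) \geq \binom{n}{k}$.

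To improve this by $1$, I would argue by contradiction. Suppose $\rank(\detnF) = \binom{n}{k}$, and fix a minimal decomposition $\detnF = \sum_{i=1}^{\binom{n}{k}} v_{1,i} \otimes \cdots \otimes v_{n,i}$. Flattening yields $M = \sum_i x_i \otimes y_i$ where each $x_i := v_{1,i} \otimes \cdots \otimes v_{k,i}$ is a simple tensor in $V^{\otimes k}$. Because $M$ has matrix rank exactly $\binom{n}{k}$ and we have that many rank-$1$ summands, standard linear algebra forces the $x_i$ to be linearly independent, so their span must contain (and hence equal) the column space $U_k$. In particular, each $x_i$ lies in $U_k$.

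The crux of the proof will be to show that, for $k \geq 2$ (i.e.\ in the relevant range $n \geq 4$), no nonzero simple tensor $v_1 \otimes \cdots \otimes v_k \in V^{\otimes k}$ can lie in $U_k$. When $\charc(\F) \neq 2$, the subspace $U_k$ consists of genuinely antisymmetric tensors, so swapping the first two tensor factors of such a simple tensor must multiply it by $-1$; comparing $v_1 \otimes v_2 \otimes v_3 \otimes \cdots \otimes v_k$ with $-v_2 \otimes v_1 \otimes v_3 \otimes \cdots \otimes v_k$ gives $(v_1 \otimes v_2 + v_2 \otimes v_1) \otimes v_3 \otimes \cdots \otimes v_k = 0$, and a short case split on whether $v_1, v_2$ are linearly dependent forces the whole tensor to vanish. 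When $\charc(\F) = 2$, the signs disappear and the $u_S$ become squarefree-symmetric tensors (invariant under $S_k$ and supported on tuples of distinct indices); any simple tensor lying in $U_k$ must therefore be symmetric, forcing $x_i = c\, v^{\otimes k}$ for some $v \in V$ and $c \in \F$, and the squarefree condition $c\, v_a^k = 0$ for each coordinate $a$ then forces $v = 0$ and so $x_i = 0$. In either case we contradict $\{x_i\}$ being a basis of the nonzero space $U_k$, proving $\rank(\detnF) \geq \binom{n}{k} + 1$.

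The argument for $\rank(\pernF) \geq \binom{n}{k} + 1$ is parallel: the flattening of $\pernF$ has image equal to the squarefree-symmetric subspace of $V^{\otimes k}$ (the sign-free analogue of $U_k$) in every characteristic, and the same ``simple $+$ symmetric $+$ squarefree $\Rightarrow 0$'' reasoning rules out a rank-$\binom{n}{k}$ decomposition. The part that will require the most care, in my view, is the characteristic-$2$ case for $\detnF$, where the antisymmetric and symmetric structures collapse onto one another and one has to be explicit about which property is doing the work; once that is sorted, the proofs for $\detnF$ and $\pernF$ become essentially identical.
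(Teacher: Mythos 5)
Your argument is correct and follows essentially the same path as the paper's proof: both flatten $\det^n_{\F}$ across the $\lfloor n/2\rfloor$-versus-$\lceil n/2\rceil$ split, identify the column space with the $\binom{n}{\lfloor n/2\rfloor}$-dimensional antisymmetric subspace, and observe that a decomposition with exactly $\binom{n}{\lfloor n/2\rfloor}$ summands would force each elementary tensor to lie in that subspace. The only difference is that you spell out the verification that no nonzero elementary tensor can be antisymmetric (handling characteristic $2$ separately via the squarefree condition), whereas the paper simply invokes this as a known fact.
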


\begin{proof}
    We begin by recalling the definition of the antisymmetric subspace $\mathcal{A}$ of $(\mathbb{F}^n)^{\otimes s}$:\footnote{\label{foot:f2}There are other definitions of the antisymmetric subspace (see \cite[Section~3.1.3]{JohALA}, for example) that are equivalent over fields of characteristic not equal to $2$, but in characteristic $2$ it is important that we choose this definition in this proof.}
    \begin{align}\label{eq:antisym_subspace}
        \mathcal{A} \defeq \mathrm{span}\left\{ \sum_{\sigma \in S_s} \sgn(\sigma) \mathbf{e}_{i_{\sigma(1)}} \otimes \mathbf{e}_{i_{\sigma(2)}} \otimes \cdots \otimes \mathbf{e}_{i_{\sigma(s)}} : 1 \leq i_1 < i_2 < \cdots < i_s \leq n \right\}.
    \end{align}
    
    We just prove the inequality in the statement of the theorem that involves $\rank(\detnF)$; the bound on $\rank(\pernF)$ follows similarly by just ignoring signs throughout the proof and omitting $\sgn(\sigma)$ from the definition~\eqref{eq:antisym_subspace} of the `antisymmetric subspace' $\mathcal{A}$.\footnote{For example, if $n = s = 2$ then this means that $\mathcal{A} = \mathrm{span}\{\mathbf{e_1} \otimes \mathbf{e_2} + \mathbf{e_2} \otimes \mathbf{e_1}\}$; it does \emph{not} mean that $\mathcal{A}$ is the symmetric subspace (it does not contain $\mathbf{e_1} \otimes \mathbf{e_1}$ or $\mathbf{e_2} \otimes \mathbf{e_2}$).}
    
    It is well-known that if $T$ is any matrix flattening of $\detnF$ then $\rank(\detnF) \geq \operatorname{rank}(T)$. We choose $T$ to be the flattening obtained by partitioning $(\mathbb{F}^n)^{\otimes n}$ as $(\mathbb{F}^n)^{\otimes \lfloor n/2\rfloor} \otimes (\mathbb{F}^n)^{\otimes \lceil n/2\rceil}$, which we think of as an $n^{\lfloor n/2\rfloor} \times n^{\lceil n/2\rceil}$ matrix. The columns of this matrix $T$ span the antisymmetric subspace $\mathcal{A}$ of $(\mathbb{F}^n)^{\otimes \lfloor n/2\rfloor}$, which has dimension $\binom{n}{\lfloor n/2 \rfloor}$, so
    \begin{align}\label{eq:in_proof_derksen_bound}
        \rank(\detnF) \geq \operatorname{rank}(T) = \binom{n}{\lfloor n/2 \rfloor},
    \end{align}
    recovering Derksen's lower bound.
    
    To show that Inequality~\eqref{eq:in_proof_derksen_bound} is actually strict (and thus complete the proof), note that for any tensor decomposition of the form
    \begin{align*}
        \detnF = \sum_{k=1}^r \mathbf{v_{1,k}} \otimes \mathbf{v_{2,k}} \otimes \cdots \otimes \mathbf{v_{n,k}},
    \end{align*}
    we have
    \begin{align}\label{eq:det_matrix_flattening}
        T = \sum_{k=1}^r \big(\mathbf{v_{1,k}} \otimes \cdots \otimes \mathbf{v_{\lfloor n/2\rfloor,k}}\big)\big(\mathbf{v_{\lfloor n/2\rfloor+1,k}} \otimes \cdots \otimes \mathbf{v_{n,k}}\big)^T.
    \end{align}
    
    In particular, this implies that the range of $T$ (i.e., the antisymmetric subspace of $(\mathbb{F}^n)^{\otimes \lfloor n/2\rfloor}$) is contained in the span of the $r$ elementary tensors $\mathbf{v_{1,k}} \otimes \cdots \otimes \mathbf{v_{\lfloor n/2\rfloor,k}}$ ($1 \leq k \leq r$). If $r \leq \binom{n}{\lfloor n/2 \rfloor}$ (the dimension of the antisymmetric subspace) then this implies that each of these elementary tensors are in the antisymmetric subspace, contradicting the fact that \emph{no} non-zero elementary tensors are in the antisymmetric subspace.\footnote{In the case of the permanent, we can see that $\mathcal{A}$ contains no non-zero elementary tensors $\mathbf{v} \otimes \cdots \otimes \mathbf{v}$ by noting that such a tensor has a non-zero ``diagonal'' entry (i.e., there exists some $i$ for which $(\mathbf{e_i} \otimes \cdots \otimes \mathbf{e_i})^*(\mathbf{v} \otimes \cdots \otimes \mathbf{v}) \neq 0$), but every member of the antisymmetric subspace $\mathcal{A}$ has all diagonal entries equal to $0$.} We thus conclude that $r > \binom{n}{\lfloor n/2 \rfloor}$, which completes the proof.
\end{proof}

\subsection{Better Lower Bounds Over Finite Fields}\label{sec:lower_bound_f2}

In the case when $\F = \F_q$ is the field with $q$ elements, we can refine the argument of Theorem~\ref{thm:det_rank_lb} even further to get even better lower bounds on $\rank(\det_{\F_q}^n)$. In particular, we have the following:

\begin{theorem}\label{thm:det_rank_lb_f2}
    Let $q \geq 2$ be a prime power, let $n \geq 5$, and let $x$ be the (unique) positive real solution to the equation $\log_q(x + 1) = x - \binom{n}{\lfloor n/2 \rfloor}$. Then
    \[
        \rank(\det_{\F_q}^n) \geq \lceil x \rceil.
    \]
    In particular, $\rank(\det_{\F_q}^n) \geq \binom{n}{\lfloor n/2 \rfloor} + \log_q\big(\binom{n}{\lfloor n/2 \rfloor}\big)$.
\end{theorem}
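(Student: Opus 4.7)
The plan is to refine the matrix-flattening argument from the proof of Theorem~\ref{thm:det_rank_lb}. I will fix a minimum-rank decomposition of $\det_{\F_q}^n$ of length $r$, flatten it into $T = \sum_{k=1}^r u_k w_k^T$ where each $u_k \in (\F_q^n)^{\otimes \lfloor n/2\rfloor}$ and $w_k \in (\F_q^n)^{\otimes \lceil n/2 \rceil}$ is an elementary tensor, and set $V := \operatorname{col}(T)$, which by the previous proof equals the antisymmetric subspace of dimension $d := \binom{n}{\lfloor n/2\rfloor}$, and $U := \operatorname{span}(u_1,\ldots,u_r) \supseteq V$. The earlier argument already shows that each elementary $u_k$ lies outside $V$, so every coset $u_k + V$ is nonzero in $U/V$.

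The conclusion of the theorem is equivalent to the inequality $q^{r - d} \geq r + 1$: indeed, the function $g(y) := q^{y - d} - (y + 1)$ is eventually strictly increasing and has a unique positive root $x$, so an integer $r$ satisfies $g(r) \geq 0$ precisely when $r \geq \lceil x \rceil$. The ``in particular'' bound $r \geq d + \log_q d$ then follows from the fact that $g(d + \log_q d) = d - (d + \log_q d + 1) = -\log_q d - 1 < 0$, which forces $x > d + \log_q d$. Since $\dim U \leq r$, it is enough to prove the stronger statement $|U/V| \geq r + 1$, or equivalently $q^{\dim U - d} \geq r + 1$.

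The core step I would attempt is to show that the $r$ cosets $\bar u_1, \ldots, \bar u_r$ in $U/V$ are pairwise distinct, which together with their being nonzero would yield at least $r + 1$ distinct elements of $U/V$. I would argue by contradiction: if $\bar u_i = \bar u_j$ for some $i \neq j$, then $v := u_i - u_j$ lies in $V \setminus \{0\}$, and since $V = \operatorname{col}(T)$ one has $v = \sum_k (w_k^T \mathbf{y}) u_k$ for any $\mathbf{y}$ with $T\mathbf{y} = v$. This produces a specific nontrivial linear dependence among the $u_k$ that is compatible with the factorization of $T$. Substituting this relation back into the decomposition and performing a coordinated exchange on both the $u_k$- and $w_k$-sides should collapse the $i$-th and $j$-th terms into strictly fewer elementary-tensor pieces, contradicting the minimality of $r$.

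The main obstacle is making this exchange argument rigorous. A direct substitution such as $u_i w_i^T + u_j w_j^T = u_j(w_i + w_j)^T + v w_i^T$ fails to reduce the rank because $v \in V$ is antisymmetric and therefore has tensor rank at least $2$ once $\lfloor n/2 \rfloor \geq 2$. For $n \geq 5$, however, the row space of $T$ is likewise an antisymmetric subspace (of $(\F_q^n)^{\otimes \lceil n/2 \rceil}$) with no nonzero elementary tensors, so the same obstruction can be exploited symmetrically: by locating a matching coset collision on the $w_k$-side and performing a coordinated double substitution, the antisymmetric remainders from the two sides should cancel, yielding an actual rank-$(r-1)$ elementary-tensor decomposition and closing the argument.
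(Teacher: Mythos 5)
Your setup is sound: the target inequality $q^{r-d} \geq r+1$ with $d = \binom{n}{\lfloor n/2\rfloor}$, its translation through the auxiliary function $g$, and the derivation of the ``in particular'' bound are all correct, and you correctly identify that the crux is showing that the $r$ elementary tensors on one side of the flattening give $r$ pairwise-distinct, nonzero cosets modulo the antisymmetric subspace. The gap lies in how you argue that distinctness. You flatten on the $\lfloor n/2\rfloor$ factor, where for $n = 5$ the antisymmetric subspace of $(\F_q^5)^{\otimes 2}$ contains elements of rank exactly $2$ (e.g.\ $\mathbf{e_1}\otimes\mathbf{e_2} - \mathbf{e_2}\otimes\mathbf{e_1}$), so a difference $u_i - u_j$ of two elementary tensors could a priori be a nonzero antisymmetric tensor and a rank count alone cannot rule out a coset collision. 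Your proposed remedy --- a ``coordinated double substitution'' cancelling antisymmetric remainders from the $u$- and $w$-sides --- is not an argument: there is no reason a collision on the $u$-side should be accompanied by one at matching indices on the $w$-side, and a term of the form $v\,w_i^T$ with $v$ nonzero antisymmetric (hence of rank at least $2$) cannot be absorbed into the remaining elementary-tensor terms to produce a shorter decomposition, so the appeal to minimality gives you nothing.

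The clean fix, and what the paper actually does, is to carry out the entire counting argument on the $s = \lceil n/2\rceil$ factor rather than the $\lfloor n/2\rfloor$ one, since $\lceil n/2\rceil \geq 3$ for all $n \geq 5$. Lemma~\ref{lem:antisymmetric_rank} then gives that every nonzero antisymmetric tensor in $(\F_q^n)^{\otimes \lceil n/2\rceil}$ has tensor rank at least $\lceil n/2\rceil \geq 3$; since each elementary tensor has rank $1$ and any difference of two of them has rank at most $2$, the quotient images are automatically nonzero and pairwise distinct, and the count $r \leq q^{r-d}-1$ follows with no contradiction argument or minimality hypothesis. You do mention the $\lceil n/2\rceil$-side antisymmetric subspace at the end of your last paragraph, but instead of running your counting on that side you try to combine both sides, which is precisely where the argument breaks down.
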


Before we can prove this, we begin by establishing a useful lemma about the tensor rank of members of the antisymmetric subspace:\footnote{See footnote~\ref{foot:f2}.}

\begin{lemma}\label{lem:antisymmetric_rank}
Suppose that $\mathbb{F}$ is a field, $n, s$ are positive integers, and $T$ is a nonzero element of the antisymmetric subspace of $(\mathbb{F}^n)^{\otimes s}$. Then $\rank(T) \geq s$.
\end{lemma}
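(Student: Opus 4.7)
The plan is to show that any nonzero antisymmetric $T$ lies in $U^{\otimes s}$ for some subspace $U \subseteq \F^n$ with $\dim U \leq \rank(T)$, and then to use the antisymmetry itself to force $\dim U \geq s$, yielding $\rank(T) \geq s$.

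For the first part, let $U_i \subseteq \F^n$ be the image of the $i$-th slot flattening of $T$ (i.e.\ the span of all vectors of the form $T(\cdot, \phi_2, \ldots, \phi_s)$ with the dot occurring in slot $i$ and each $\phi_j \in (\F^n)^*$). The matrix rank of this flattening is at most $\rank(T)$, so $\dim U_i \leq \rank(T)$. Antisymmetry (in characteristic $\neq 2$ the relation $\sigma \cdot T = \sgn(\sigma) T$; in characteristic $2$ the relation $\sigma \cdot T = T$, which follows from the symmetry of the basis tensors $\omega_I$ used in the paper's definition) guarantees that swapping slots $i$ and $j$ sends $T$ to $\pm T$, so the slot-$i$ and slot-$j$ flattenings have the same image: $U_1 = U_2 = \cdots = U_s =: U$. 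A routine computation in a basis of $\F^n$ extending a basis of $U$ then shows $T \in U^{\otimes s}$.

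For the second part, realise the antisymmetric subspace as the image of the alternation operator
\[
    \operatorname{Alt}(\mathbf{v_1} \otimes \cdots \otimes \mathbf{v_s}) := \sum_{\sigma \in S_s} \sgn(\sigma)\, \mathbf{v_{\sigma(1)}} \otimes \cdots \otimes \mathbf{v_{\sigma(s)}}.
\]
Any projection $\pi : \F^n \to U$ along a complementary subspace induces $\pi^{\otimes s}$ commuting with $\operatorname{Alt}$, which shows (since $\pi^{\otimes s}$ restricts to the identity on $U^{\otimes s}$) that $\operatorname{Alt}((\F^n)^{\otimes s}) \cap U^{\otimes s} = \operatorname{Alt}(U^{\otimes s})$. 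Hence $T \in \operatorname{Alt}(U^{\otimes s})$.

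It therefore suffices to prove that $\operatorname{Alt}(U^{\otimes s}) = 0$ whenever $\dim U < s$. Pick a basis $\mathbf{f_1}, \ldots, \mathbf{f_d}$ of $U$ with $d < s$. Every elementary basis tensor $\mathbf{f_{j_1}} \otimes \cdots \otimes \mathbf{f_{j_s}}$ of $U^{\otimes s}$ then has two equal factors $\mathbf{f_{j_a}} = \mathbf{f_{j_b}}$ with $a \neq b$, and pairing each $\sigma \in S_s$ with $\tau \sigma$ for $\tau = (a\ b)$ partitions $S_s$ into pairs of permutations whose two summands yield the \emph{same} permuted tensor (because $\mathbf{f_{j_a}} = \mathbf{f_{j_b}}$) with opposite signs in characteristic $\neq 2$ (so each pair contributes $0$), or with equal signs in characteristic $2$ (so each pair contributes $2\cdot(\cdot)=0$). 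The main obstacle is executing this last step cleanly in characteristic $2$, where the paper's nonstandard definition of the antisymmetric subspace might otherwise cause problems, but the pairing argument dispatches both cases simultaneously.
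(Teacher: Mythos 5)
Your proof is correct and takes a genuinely different route from the paper's. The paper's argument is pointwise and constructive: it chooses $\mathbf{x_1}, \ldots, \mathbf{x_s}$ with $T(\mathbf{x_1}, \ldots, \mathbf{x_s}) = 1$, shows these are linearly independent, and contracts $T$ against all but one of them to produce the dual-basis covectors $\mathbf{y_1}, \ldots, \mathbf{y_s}$, giving $s$ linearly independent elements in the image of a one-slot flattening and hence $\rank(T) \geq s$ directly. Your argument is structural: you identify the minimal subspace $U$ with $T \in U^{\otimes s}$ (the common slot image, well-defined because antisymmetry makes all slot images coincide), bound $\dim U \leq \rank(T)$ by the same flattening inequality, and then show $\operatorname{Alt}(U^{\otimes s}) = 0$ whenever $\dim U < s$, forcing $\dim U \geq s$. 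Both proofs ultimately bound the same flattening rank, but the intermediate machinery differs. The paper's route is more elementary and never needs the $\operatorname{Alt}$ operator, which is convenient given the paper's nonstandard characteristic-$2$-compatible definition of the antisymmetric subspace; yours is more conceptual and makes the ``support subspace'' of $T$ explicit, at the cost of having to identify that nonstandard subspace with $\operatorname{im}(\operatorname{Alt})$. That identification does hold in every characteristic---the only nontrivial part, that $\operatorname{Alt}$ annihilates a basis tensor with a repeated index, is precisely your closing pairing argument---but as written you invoke the identification before the step that actually justifies it, so a reader has to be slightly patient.
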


\begin{proof}
    If $s = 1$, the result is immediate, because every nonzero tensor has positive tensor rank. We shall henceforth assume that $s \geq 2$.
    
    We view the tensor $T$ as a multilinear form from $(\mathbb{F}^n)^{s}$ to $\mathbb{F}$. By assumption that it is nonzero, there must exist vectors $\mathbf{x_1}, \ldots, \mathbf{x_s} \in \mathbb{F}^n$ such that $T(\mathbf{x_1}, \ldots, \mathbf{x_s})$ is nonzero. By multiplying one of these vectors by an appropriate scalar, we can assume without loss of generality that $T(\mathbf{x_1}, \dots, \mathbf{x_s}) = 1$.
    
    Note that $\{\mathbf{x_1}, \dots, \mathbf{x_s}\}$ must be linearly independent. Otherwise, we could write one of the vectors as a linear combination of the others, which we will assume without loss of generality is $\mathbf{x_1}$. That is, $\mathbf{x_1} = \alpha_2 \mathbf{x_2} + \cdots + \alpha_s \mathbf{x_s}$, and then we would have the following by linearity in the first argument:
    \[
        T(\mathbf{x_1}, \mathbf{x_2}, \ldots, \mathbf{x_s}) = \sum_{i=2}^s \alpha_i T(\mathbf{x_i}, \mathbf{x_2}, \ldots, \mathbf{x_s}).
    \]
    By antisymmetry, each term on the right-hand side vanishes, and the left-hand side equals 1, so we obtain a contradiction. As such, it follows that $\{\mathbf{x_1}, \dots, \mathbf{x_s}\}$ is indeed linearly independent.

    For a fixed $1 \leq i \leq s$, we define a covector $\mathbf{y_i} : \mathbb{F}^n \rightarrow \mathbb{F}$ by contracting $T$ on its first $s - 1$ arguments with the elements of $\{\mathbf{x_1}, \dots, \mathbf{x_s}\} \setminus \{ \mathbf{x_i} \}$ and applying an appropriate sign change:
    
    \[ \mathbf{y_i}(\mathbf{v}) := (-1)^{s - i} T(\mathbf{x_1}, \dots, \mathbf{x_{i-1}}, \mathbf{x_{i+1}}, \dots, \mathbf{x_s}, \mathbf{v}) \]
    
    Observe that the antisymmetry properties of $T$ imply the following:

    \[
        \mathbf{y_i}(\mathbf{x_j}) = \begin{cases}
        1 & \textrm{ if $i = j$;} \\
        0 & \textrm{ if $i \neq j$.}
        \end{cases}
    \]
    Equivalently, if we consider the space $V$ spanned by $\mathbf{x_1}, \ldots, \mathbf{x_s}$, then the covectors $\mathbf{y_1}, \ldots, \mathbf{y_s}$ form a basis for $V^{\star}$, specifically the dual basis of $\mathbf{x_1}, \ldots, \mathbf{x_s}$.
    
    Consequently, the matrix obtained from $T$ by the flattening $(\mathbb{F}^n)^{\otimes s} = (\mathbb{F}^n)^{\otimes (s-1)} \otimes (\mathbb{F}^n)$ has tensor rank at least $s$, and thus $\rank(T) \geq s$.
\end{proof}

\begin{proof}[Proof of Theorem~\ref{thm:det_rank_lb_f2}]
    We showed in the proof of Theorem~\ref{thm:det_rank_lb} that if $1 \leq s \leq n$ and
    \[
        \det_{\F_q}^n = \sum_{k=1}^r \mathbf{v_{1,k}} \otimes \mathbf{v_{2,k}} \otimes \cdots \otimes \mathbf{v_{n,k}}
    \]
    then the antisymmetric subspace of $(\mathbb{F}_q^n)^{\otimes s}$ must be contained in the span of the $r$ elementary tensors $\mathbf{v_{1,k}} \otimes \cdots \otimes \mathbf{v_{s,k}}$ ($1 \leq k \leq r$). We will use this fact with $s = \lceil n/2 \rceil$ (in the proof of Theorem~\ref{thm:det_rank_lb} we instead used $s = \lfloor n/2\rfloor$).

    Let $P : (\F_q^n)^{\otimes \lceil n/2\rceil} \rightarrow (\F_q^n)^{\otimes \lceil n/2\rceil} / \mathcal{A}$ be a projection onto the quotient of the antisymmetric subspace $\mathcal{A}$ inside of $(\F_q^n)^{\otimes \lceil n/2\rceil}$. If the antisymmetric subspace of $(\mathbb{F}_q^n)^{\otimes \lceil n/2\rceil}$ is contained in the span of $r$ elementary tensors $\mathbf{v_{1,j}} \otimes \cdots \otimes \mathbf{v_{\lceil n/2\rceil,j}}$ ($1 \leq j \leq r$), then the span of the set
    \begin{align}\label{eq:F2_span}
        B := \big\{P(\mathbf{v_{1,j}} \otimes \cdots \otimes \mathbf{v_{\lceil n/2\rceil,j}}) : 1 \leq j \leq r \big\}
    \end{align}
    must have dimension at most $r - \binom{n}{\lfloor n/2 \rfloor}$. Since we are working over $\F_q$, there are thus at most
    \[
        q^{r - \binom{n}{\lfloor n/2 \rfloor}} - 1
    \]
    non-zero vectors in~$B$.
    
    Next, we note that there are exactly $r$ members in $B$ (i.e., the vectors $P(\mathbf{v_{1,i}} \otimes \cdots \otimes \mathbf{v_{\lceil n/2\rceil,i}})$ and $P(\mathbf{v_{1,j}} \otimes \cdots \otimes \mathbf{v_{\lceil n/2\rceil,j}})$ are distinct whenever $i \neq j$). To see why this is the case, we apply Lemma~\ref{lem:antisymmetric_rank}: given any tensor $T$ in the antisymmetric subspace of $(\mathbb{F}_q^n)^{\otimes \lceil n/2\rceil}$, we have $\rank(T) \geq \lceil n/2 \rceil \geq 3$, which implies that $P(\mathbf{v_{1,i}} \otimes \cdots \otimes \mathbf{v_{\lceil n/2\rceil,i}} - \mathbf{v_{1,j}} \otimes \cdots \otimes \mathbf{v_{\lceil n/2\rceil,j}}) = \mathbf{0}$ never occurs.

    It follows that the set~$B$ consists of exactly $r$ non-zero vectors, but also no more than $q^{r - \binom{n}{\lfloor n/2 \rfloor}} - 1$ non-zero vectors, so we must have $r \leq q^{r - \binom{n}{\lfloor n/2 \rfloor}} - 1$. Rearranging shows that $\log_q(r + 1) \leq r - \binom{n}{\lfloor n/2 \rfloor}$. Since the function
    \[
        f(x) := \log_q(x + 1) - x + \binom{n}{\lfloor n/2 \rfloor}
    \]
    is monotonically decreasing on $(0,\infty)$, has $f(1) > 0$, and $\lim_{x\rightarrow\infty}f(x) = -\infty$, we conclude that there is exactly one positive $\widetilde{x} \in \R$ for which $f(\widetilde{x}) = 0$. The least integer $r \geq 1$ for which $\log_q(r + 1) \leq r - \binom{n}{\lfloor n/2 \rfloor}$ is $r = \lceil \widetilde{x} \rceil$. The ``in particular'' statement of the theorem comes from the fact that $f(x) > 0$ when $x = \binom{n}{\lfloor n/2 \rfloor} + \log_q\big(\binom{n}{\lfloor n/2 \rfloor}\big)$ too.
\end{proof}

When $n = 4$, if we apply the same reasoning as in the proof of Theorem~\ref{thm:det_rank_lb_f2} then we have to be slightly careful, since in this case $\lceil n/2\rceil = 2$ so there are antisymmetric tensors in $(\F_q^n)^{\otimes \lceil n/2\rceil}$ with rank $2$, leading to fewer than $r$ members of the set~$B$ from Equation~\eqref{eq:F2_span}. Instead of the inequality $r \leq q^{r - \binom{n}{\lfloor n/2 \rfloor}} - 1$, we obtain the weaker inequality $r/2 \leq q^{r - \binom{n}{\lfloor n/2 \rfloor}} - 1$, which leads to the bound
\begin{align}\label{eq:D4_lb}
    \rank(\det_{\F_q}^4) \geq \begin{cases}9 & \textrm{ if $q = 2$;} \\ 8 & \textrm{ if $q \in \{3, 4\}$;} \\ 7 & \textrm{ if $q \geq 5$ is any other prime power.}\end{cases}
\end{align}

We can actually prove a much stronger lower bound in the $n = 4$, $q = 2$ case, which matches our upper bound from Theorem~\ref{thm:main_formula_nonzero}. In particular, we have the following theorem, whose proof is computer-assisted and described in the next subsection:

\begin{theorem}\label{thm:exactly_12}
    Over the field of two elements, the tensor rank of the determinant and permanent of a $4 \times 4$ matrix is exactly 12:
    \[
        \rank(\det_{\F_2}^4) = \rank(\per_{\F_2}^4) = 12.
    \]
\end{theorem}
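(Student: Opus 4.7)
The upper bound $\rank(\det_{\F_2}^4) \leq 12$ is immediate from Corollary~\ref{cor:tensor_rank_permanent_char2}, and $\rank(\per_{\F_2}^4) = \rank(\det_{\F_2}^4)$ since the permanent and determinant coincide in characteristic~$2$. All the work therefore rests on the matching lower bound $\rank(\det_{\F_2}^4) \geq 12$. The generic flattening arguments of Theorems~\ref{thm:det_rank_lb} and~\ref{thm:det_rank_lb_f2} give only $9$ (as recorded in Inequality~\eqref{eq:D4_lb}), so a substantially stronger argument is needed to rule out hypothetical decompositions of rank $9$, $10$, and $11$.

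My plan is to combine the substitution method with a computer-assisted exhaustive search, exploiting the rich symmetries of $\det_{\F_2}^4$ to make the enumeration tractable. Suppose for contradiction that $\det_{\F_2}^4 = \sum_{k=1}^{11} \mathbf{v_{1,k}} \otimes \mathbf{v_{2,k}} \otimes \mathbf{v_{3,k}} \otimes \mathbf{v_{4,k}}$. The determinant tensor is invariant under the diagonal action of $SL_4(\F_2)$ on each tensor slot (corresponding to row/column multiplication by unimodular matrices), the cyclic permutation of the four slots (via iterated cofactor expansion and transposition), and the simultaneous $S_4$ action on all slots via row and column permutations. I would use these actions to normalize any hypothetical rank-$11$ decomposition to a canonical form, fixing one rank-one summand and the orbit-representative of the remaining ten.

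After normalization, I would combine three kinds of constraints to prune the search: (i)~substituting linear functionals in one tensor slot yields $3$-tensors whose structure is constrained by the known identity $\rank(\det_{\F}^3) = 5$ and by bounds on related cofactor-like tensors; (ii)~the matrix ranks of every flattening of every partial sum of the putative decomposition must be consistent with those of $\det_{\F_2}^4$ itself (which has $(2,2)$-flattening rank~$6$ and $(1,3)$-flattening rank~$4$); and (iii)~the remaining configurations are verified by exhaustive enumeration over $\F_2$. The main obstacle is the combinatorial explosion: an unreduced search over roughly $\binom{15^4}{11} \approx 10^{43}$ candidate decompositions is hopelessly out of reach, so the proof's correctness depends on the symmetry quotient being sufficient and on the pruned search being verifiably complete rather than merely failing to produce a decomposition. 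An appealing alternative, which I do not presently see how to execute, would be to find a structural Young-flattening or slice-rank obstruction that detects the jump from $9$ directly up to $12$ over $\F_2$ without appealing to computation.
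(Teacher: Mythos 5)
Your proposal has the right overall shape---upper bound from Corollary~\ref{cor:tensor_rank_permanent_char2}, $\per=\det$ in characteristic~$2$, and a computer-assisted exhaustive search for the lower bound pruned by flattening and symmetry constraints---and you are candid that the search's feasibility is the open question. That candor is warranted: as sketched, the plan would not be executable, because it searches over full four-tensor decompositions and the symmetry group (roughly $|GL_4(\F_2)|^2\cdot|S_4|\approx 10^{10}$) is nowhere near large enough to tame $\sim 10^{43}$ candidates.

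The tractability trick in the paper's proof is one you gesture at in item~(ii) but do not exploit: fix the flattening $(\F_2^4)^{\otimes 4}=(\F_2^4)^{\otimes 2}\otimes(\F_2^4)^{\otimes 2}$ once and for all, so a rank-$r$ decomposition becomes $\det_4^{\F_2}=\sum_{i=1}^r A_i\otimes B_i$ with each $A_i,B_i$ a rank-one $4\times 4$ matrix over $\F_2$, and then search \emph{only} over the tuple $(A_1,\dots,A_r)$. There are just $225$ nonzero rank-one $4\times 4$ matrices over $\F_2$, the $A_i$ may be taken lexicographically ordered, a single change of basis $L\mapsto LA_iL^T$ normalizes $A_1$ to one of two forms, and---crucially---the span of $\{A_i\}$ must contain the $6$-dimensional antisymmetric subspace, so during backtracking a partial tuple $(A_1,\dots,A_s)$ can be discarded the moment $\dim\bigl(\operatorname{span}(A_1,\dots,A_s)\cap\Lambda\bigr)<6-r+s$. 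This collapses the search to something finishable (seconds at $r=9$, hundreds of core-hours at $r=11$), and in particular already rules out $r=9$ outright.

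The second thing you are missing is what to do once the search \emph{does} return candidate $A$-tuples, which happens for $r=10$ and $r=11$. Your item~(i) (substitution in one slot) is the right instinct, but the specific lemma you need is a contraction argument on the flattening: for any nonzero $\mathbf{u},\mathbf{v}\in\F_2^4$, in a minimal decomposition the set $\{i:\mathbf{u}^TA_i\mathbf{v}=1\}$ cannot be a singleton, since contracting the first factor by $\mathbf{u}\otimes\mathbf{v}$ would then express a rank-$0$ or rank-$2$ matrix as the single rank-one matrix $B_i$. This kills every $r=10$ candidate, and for $r=11$ a unique candidate survives, which a short by-hand contraction argument (looking at who covers the $(1,3)$ and $(2,3)$ entries and showing the forced $B_{11}$ would have to lie in two disjoint sets) then eliminates. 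Without an explicit elimination mechanism of this kind, your step~(iii) ``verified by exhaustive enumeration'' begs the question, since enumerating over the $B_i$ as well would reintroduce the blow-up you are trying to avoid.
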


\subsection{Proof That the Tensor Rank of \texorpdfstring{$\det_4^{\F_2}$}{det4 over F2} is 12}\label{sec:twelve_proof}

The formula in Theorem~\ref{thm:main_formula_nonzero} shows that the tensor rank is at most 12, and we have already shown a lower bound of 9. It suffices, therefore, to show that it is impossible to express the determinant tensor as the sum of $r \in \{ 9, 10, 11 \}$ rank-1 tensors.

Suppose otherwise. By considering the flattening $(\mathbb{F}_2^4)^{\otimes 4} = (\mathbb{F}_2^4)^{\otimes 2} \otimes (\mathbb{F}_2^4)^{\otimes 2}$, we can write
\[
    \det_4^{\F_2} = \sum_{i=1}^r A_i \otimes B_i
\]
where each $A_i$ and $B_i$ is a $4 \times 4$ rank-1 matrix. Recall that the span of $\{ A_i : 1 \leq i \leq r \}$ must contain the (6-dimensional) antisymmetric subspace of $(\mathbb{F}_2^4)^{\otimes 2}$.

Without loss of generality, we can assume that $A_1 \leq A_2 \leq \cdots \leq A_r$, where $\leq$ denotes the lexicographical order on the space of $4 \times 4$ matrices over $\F_2$. Moreover, given any such tensor decomposition of $\det_4^{\F_2}$ and an invertible matrix $L$, observe that the determinant is unaffected by the change of basis specified by $L$, so we also have
\[
    \det_4^{\F_2} = \sum_{i=1}^r L A_i L^T \otimes L B_i L^T.
\]

By applying a suitable change of basis $L$, we can without loss of generality assume that at least one of the matrices---necessarily the lexicographically first matrix, and therefore $A_1$ by definition---has a single $1$ in either the last or penultimate entry of the matrix, and zeroes in all other entries:

\[ A_1 = \begin{bmatrix}
0 & 0 & 0 & 0 \\
0 & 0 & 0 & 0 \\
0 & 0 & 0 & 0 \\
0 & 0 & 0 & 1 \end{bmatrix} \textrm{ or } \begin{bmatrix}
0 & 0 & 0 & 0 \\
0 & 0 & 0 & 0 \\
0 & 0 & 0 & 0 \\
0 & 0 & 1 & 0 \end{bmatrix} \]

We performed a backtracking search\footnote{Source code is available at: \url{https://gitlab.com/apgoucher/det4f2}} for candidate $r$-tuples $(A_1, \dots, A_r)$ of rank-1 matrices subject to these constraints. If the intersection of the 6-dimensional antisymmetric subspace and the span of the initial segment $(A_1, \dots, A_s)$ has rank less than $6 - r + s$, then it is impossible for the span of $(A_1, \dots, A_r)$ to contain the antisymmetric subspace, and therefore we can eliminate that entire branch of the search tree.

The largest search ($r = 11$) consumed 357 core-hours of CPU time, or 4 hours of wall-clock time when parallelised on an AWS r5a.24xlarge instance. The total CPU time grows rapidly as a function of $r$:

\begin{center}
\begin{tabular}{c | c}
$r$ & CPU time \\\hline
9 & 7 seconds \\
10 & 22 minutes \\
11 & 357 hours \\
\end{tabular}
\end{center}

It transpires that, when $r \geq 9$, there do exist $r$-tuples of rank-1 matrices $A_1, \dots, A_r$ which contain the antisymmetric subspace in their linear span, so additional ideas are necessary to eliminate these candidates.

\begin{lemma}
    Suppose that $\det_4^{\F_2} = \sum_{i=1}^r A_i \otimes B_i$ and $r$ is minimal. Let $\mathbf{u}, \mathbf{v} \in \F_2^4$ be a pair of (not necessarily distinct) nonzero vectors. Then the size of the set of indices $\{ i : \mathbf{u}^T A_i \mathbf{v} = 1 \}$ is not equal to 1.
\end{lemma}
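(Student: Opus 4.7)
The plan is to consider the $4 \times 4$ matrix obtained by contracting the flattened tensor equation $\det_4^{\F_2} = \sum_{i=1}^r A_i \otimes B_i$ with $\mathbf{u}$ and $\mathbf{v}$ in the first two tensor slots. Concretely, set
\[
    S := \sum_{i=1}^r (\mathbf{u}^T A_i \mathbf{v})\, B_i \;\in\; \F_2^{4 \times 4}.
\]
On one hand, $S$ is an $\F_2$-linear combination of the rank-$1$ matrices $B_i$. On the other hand, a routine check on rank-$1$ terms (using that $\det = \per$ over $\F_2$) identifies $S$ with the bilinear form
\[
    S(\mathbf{x}, \mathbf{y}) \;=\; \det\begin{pmatrix} \mathbf{u} \\ \mathbf{v} \\ \mathbf{x} \\ \mathbf{y} \end{pmatrix},
\]
which has entries $S_{k\ell} = u_i v_j + u_j v_i$ for $\{i, j\} = [4] \setminus \{k, \ell\}$ and $k \neq \ell$, and $S_{kk} = 0$.

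Suppose for contradiction that there is a unique index $i_0$ with $\mathbf{u}^T A_{i_0} \mathbf{v} = 1$. Then $S = B_{i_0}$, so $\operatorname{rank}(S) \leq 1$. The argument splits into two cases. If $\mathbf{u} = \mathbf{v}$, the matrix defining $S(\mathbf{x}, \mathbf{y})$ has two equal rows for every $\mathbf{x}, \mathbf{y}$, forcing $S \equiv 0$ and hence $B_{i_0} = 0$; but then the $i_0$-th rank-$1$ term of the decomposition vanishes entirely, contradicting the minimality of $r$. If $\mathbf{u} \neq \mathbf{v}$, then $\mathbf{u}, \mathbf{v}$ are linearly independent over $\F_2$, and I will show $\operatorname{rank}(S) = 2$, contradicting $\operatorname{rank}(B_{i_0}) \leq 1$.

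To establish the rank-$2$ claim, observe that $S$ is symmetric with zero diagonal, hence alternating over $\F_2$, so $\operatorname{rank}(S)$ is even. It therefore suffices to exhibit $\mathbf{x}, \mathbf{y} \in \F_2^4$ with $S(\mathbf{x}, \mathbf{y}) = 1$: since $\mathbf{u}, \mathbf{v}$ are linearly independent, they can be extended to a basis of $\F_2^4$ by some $\mathbf{x}, \mathbf{y}$, and the determinant of the resulting $4 \times 4$ matrix is $1$. The main conceptual step in the plan is the initial identification of the abstract contraction $\sum_i (\mathbf{u}^T A_i \mathbf{v})\, B_i$ with the concrete partial-determinant form $(\mathbf{x}, \mathbf{y}) \mapsto \det(\mathbf{u}, \mathbf{v}, \mathbf{x}, \mathbf{y})$; once this identification is verified on rank-$1$ terms, the case analysis and parity-of-rank argument are entirely mechanical.
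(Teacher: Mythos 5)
Your proof is correct and takes essentially the same approach as the paper: contract the tensor decomposition against $\mathbf{u} \otimes \mathbf{v}$ in the first two slots, conclude that the result equals the single rank-$\leq 1$ matrix $B_{i_0}$, and derive a contradiction in both the $\mathbf{u} = \mathbf{v}$ and $\mathbf{u} \neq \mathbf{v}$ cases. The only (cosmetic) difference is in the second case, where the paper changes basis so that $(\mathbf{u},\mathbf{v}) = (\mathbf{e_1},\mathbf{e_2})$ and reads off the rank-$2$ matrix $\mathbf{e_3}\otimes\mathbf{e_4} + \mathbf{e_4}\otimes\mathbf{e_3}$ directly, whereas you invoke the fact that a nonzero alternating form over $\F_2$ has even (hence $\geq 2$) rank.
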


\begin{proof}
    Suppose otherwise, namely that there is a unique index $i$ for which $\mathbf{u}^T A_i \mathbf{v} = 1$.
    
    If we contract both sides of the equation $\det_4^{\F_2} = \sum_{i=1}^r A_i \otimes B_i$ with $\mathbf{u}$ and $\mathbf{v}$ on the first tensor factor, all but the $i$-th term of the sum will vanish and we obtain (in index notation)
    \[
        u^j v^k \det_4^{j,k,\ell,m} = (B_i)^{\ell,m}.
    \]
    
    If $\mathbf{u} = \mathbf{v}$, then the left-hand-side is the all-zeroes matrix, and therefore so is $B_i$. But that means that we have a tensor decomposition of rank $r - 1$, contradicting minimality. Otherwise, $\mathbf{u} \neq \mathbf{v}$ and we can apply a suitable change of basis so that, without loss of generality, $\mathbf{u} = \mathbf{e_1}$ and $\mathbf{v} = \mathbf{e_2}$. Then the left-hand-side of the equation is the rank-2 matrix $\mathbf{e_3} \otimes \mathbf{e_4} + \mathbf{e_4} \otimes \mathbf{e_3}$, but the right-hand-side has rank~1, again obtaining a contradiction.
\end{proof}

This lemma eliminates all candidate 9-tuples and 10-tuples, establishing a tensor rank lower bound of 11, and leaves a single candidate 11-tuple up to change of basis and up to transposing the matrices $A_i$:

\begin{equation} \begin{gathered} A_1 = \begin{bmatrix}
0 & 0 & 0 & 0 \\
0 & 0 & 0 & 0 \\
0 & 0 & 0 & 0 \\
0 & 0 & 1 & 0 \end{bmatrix}, A_2 = \begin{bmatrix}
0 & 0 & 0 & 0 \\
0 & 0 & 0 & 0 \\
0 & 0 & 0 & 1 \\
0 & 0 & 0 & 0 \end{bmatrix}, A_3 = \begin{bmatrix}
0 & 0 & 0 & 0 \\
0 & 1 & 0 & 0 \\
0 & 1 & 0 & 0 \\
0 & 1 & 0 & 0 \end{bmatrix}, A_4 = \begin{bmatrix}
0 & 0 & 0 & 0 \\
0 & 1 & 0 & 1 \\
0 & 1 & 0 & 1 \\
0 & 0 & 0 & 0 \end{bmatrix}, \\
\textcolor{green3!75!black}{A_5 = \begin{bmatrix}
0 & 0 & 0 & 0 \\
0 & 1 & \textbf{1} & 0 \\
0 & 0 & 0 & 0 \\
0 & 1 & 1 & 0 \end{bmatrix}}, A_6 = \begin{bmatrix}
1 & 0 & 0 & 0 \\
0 & 0 & 0 & 0 \\
1 & 0 & 0 & 0 \\
1 & 0 & 0 & 0 \end{bmatrix}, A_7 = \begin{bmatrix}
1 & 0 & 0 & 1 \\
0 & 0 & 0 & 0 \\
1 & 0 & 0 & 1 \\
0 & 0 & 0 & 0 \end{bmatrix}, \textcolor{green3!75!black}{A_8 = \begin{bmatrix}
1 & 0 & \textbf{1} & 0 \\
0 & 0 & 0 & 0 \\
0 & 0 & 0 & 0 \\
1 & 0 & 1 & 0 \end{bmatrix}}, \\
A_9 = \begin{bmatrix}
1 & 1 & 0 & 0 \\
1 & 1 & 0 & 0 \\
1 & 1 & 0 & 0 \\
1 & 1 & 0 & 0 \end{bmatrix}, A_{10} = \begin{bmatrix}
1 & 1 & 0 & 1 \\
1 & 1 & 0 & 1 \\
1 & 1 & 0 & 1 \\
0 & 0 & 0 & 0 \end{bmatrix}, \quad \text{and} \quad \textcolor{green3!75!black}{A_{11} = \begin{bmatrix}
1 & 1 & \textbf{1} & 0 \\
1 & 1 & \textbf{1} & 0 \\
0 & 0 & 0 & 0 \\
1 & 1 & 1 & 0 \end{bmatrix}}. \end{gathered} \end{equation}

The only matrices in this set to have a nonzero $(1,3)$ entry are $A_8$ and $A_{11}$; the only matrices to have a nonzero $(2,3)$ entry are $A_5$ and $A_{11}$. By contracting both sides of the equation $\det_4^{\F_2} = \sum_{i=1}^r A_i \otimes B_i$ with $\mathbf{e_1} \otimes \mathbf{e_3}$, we get $B_8 + B_{11} = \mathbf{e_2} \otimes \mathbf{e_4} + \mathbf{e_4} \otimes \mathbf{e_2}$. Similarly, by contracting both sides of that equation by $\mathbf{e_2} \otimes \mathbf{e_3}$, we get $B_5 + B_{11} = \mathbf{e_1} \otimes \mathbf{e_4} + \mathbf{e_4} \otimes \mathbf{e_1}$.

There are only 6 different ways to write each of these rank-2 matrices as the sum of two rank-1 matrices. In particular, we must have
\[
    B_8, B_{11} \in \{ \mathbf{e_2} \otimes \mathbf{e_4}, \mathbf{e_4} \otimes \mathbf{e_2}, \mathbf{e_2} \otimes (\mathbf{e_2} + \mathbf{e_4}), (\mathbf{e_2} + \mathbf{e_4}) \otimes \mathbf{e_2}, (\mathbf{e_2} + \mathbf{e_4}) \otimes \mathbf{e_4}, \mathbf{e_4} \otimes (\mathbf{e_2} + \mathbf{e_4}) \}
\]
and
\[
    B_5, B_{11} \in \{ \mathbf{e_1} \otimes \mathbf{e_4}, \mathbf{e_4} \otimes \mathbf{e_1}, \mathbf{e_1} \otimes (\mathbf{e_1} + \mathbf{e_4}), (\mathbf{e_1} + \mathbf{e_4}) \otimes \mathbf{e_1}, (\mathbf{e_1} + \mathbf{e_4}) \otimes \mathbf{e_4}, \mathbf{e_4} \otimes (\mathbf{e_1} + \mathbf{e_4}) \}.
\]
Observe that these two sets are disjoint, so $B_{11}$ cannot be in both of them. This is a contradiction that eliminates this single candidate solution for $r = 11$, completing the proof of Theorem~\ref{thm:exactly_12}.\bigskip

\noindent \textbf{Acknowledgements.} Thanks to Dan Piker for many Twitter conversations and excellent graphics, one of which set the first author on the road that led to this paper. Thanks to Zach Teitler for clarifying some facts about Waring rank that appeared in Section~\ref{sec:waring} \cite{Tei22}. N.J.\ was supported by NSERC Discovery Grant RGPIN-2022-04098. Thanks also to an anonymous reviewer for numerous helpful corrections and suggestions.

\bibliographystyle{alpha}
\bibliography{bib}

\end{document}